\newcommand{\mn}{\bm{n}}
\newcommand{\mbt}{\bm{t}}
\newcommand{\mv}{\mathcal{V}}
\newcommand{\me}{\mathcal{E}}
\newcommand{\mf}{\mathcal{F}}
\newcommand{\ms}{\mathbb{S}}
\newcommand{\mt}{\mathbb{T}}
\newcommand{\mr}{\mathbb{R}}
\newcommand{\bsi}{\bm{\sigma}}
\newcommand{\bta}{\bm{\tau}}
\newcommand{\mI}{\bm{I}}
\theoremstyle{theorem}
\newtheorem{theorem}{Theorem}[section]
\newtheorem{prop}{Proposition}[section]
\newtheorem{lemma}{Lemma}[section]
\theoremstyle{remark}
\newtheorem{remark}{Remark}[section]
\numberwithin{equation}{section}
\numberwithin{table}{section}
\numberwithin{figure}{section}
\DeclareMathOperator{\dev}{dev}
\DeclareMathOperator{\hess}{hess}
\DeclareMathOperator{\sym}{sym}
\DeclareMathOperator{\curl}{curl}
\DeclareMathOperator{\divergence}{div}
\renewcommand{\div}{\divergence}
\DeclareMathOperator{\grad}{grad}
\DeclareMathOperator{\tr}{tr}
\DeclareMathOperator{\Tr}{Tr}
\DeclareMathOperator{\vskw}{vskw}
\DeclareMathOperator{\mskw}{mskw}
\DeclareMathOperator{\rot}{rot}
\DeclareMathOperator{\skw}{skw}
\newcommand{\ST}{{\mathbb S \cap \mathbb T}}
\newcommand{\B}{{\mathbb B}}
\title[Discretizing Conformal Hessian Complexes]{Discretizing linearized Einstein-Bianchi system by symmetric and traceless tensors}
\author{Yuyang Guo}
\address{School of Mathematical Sciences, Peking University, Beijing 100871, P.R. China.}
\email{yuyangguo@stu.pku.edu.cn}
\author{Jun Hu}
\address{LMAM and School of Mathematical Sciences, Peking University, Beijing 100871, P.R. China.}
\email{hujun@math.pku.edu.cn}
\author{Ting Lin}
\address{School of Mathematical Sciences, Peking University, Beijing 100871, P.R. China.}
\email{lintingsms@pku.edu.cn}
\thanks{Jun Hu was supported by NSFC project No.12288101. Ting Lin was supported by NSFC project No.123B2014.}
\begin{document}
\maketitle 
\begin{abstract}
The Einstein-Bianchi system uses symmetric and traceless tensors to reformulate Einstein's original field equations. However, preserving these algebraic constraints simultaneously remains a challenge for numerical methods. This paper proposes a new formulation that treats the linearized Einstein-Bianchi system (near the trivial Minkowski metric) as the Hodge wave equation associated with the conformal Hessian complex. To discretize this equation, a conforming finite element conformal Hessian complex that preserves symmetry and traceless-ness simultaneously is constructed on general three-dimensional tetrahedral grids, and its exactness is proven.
\end{abstract}

\section{Introduction}
Einstein's field equations are of fundamental importance in general relativity, as they describe the curvature of spacetime induced by mass and energy. Therefore, the system has received long-term attention from the fields of mathematics and physics over the past decades. However, their highly nonlinear nature poses significant challenges in obtaining either exact or numerical solutions. Among various formulations, the \emph{Einstein-Bianchi system} is based on the decomposition of the Weyl tensor and the Bianchi identity, which constitutes a class of hyperbolic partial differential equations.

The linearized Einstein-Bianchi system \cite{quenneville2015new} reads as follows:
\begin{equation}\label{eq:EB}
    \begin{aligned}
        \dot{\bm E}+\curl\bm B=0,\ \div\bm E=0,\\
        \dot{\bm B}-\curl\bm E=0,\ \div\bm B=0.
    \end{aligned}
\end{equation}
Here $\bm E$ and $\bm B$ are both symmetric and traceless matrix-valued functions in three dimensions. 

Despite the formal resemblance between this system and Maxwell's equations, the key distinction is that the variables $\bm E$ and $\bm B$ in \eqref{eq:EB} are not vector-valued functions but symmetric and traceless matrix-valued functions. 
As a consequence, the primary challenge of \eqref{eq:EB} lies in constructing discrete matrix-valued functions that adhere to algebraic constraints — namely, being symmetric and traceless simultaneously — while preserving the differential property of being divergence-free as time evolves, which are also fundamental properties of Einstein's equations. Therefore, the methods for solving Maxwell's equations cannot be directly applied to the linearized Einstein-Bianchi system. 

This paper addresses the challenge by providing a discretization that imposes both algebraic constraints strongly. By introducing an auxiliary variable $\sigma(t)=\int_{0}^t\div\div\bm E ds$, we obtain the following system:
\begin{equation}\label{eq:HodgeEB2}
\begin{aligned}
    \begin{cases}
        \dot{\sigma}=\div\div\bm{E},\\
        \dot{\bm{E}}=-\dev\hess\sigma-\sym\curl\bm{B},\\
        \dot{\bm{B}}=\sym\curl\bm{E},
    \end{cases}
\end{aligned}
\end{equation}
with the scalar-valued function $\sigma$ and symmetric and traceless matrix-valued functions $\bm E$ and $\bm B$. 
Here $\dev\hess\sigma$ is the traceless part of the Hessian of $\sigma$. Note that the $\curl$ of a symmetric matrix-valued function is traceless. Therefore, $\sym\curl\bm B$ is both symmetric and traceless, and the system is well-defined. This system seeks $\sigma \in C^1([0,T], L^2(\Omega))$, $\bm{E} \in C^0([0,T], H(\div\div, \Omega; \ST))$, and $\bm{B} \in C^0([0,T], H(\sym\curl, \Omega; \ST))$. Here $H(\div\div,\Omega;\ST)$ consists of $L^2$ matrix-valued functions $\bm \sigma$ with $\div\div \bm \sigma $ in $L^2$, and $H(\sym\curl, \Omega; \ST)$ consists of $L^2$ matrix-valued functions $\bta$ with $\sym\curl \bta$ in $L^2$. Note that symmetry and traceless-ness are strongly imposed in these spaces. 

The following theorem, to be verified in \Cref{subsec:cplx}, guarantees that the newly proposed system \eqref{eq:HodgeEB2} is equivalent to the linearized Einstein-Bianchi system \eqref{eq:EB}.
\begin{theorem}\label{thm:Hodge=EB}
    Given initial conditions $(\sigma_0,\bm{E}_0,\bm{B}_0)$ and appropriate boundary conditions, the system \eqref{eq:HodgeEB2} is well-posed. Suppose that $\sigma_0=0$ and $\bm E_0$ and $\bm B_0$ are divergence-free, then, for all time, $\sigma=0$ and $\bm E$ and $\bm B$ remain divergence-free. Consequently, \eqref{eq:HodgeEB2} is equivalent to the linearized Einstein-Bianchi system \eqref{eq:EB}.    
\end{theorem}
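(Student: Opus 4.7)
I would attack the theorem in three stages, corresponding to its three assertions. \emph{Well-posedness.} Rewrite \eqref{eq:HodgeEB2} as $\partial_t U = AU$ for $U=(\sigma,\bm E,\bm B)$ on the Hilbert space $\mathcal H := L^2(\Omega) \times L^2(\Omega;\ST) \times L^2(\Omega;\ST)$, where
\[
A = \begin{pmatrix} 0 & \div\div & 0 \\ -\dev\hess & 0 & -\sym\curl \\ 0 & \sym\curl & 0 \end{pmatrix}.
\]
A short integration-by-parts calculation shows that $(\div\div)^{\ast} = \dev\hess$ acting between scalars and $\ST$-valued fields (the tracelessness of the tensor argument kills the residual $\tfrac13\Delta\sigma$ contribution) and that $\sym\curl$ is symmetric on $L^2(\Omega;\ST)$ (the antisymmetric part of $\curl$ drops when paired against a symmetric test tensor). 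With the natural boundary conditions inherited from the conformal Hessian complex, $A^{\ast} = -A$, and Stone's theorem delivers a unitary $C^0$-group on $\mathcal H$, giving existence, uniqueness, and continuous dependence for data in $D(A)$.

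\emph{Constraint preservation.} The engine here is the identity
\[
\div(\sym\curl\bm\tau) = \tfrac12\,\curl(\div\bm\tau),
\]
valid for every symmetric tensor $\bm\tau$, which I would verify by a direct index calculation; an immediate corollary is $\div\div\,\sym\curl=0$, one of the consecutive compatibility relations of the conformal Hessian complex. Taking $\div\div$ of the $\bm E$-equation kills the $\sym\curl\bm B$ contribution, and combining with $\dot\sigma=\div\div\bm E$ yields the second-order Cauchy problem
\[
\ddot\sigma + (\div\div\,\dev\hess)\sigma = 0
\]
with data $\sigma_0=0$ and $\dot\sigma(0) = \div\div\bm E_0 = 0$ (using $\div\bm E_0=0$). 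Since $\div\div\,\dev\hess$ is non-negative self-adjoint under our boundary conditions, uniqueness forces $\sigma\equiv 0$. Substituting back, the remaining equations are $\dot{\bm E}=-\sym\curl\bm B$ and $\dot{\bm B}=\sym\curl\bm E$; applying $\div$ and invoking the displayed identity produces the Maxwell-type subsystem
\[
\partial_t(\div\bm E) = -\tfrac12\curl(\div\bm B), \qquad \partial_t(\div\bm B) = \tfrac12\curl(\div\bm E),
\]
whose $L^2$-energy $\|\div\bm E\|^2 + \|\div\bm B\|^2$ is conserved. Vanishing initial divergences therefore persist for all time.

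\emph{Equivalence and anticipated obstacle.} Once $\sigma\equiv 0$ and $\div\bm E=\div\bm B=0$ are established, \eqref{eq:HodgeEB2} reduces to $\dot{\bm E}=-\sym\curl\bm B$, $\dot{\bm B}=\sym\curl\bm E$; a brief index check shows that a symmetric traceless $\bm\tau$ with $\div\bm\tau=0$ has $\curl\bm\tau$ automatically symmetric and traceless, so $\sym\curl\bm\tau=\curl\bm\tau$ in this regime, and the reduced system is precisely \eqref{eq:EB}. The step I expect to consume most of the care is the \emph{boundary condition bookkeeping} tying the three stages together: skew-adjointness of $A$, the integration by parts used to close the Maxwell subsystem, and the admissible boundary data for \eqref{eq:EB} must all be mutually compatible. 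Fortunately, the trace structure of the conforming conformal Hessian complex is designed to encode exactly these compatibilities, so I would read the boundary conditions off the complex constructed in \Cref{subsec:cplx} and defer the routine matching to that section.
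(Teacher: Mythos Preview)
Your proposal is correct and takes a genuinely different route from the paper. The paper's proof is essentially a reduction to prior work: well-posedness is quoted from the abstract Hodge-wave framework of \cite{quenneville2015new} (which amounts to your Stone-theorem argument), and then, rather than proving constraint preservation directly for \eqref{eq:HodgeEB2}, the paper passes through the \emph{less constrained} Hodge wave equation \eqref{eq:HodgeEB1} of the $\div\div$ complex. For that system, \cite[Proposition~4.19]{quenneville2015new} already gives $\tilde\sigma=0$ and $\tilde{\bm E},\tilde{\bm B}$ symmetric, traceless, divergence-free for all time; but then $\hess\tilde\sigma=\dev\hess\tilde\sigma$ and $\curl\tilde{\bm E}=\sym\curl\tilde{\bm E}$, so $(\tilde\sigma,\tilde{\bm E},\tilde{\bm B})$ also solves \eqref{eq:HodgeEB2}, and uniqueness in \eqref{eq:HodgeEB2} forces it to coincide with $(\sigma,\bm E,\bm B)$. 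Your approach, by contrast, is self-contained: you derive the closed second-order equation for $\sigma$ and the Maxwell-type subsystem for $(\div\bm E,\div\bm B)$ directly from the identity $\div\sym\curl=\tfrac12\curl\div$ on symmetric tensors, and kill both by energy conservation. The paper's route is shorter because it outsources the substantive constraint-propagation argument to the cited $\div\div$-complex result; yours buys independence from that citation and makes the propagation mechanism explicit. One caution: the natural well-posedness class places $\bm E\in H(\div\div)$ and $\bm B\in H(\sym\curl)$, which does \emph{not} a priori put $\div\bm E,\div\bm B$ in $L^2$, so your Maxwell subsystem for the divergences is formal as written; you will need either to assume enough regularity on the data (the hypothesis ``$\div\bm E_0=0$'' already hints at $\bm E_0\in H(\div)$) or to pass through smooth solutions and argue by density.
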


Moreover, the system \eqref{eq:HodgeEB2} is the Hodge wave equation of the following conformal Hessian complex (see \cite[(52)]{Arnold2021} for the smooth version):
\begin{equation}\label{eq:complex3Dc}
\begin{aligned}
    P_{1}^{+}\xrightarrow[]{\subset}H^2(\Omega)& \xrightarrow[]{ \dev\hess}H(\sym\curl,\Omega;\ST)\\ &~~\xrightarrow[]{\sym\curl}H(\div\div,\Omega;\ST) \xrightarrow[]{ \div\div}L^2(\Omega) \rightarrow 0,
\end{aligned}
\end{equation}
where $P_{1}^{+}:=P_1\oplus{\rm {span}}\{\bm x^T\bm x\}$ denotes the kernel of the operator $\dev\hess$. 

The conformal Hessian complex, together with the Hodge wave equation \eqref{eq:HodgeEB2}, imposes both algebraic constraints of the linearized Einstein-Bianchi system. Therefore, for the linearized Einstein-Bianchi system, the conformal Hessian complex serves as the counterpart to the de Rham complex for Maxwell's equations. This indicates the importance of the discretization of the conformal Hessian complex. 

The key advantage of numerical methods based on the Hodge wave equation \eqref{eq:HodgeEB2} for the linearized Einstein-Bianchi system is that they preserve the algebraic constraints of symmetry and traceless-ness simultaneously. In contrast, the existing discretization methods for the linearized Einstein-Bianchi system are generally based on the Hodge wave equation that relaxes the algebraic constraints by merely enforcing symmetry of $\bm E$ or traceless-ness of $\bm B$. 
The associated complexes are the Hessian complex or the $\div\div$ complex, respectively \cite{Pauly2020,quenneville2015new}; see \cite{Hu2021a, hu2024family, Hu2022d, Chen2022a} for finite element discretizations, \cite{arf2021structure} for spline-based numerical methods, and \cite{hu2025distributional, Chen2025} for distribution-based discretizations. Still, none of them preserve both algebraic constraints.
 
The stable discretization of the Hodge wave equation \eqref{eq:HodgeEB2} concentrates on the construction of the finite element conformal Hessian complex. The main difficulty lies in the lack of a geometric basis of the five-dimensional space $\ST$ on tetrahedral or cuboid meshes (see \cite{MR4841479}), making characterizations of the key bubble function spaces elusive. In contrast, either $\mathbb{S}$ or $\mathbb{T}$ has a geometrically natural basis with respect to a tetrahedron \cite{Hu2015,Hu2021a}. 

\subsection*{Contributions}
One primary contribution of this paper is to reformulate the linearized Einstein-Bianchi system \eqref{eq:EB} as the Hodge wave equation associated with the conformal Hessian complex \eqref{eq:complex3Dc}. Such equivalence implies that the conformal Hessian complex serves as the analogue of the de Rham complex for Maxwell's equations. Another contribution is to discretize the conformal Hessian complex with conforming finite elements while preserving the cohomology. More precisely, we construct a family of finite element spaces $\Lambda_{k+1,h}\subset H(\sym\curl,\Omega;\ST)$ of polynomial degree $\le k+1$ and a family of finite element spaces $\Sigma_{k,h}\subset H(\div\div,\Omega;\ST)$ of polynomial degree $\le k$, such that the finite element conformal Hessian complex
\begin{align}\label{dct_cplx}
    P_{1}^{+} \xrightarrow[]{\subset}  U_{k+3,h} \xrightarrow[]{\dev\hess} \Lambda_{k+1,h} \xrightarrow[]{\sym\curl} \Sigma_{k,h} \xrightarrow[]{\div\div} P_{k-2}(\mathcal{T}_h)\xrightarrow[]{} 0
\end{align}
is exact together with a family of $H^2$ conforming finite element spaces \cite{Zhang2009} of polynomial degree $\le k+2$ and discontinuous piecewise polynomial spaces $P_{k-2}(\mathcal{T}_h)$ on tetrahedral grids $\mathcal{T}_h$ for $k\ge 6$. Here
$P_{1}^{+}:=P_1\oplus{\rm {span}}\{\bm x^T\bm x\}$ is the kernel of the operator $\dev\hess$. The error estimates of the scheme for the Hodge wave equation \eqref{eq:HodgeEB2}, based on the discrete conformal Hessian complex, are also analyzed. 

To address the challenge caused by the lack of a geometric basis of $\ST$, we give implicit characterizations of the $H(\div\div;\ST)$ bubble function space instead. We prove that the space spanned by the skew-symmetric part of the functions in the $H(\sym\curl;\mt)$ bubble function space is isomorphic to the $H(\div;\mr^3)$ bubble function space in \Cref{lem:vskw_symcurl}. Therefore, the $H(\div\div;\ST)$ bubble function space is the complement space of the $\sym\curl$ space of the $H(\sym\curl;\mt)$ bubble function space with respect to the $H(\div\div;\ms)$ bubble function space. Such a procedure can be regarded as a realization of the Belstein--Gelfand--Gelfand framework in \cite{Arnold2021}\cite{MR4700410}. As for the $H(\sym\curl;\ST)$ bubble function space, we manage to construct its explicit characterization in \Cref{lem:symcurl_basis}.

Besides, we derive the traces in two dimensions of the matrix-valued finite element spaces to ensure the conformity conditions, and we construct the trace complexes to analyze the regularity of the finite element functions on faces and edges. As a result, we obtain a thorough understanding of the $H(\sym\curl,\Omega;\ST)$ traces and the $H(\div\div,\Omega;\ST)$ traces, which are the generalization of the cases in \cite{Hu2022d}. 
The conformal strain complex in two dimensions is derived as one of the trace complexes, and its discretized version is used for the construction of the finite element conformal Hessian complex.

\enlargethispage{-4\baselineskip}
\subsection*{Notations}
Let $\mathcal{T}_h$ be a shape regular triangulation of $\Omega\subset\mr^3$ into tetrahedra. Let $\mv$ denote the set of all vertices, $\me$ the set of all edges and $\mf$ the set of all faces. Given $e\in\me$, let $\mbt$ denote the unit tangential vector along $e$, and let $\mn_1$ and $\mn_2$ denote two independent unit normal vectors such that $\mn_1\times \mn_2=\mbt$. Given $F\in\mf$, let $\mn$ denote the unit normal vector of $F$, and let $\mbt_1$ and $\mbt_2$ denote two independent unit tangential vectors on $F$ such that $\mbt_1\times\mbt_2=\mn$. Moreover, let $\mn_{\partial F}$ denote the outer normal vector of $\partial F$ on $F$ and $\mbt_{\partial F}$ denote the unit tangential vector of $\partial F$ such that $\mn_{\partial F}\times \mbt_{\partial F}=\mn$.  

Given $K\in\mathcal{T}_h$, let $\mf(K)$ be the set of all faces of $K$, $\me(K)$ be the set of all edges of $K$, $\mv(K)$ be the set of all vertices of $K$. Let $F_i$ denote the faces of $K$ and $\bm{x}_i$ denote the vertex opposite to $F_i$ with $0\leq i\leq 3$. Let $\lambda_i$ with $0\leq i\leq 3$ be the barycentric coordinates of $K$, and let $b_K=\lambda_0\lambda_1\lambda_2\lambda_3$. 
For the face $F$ opposite to the vertex $\bm x_i$, let $b_{F}=\lambda_j\lambda_l\lambda_m$, with $0\leq i\leq 3$ and $\{i,j,l,m\}$ being a permutation of $\{0,1,2,3\}$. 

Given a two-dimensional triangle $F\in\mf$, let $\me(F)$ be the set of all edges of $F$, $\mv(F)$ be the set of all vertices of $F$. Given $e \in \me(F)$, let $\mn=\mn_{\partial F}$ denote the unit normal vector of $e$ and $\mbt=\mbt_{\partial F}$ denote the unit tangential vector of $e$ in the two-dimensional plane. 

Throughout this paper, we denote the space of all symmetric $3\times 3$ matrices by $\ms$ and all traceless $3\times 3$ matrices by $\mt$. Given a domain $\Omega$, $L^2(\Omega; \mathbb{X})$ and $H^m(\Omega;\mathbb{X})$ represent the space $L^2$ and the Sobolev space respectively with values in $\mathbb{X}$. $P_k(\Omega ;\mathbb{X})$ denotes the set of polynomials of total degree not greater than $k$, with $\mathbb{X}$ being one of the following spaces: $\mr,\mr^3,\mr^{3\times 3}, \ms, \mt$ and $\ST$. If $\mathbb{X}=\mr$, then $L^2(\Omega)$ abbreviates $L^2(\Omega;\mathbb{X})$, similarly for $H^m(\Omega)$ and $P_k(\Omega)$.

The organization of this paper is as follows. \Cref{sec:cplx} introduces the conformal Hessian complex together with its traces. \Cref{sec:bubble} constructs the exact finite element conformal Hessian bubble complex. \Cref{sec:trace} introduces the finite element spaces corresponding to the trace complexes of the conformal Hessian complex. \Cref{sec:FEM3D} constructs a family of $H(\sym\curl;\ST)$ conforming finite element spaces and a family of $H(\div\div;\ST)$ conforming finite element spaces. Moreover, the constructed finite element conformal Hessian complex \eqref{dct_cplx} is shown to be exact provided that the domain is contractible. 
\Cref{sec:application} uses the proposed finite element spaces to discretize the Hodge wave equation \eqref{eq:HodgeEB2} and shows the error estimates. 

\section{The Conformal Hessian complex in three dimensions}\label{sec:cplx}
This section studies the properties of the conformal Hessian complex in three dimensions. In particular, we focus on the connection between its associated Hodge wave equation and the linearized Einstein-Bianchi system. We also derive the conformity conditions of each function space in the conformal Hessian complex and establish the trace complexes in two dimensions. The conformity conditions and the trace complexes will be used in the later construction of finite element complexes. To this end, we first define some operators in tensor calculus. We then introduce the conformal Hessian complex together with its associated Hodge wave equation, and conduct the analysis of its conformity conditions and trace complexes.

\subsection{Algebraic and differential operators}
This subsection reviews some basic notations including algebraic operators and differential operators for vectors and tensors in three dimensions. 

Let $\bm{I}$ denote the $n\times n$ identity matrix. Given a matrix $\bm{A}\in\mr^{n\times n}$, denote by
\begin{align*}
\sym\bm{A}=\frac{1}{2}(\bm{A}+\bm{A}^T),\;\skw\bm{A}=\frac{1}{2}(\bm{A}-\bm{A}^T),\;\dev \bm{A}=\bm{A}-\frac{1}{n} \Tr(\bm{A})\bm{I}
\end{align*}
the symmetric part, skew-symmetric part, traceless part of $\bm A$, respectively. 
For a vector $\bm{v}=(v_1,v_2,v_3)^T$, we define a skew-symmetric matrix as follows
\begin{align*}
\mskw \bm{v} = \begin{pmatrix}
0 &-v_3& v_2\\
v_3 &0& -v_1\\
-v_2&v_1&0 
\end{pmatrix}.
\end{align*}
Clearly, $\mskw$ is a bijection from the vector space to the skew-symmetric matrix space. 
For a matrix $\bm{A}\in\mr^{3\times 3}$, define 
\begin{align*}
    \vskw\bm{A}=\mskw^{-1}\skw\bm{A}.
\end{align*}

Next, we introduce some differential operators. 
For a matrix-valued function $\bm{A}$,
the $\curl$ and $\div$ operators apply row-wise to produce a matrix-valued function $\curl \bm{A}$ and a vector-valued function $\div \bm{A}$, respectively. 
For a vector-valued function $\bm{v} = (v_1,v_2,v_3)^T$ (treated as a column vector), the $\grad$ operator applies row-wise to produce a matrix-valued function 
\begin{align*}
\grad \bm{v}=\begin{pmatrix}
\partial_x v_1 &\partial_y v_1 &\partial_z v_1\\
\partial_x v_2 &\partial_y v_2 &\partial_z v_2\\
\partial_x v_3 &\partial_y v_3&\partial_z v_3 
\end{pmatrix}.
\end{align*}

For a plane $F$ in $\mr^3$ with the unit normal vector $\mn$ and the tangential vectors $\mbt_1,\mbt_2$, satisfying $\mbt_1\times\mbt_2=\mn$, suppose that the coordinate on $F$ is spanned by $\mbt_1,\mbt_2$. Define the projection operator $\Pi_F:\mr^3\xrightarrow{}\mr^2$ for a vector-valued function $\bm v$ as
\begin{equation*}
    \Pi_F \bm v=(\bm v\cdot\mbt_1,\bm v\cdot\mbt_2)^T,
\end{equation*}
and define the extension operator $E_F:\mr^2\xrightarrow{}\mr^3$ for a vector-valued function $\bm u=(u_1,u_2)^T$ as
\begin{equation*}
    E_F\bm u=u_1\mbt_1+u_2\mbt_2.
\end{equation*}
This allows us to define some differential operators on $F$. Define the surface gradient and the surface curl operator for a scalar-valued function $q$ as
\begin{align*}
\grad_F q  =\Pi_F\grad q,  \quad \curl_F q  =\Pi_F(\mn\times\grad q).
\end{align*}
Then \begin{equation*}
    (\grad_F q, \curl_F q)=0.
\end{equation*}

Define the surface rot operator for a vector-valued function $\bm{v}\in\mr^3$ as
\begin{align*}
\rot_F \bm{v} = (\mn\times\nabla)\cdot \bm{v}=\mn\cdot(\curl \bm{v}),
\end{align*}
and the surface divergence $\div_F$ as
\begin{align*}
\div_F \bm{v}=(\mn\times \nabla)\cdot(\mn\times \bm{v})=\rot_F(\mn\times \bm{v}).
\end{align*}
The surface rot and divergence operators for a vector-valued function $\bm u\in\mr^2$ are defined as
\begin{align*}
    \rot_F\bm u=\rot_F E_F\bm u,\; \div_F\bm u=\div_F E_F\bm u.
\end{align*}
The surface gradient $\grad_F$ applies row-wise on $\bm u\in\mr^2$ to produce a matrix-valued function $\grad_F \bm{u}$. 

In particular, suppose that $F$ is chosen as the $x-y$ plane with $\mn = (0,0,1)^T$. Then, these operators $\grad_F, \rot_F,$ $\curl_F,\div_F,\sym\grad_F$ are standard differential operators in two dimensions. Namely, for a scalar-valued function $q$ and a vector-valued function $\bm{u}=(u_1,u_2)^T$, the surface differential operators read as  $\grad_F q=(\partial_x q,\partial_y q)^T,\curl_F q=(-\partial_y q,\partial_x q)^T$, $\div_F\bm{u}=\partial_x u_1+\partial_y u_2$, $\rot_F \bm{u}=\partial_x u_2-\partial_y u_1$ and $\sym\grad_F \bm{u}=\frac{1}{2}(\grad_F\bm{u}+(\grad_F\bm{u})^T)$.

Given a matrix-valued function $\bm{A}$, the cross product of a vector from the left acts row-wise and from the right acts column-wise. The operator $\Pi_F$ from the left acts row-wise and from the right acts column-wise. The operators $\rot_F$ and $\div_F$ act row-wise.

We also present some important identities. Given a vector-valued function $\bm{v}$ and a matrix-valued function $\bm{A}$, the vector products commute with differentiation as follows:
\begin{align*}
 (\grad \bm{v})^T\mn=\grad(\bm{v}\cdot\mn),\\
 \grad \bm{v}\times \mn=\grad(\bm{v}\times \mn),\\
 (\curl \bm{A})^T\mn=\curl(\bm{A}^T\mn).
\end{align*}

\subsection{The Conformal Hessian complex}\label{subsec:cplx}
Recall that the conformal Hessian complex \eqref{eq:complex3Dc} in three dimensions from \cite{Arnold2021} is given as
\begin{align*}
    P_{1}^{+}\xrightarrow[]{\subset}H^2(\Omega) \xrightarrow[]{ \dev\hess}H(\sym\curl,\Omega;\ST) \xrightarrow[]{\sym\curl}H(\div\div,\Omega;\ST) \xrightarrow[]{ \div\div}L^2(\Omega) \rightarrow 0,
\end{align*}
where 
\begin{align*}
    H(\sym\curl,\Omega;\ST):&=\{\bta\in L^2(\Omega;\ST) \colon \sym\curl\bta\in L^2(\Omega;\ST)\},\\
    H(\div\div,\Omega;\ST):&=\{\bsi\in L^2(\Omega;\ST) \colon \div\div\bsi \in L^2(\Omega)\}.
\end{align*}

The $\div\div$ complex in three dimensions from \cite{Pauly2020} reads as:
\begin{equation}\label{eq:complex3Db}
    \begin{aligned}
    RT\xrightarrow[]{ \subset}H^1(\Omega;\mr^3) &\xrightarrow[]{ \dev\grad}H(\sym\curl,\Omega;\mt) \\&~~\xrightarrow[]{\sym\curl}H(\div\div,\Omega;\ms) \xrightarrow[]{\div\div}L^2(\Omega) \rightarrow 0.
\end{aligned}
\end{equation}
Here $
    RT:=\{\bm a+b\bm{x} \colon  \bm a\in\mr^3,b\in\mr\}
$ is the lowest order Raviart--Thomas shape function space in three dimensions, and 
\begin{align*}
    H(\sym\curl,\Omega;\mt):&=\{\bta\in L^2(\Omega;\mt) \colon \sym\curl\bta\in L^2(\Omega;\ms)\},\\
    H(\div\div,\Omega;\ms):&=\{\bsi\in L^2(\Omega;\ms) \colon \div\div\bsi \in L^2(\Omega)\}.
\end{align*}
Notice that the matrix-valued function spaces in both complexes are only different in their algebraic constraints, where the matrix-valued function spaces in the $\div\div$ complex are either symmetric or traceless, while those in the conformal Hessian complex are both symmetric and traceless. The conformal Hessian complex \eqref{eq:complex3Dc} is exact on a bounded Lipschitz contractible domain $\Omega$, given the exactness of the de Rham complex and the $\div\div$ complex \eqref{eq:complex3Db}, see \cite{Arnold2021}.

The Hodge wave equation \eqref{eq:HodgeEB2} associated with the conformal Hessian complex is critical for the construction of stable finite elements for the linearized Einstein-Bianchi system \eqref{eq:EB}. To illustrate this point, we first briefly review the existing approaches to \eqref{eq:EB}.

The formal resemblance between the linearized Einstein-Bianchi \eqref{eq:EB} system and Maxwell's equations motivates the reformulation of \eqref{eq:EB} as a Hodge wave equation as in \cite[Chapter 8.6]{MR3908678}. This enables the design of stable numerical discretizations. Specifically, the first such reformulation is based on the vector-valued de Rham complex \cite{quenneville2015new}:
\begin{equation}
\begin{aligned}
    P_1(\mr^2) \xrightarrow{\subset}H^1(\Omega;\mr^3)&\xrightarrow{\grad}H(\curl,\Omega;\mr^{3\times 3})\\&~~\xrightarrow{\curl}H(\div,\Omega;\mr^{3\times 3})\xrightarrow{}L^2(\Omega;\mr^3)\xrightarrow{}0,
\end{aligned}
\end{equation}
of which the Hodge wave equation reads:
\begin{equation}\label{eq:HodgeEB0}
\begin{aligned}
    \begin{cases}
        \dot{\sigma}=\div\bm{E},\\
        \dot{\bm{E}}=-\grad\sigma-\curl\bm{B},\\
        \dot{\bm{B}}=\curl\bm{E},
    \end{cases}
\end{aligned}
\end{equation}
with the scalar-valued function $\sigma$ and matrix-valued functions $\bm E$ and $\bm B$. 

Although this formulation is equivalent to the linearized Einstein-Bianchi system at the continuous level, it inherently fails to impose the algebraic constraints, since $\bm E$ and $\bm B$ are $\mr^{3\times 3}$ matrix-valued functions without built-in symmetric or traceless constraints. 

Such inability to intrinsically preserve these constraints necessitates extrinsic enforcement (i.e. imposing constraints via additional restrictions on discrete spaces), 
which, however, fundamentally conflicts with the stability requirements of the discretization. In fact, a stable finite element discretization must satisfy:
\begin{itemize}
    \item[-] Approximation property: The discrete spaces must uniformly approximate the continuous solution space.
    \item[-] Subcomplex property: The discrete spaces must form a subcomplex of the continuous complex.
    \item[-] Bounded cochain projections: There exist bounded projections from the continuous spaces to the discrete ones, commuting with the differential operators.
\end{itemize}
Extrinsic enforcement of algebraic constraints will violate the subcomplex property. More precisely, for a discrete space $V_h \subset H(\curl,\Omega;\ST)$, the image $\curl V_h$ of the $\curl$ operator is not necessarily a subspace of $H(\div,\Omega;\ST)$. This breaks the condition of the subcomplex property, thereby preventing the existence of bounded cochain projections. Consequently, any stable numerical discretization of \eqref{eq:HodgeEB0} cannot impose the algebraic constraints and will inevitably accumulate constraint violations over time. 

To address this deficiency, the Hessian complex \cite{quenneville2015new} 
\begin{equation}
    P_1\xrightarrow{\subset}H^2(\Omega)\xrightarrow{\hess}H(\curl,\Omega;\ms)\xrightarrow{\curl}H(\div,\Omega;\mt)\xrightarrow{\div}L^2(\Omega;\mr^3)\xrightarrow{}0
\end{equation}
and the $\div\div$ complex \eqref{eq:complex3Db} are proposed.  Both yield the following Hodge wave equation:
\begin{equation}\label{eq:HodgeEB1}
\begin{aligned}
    \begin{cases}
        \dot{\sigma}=\div\div\bm{E},\\
        \dot{\bm{E}}=-\grad\grad\sigma-\sym\curl\bm{B},\\
        \dot{\bm{B}}=\curl\bm{E},
    \end{cases}
\end{aligned}
\end{equation}
with the scalar-valued function $\sigma$, symmetric matrix-valued function $\bm E$ and traceless matrix-valued function $\bm B$.

Compared with \eqref{eq:HodgeEB0}, this system naturally enforces one of the two algebraic constraints (the symmetry of $\bm{E}$ and the traceless-ness of $\bm{B}$) while maintaining equivalence to \eqref{eq:EB}. Nevertheless,  simultaneous enforcement of symmetry and traceless-ness for both $\bm E$ and $\bm B$ remains unachievable due to the subcomplex property.

The conformal Hessian complex \eqref{eq:complex3Dc} is in some sense the most appropriate complex for the linearized Einstein-Bianchi system.
This complex uniquely preserves both symmetry and traceless-ness for $\bm{E}$ and $\bm{B}$ during evolution. \Cref{thm:Hodge=EB} rigorously confirms the well-posedness of the corresponding Hodge wave equation \eqref{eq:HodgeEB2} and its equivalence to the linearized Einstein-Bianchi system, establishing the framework where symmetry and traceless-ness conditions are strictly preserved throughout evolution. 

\begin{proof}[Proof of \Cref{thm:Hodge=EB}] 
    The well-posedness of the Hodge wave equation \eqref{eq:HodgeEB2} can be derived directly from the abstract framework of wave equations in \cite[Theorem 4.6]{quenneville2015new}. Then let $(\sigma,\bm E, \bm B)$ be the unique solution of \eqref{eq:HodgeEB2} with initial conditions $(\sigma_0,\bm E_0,\bm B_0)$. Given the equivalence between the linearized Einstein-Bianchi system \eqref{eq:EB} and the Hodge wave equation \eqref{eq:HodgeEB1} associated to the Hessian complex or the $\div\div$ complex \cite[Proposition 4.19]{quenneville2015new}, it suffices to show that $(\sigma,\bm E, \bm B)$ is also the solution to \eqref{eq:HodgeEB1}. Let $(\tilde{\sigma},\tilde{\bm E}, \tilde{\bm B})$ be the solution of \eqref{eq:HodgeEB1} with initial conditions $(\sigma_0,\bm E_0,\bm B_0)$ as well. Then we can obtain that $\tilde{\sigma}=0$ and $\tilde{\bm E}$ and $\tilde{\bm B}$ are symmetric, traceless and divergence-free for all time. This shows that
    \begin{equation*}
        \hess\tilde{\sigma}=\dev\hess\tilde{\sigma},\quad \curl\tilde{\bm E}=\sym\curl\tilde{\bm E}.
    \end{equation*}
    Hence, $(\tilde{\sigma},\tilde{\bm E}, \tilde{\bm B})$ is also the solution to \eqref{eq:HodgeEB2}. From the well-posedness of the Hodge wave equation \eqref{eq:HodgeEB2}, we can obtain that
     $(\sigma,\bm E, \bm B)=(\tilde{\sigma},\tilde{\bm E}, \tilde{\bm B})$. This concludes the proof.
\end{proof}

\subsection{Traces and trace complexes}
This subsection investigates the inter-element conformity conditions of the $H(\sym\curl;\ST)$ and $H(\div\div;\ST)$ conforming finite element spaces, deriving the traces (i.e. the restriction of functions on lower-dimensional sub-simplices) for each finite element space. We explore the relations between these traces—referred to as the trace complexes—and determine their Sobolev regularity. These results will be used for the subsequent construction of the finite element spaces in both two and three dimensions. Hereafter, we denote the space of symmetric matrices in two dimensions by $\ms_2$ and the space of traceless matrices in two dimensions by $\mt_2$. 

Recall that Green's identity for the $\div\div$ operator reads \cite{Chen2022a}: let $K$ be a tetrahedron of the mesh $\mathcal{T}_h$ and let $\bsi\in C^2(K;\ms)$ and $v\in H^2(K)$, and then we have
\begin{equation}\label{eq:Green_divdiv}
    \begin{aligned}
        (\div\div\bsi,v)_K=&(\bsi,\nabla^2 v)_K-\sum_{F\in\mf(K)}\sum_{e\in\me(F)}(\mn_{\partial F}^T\bsi\mn,v)_e\\
        &-\sum_{F\in\mf(K)}[(\mn^T\bsi\mn,\partial_{\mn}v)_F-(2\div_F(\bsi\mn)+\partial_{\mn}(\mn^T\bsi\mn),v)_F],
    \end{aligned}
\end{equation}
This identity holds for $\bsi\in C^2(K,\ST)$ as well. Hence, we can establish the sufficient $H(\div\div;\ST)$ conformity conditions in the following proposition:
\begin{prop}\label{prop:divdiv_conf}
    A piecewise smooth matrix-valued function $\bsi$ on a tetrahedral grid is in $H(\div\div,\Omega;\ST)$ if its face and edge traces satisfy the following continuity conditions : 
\begin{enumerate}
    \item[-] Across every interior face, both
    \begin{equation}
    \label{eq:tr_divdiv}
        \tr_{\div\div}(\bsi):=2\div_F(\bsi \mn)+\partial_{\mn}(\mn^T \bsi \mn)
    \end{equation} and $\bm n^T \bm \sigma \bm n$ are single-valued.
    \item[-] Across every edge, the components $\mn_i^T\bsi\mn_j$ (for $i,j=1,2$) are single-valued.
\end{enumerate}
\end{prop}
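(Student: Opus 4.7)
The plan is to verify that the distributional $\div\div\bsi$ coincides with the piecewise $\div\div$ computed element-by-element, which lies in $L^2(\Omega)$ by piecewise smoothness; since $\bsi\in L^2(\Omega;\ST)$ is automatic, this will establish $\bsi\in H(\div\div,\Omega;\ST)$. For an arbitrary test function $v\in C_c^\infty(\Omega)$, the definition of distributional $\div\div$ gives $\langle\div\div\bsi,v\rangle=\sum_{K\in\mathcal{T}_h}(\bsi,\nabla^2 v)_K$, and applying Green's identity \eqref{eq:Green_divdiv} on each $K$ will reduce the question to showing that the sum of the interior face and edge contributions vanishes.

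For the face contributions, I would pair the terms across each interior face $F=\partial K_1\cap\partial K_2$ with opposite outward normals $\mn$ and $-\mn$. A direct sign analysis shows that $\mn^T\bsi\mn$ is invariant under $\mn\mapsto-\mn$, while both $\partial_{\mn}v$ and $\tr_{\div\div}\bsi=2\div_F(\bsi\mn)+\partial_{\mn}(\mn^T\bsi\mn)$ change sign. Pairing the two contributions therefore collapses them into integrals proportional to the jumps of $\mn^T\bsi\mn$ and of $\tr_{\div\div}\bsi$ across $F$, both of which vanish by the face conformity hypothesis. Boundary face terms are killed by the compact support of $v$.

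For the edge contributions, I would group the terms $(\mn_{\partial F}^T\bsi\mn,v)_e$ by the interior face $F\supset e$. Since $\mn_{\partial F}$ is determined by $F$ and $e$ and is independent of the choice of element containing $F$, the two contributions from the elements $K_1,K_2$ sharing $F$ collapse into $(\mn_{\partial F}^T[\bsi|_{K_1}-\bsi|_{K_2}]\mn,v)_e$. Both $\mn$ and $\mn_{\partial F}$ lie in the two-dimensional normal plane of $e$ spanned by $\mn_1,\mn_2$, so $\mn_{\partial F}^T\bsi\mn$ is a linear combination of the four scalar components $\mn_i^T\bsi\mn_j$ for $i,j=1,2$. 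The edge conformity hypothesis then forces the jump to vanish on every interior edge, and compact support again handles edges contained in $\partial\Omega$.

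The principal obstacle will be the edge analysis: identifying the correct pairing (pairing by face rather than by walking around the edge) and observing that both the face normal $\mn$ and the outer co-normal $\mn_{\partial F}$ of the edge lie in the two-dimensional normal plane of $e$. This geometric observation is precisely what reduces the required continuity to the four scalar components $\mn_i^T\bsi\mn_j$ named in the hypothesis. Once it is in place, the rest is a routine distributional integration-by-parts argument.
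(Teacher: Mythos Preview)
Your proposal is correct and follows exactly the route the paper has in mind: the paper states the proposition as an immediate consequence of Green's identity \eqref{eq:Green_divdiv}, leaving the jump-cancellation argument implicit, and the subsequent remark confirms that the edge analysis is meant to proceed precisely as you describe (grouping contributions by face and reducing to the four normal-normal components on each edge). Your write-up simply fills in the details the paper omits.
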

\begin{remark}
    The second condition is sufficient but not necessary. The necessary requirement for the edge integrals in Green's identity \eqref{eq:Green_divdiv} (summed over all the elements) to vanish is the following weaker condition on each interior edge $e$:
    \begin{equation}\label{eq:edge_compat}
        \sum_{F\in\mf(e)}[\![\mn_{\partial F}^T\bsi\mn_F]\!]=0,
    \end{equation}
    where the sum is taken over all the faces $F$ that take $e$ as an edge. However, condition \eqref{eq:edge_compat} is not easy to implement.
\end{remark}

Similarly, let $\bta\in C^1(K;\ST)$ and $\bsi\in H^1(K;\ST)$ for a polyhedron $K$. Then from Green's identity for the $\sym\curl$ operator:
\begin{equation}\label{eq:Green_symcurl}
    (\sym\curl\bta,\bsi)_K=(\bta,\curl\bsi)_K+\sum_{F\in\mf(K)}(\sym(\mn\times\bta),\bsi)_F,
\end{equation}
we can conclude that a three-dimensional piecewise smooth matrix-valued function $\bta$ is in $H(\sym\curl,\Omega;\ST)$ if its face traces $\sym(\mn\times\bta)$ are single-valued across every interior face. This establishes the $H(\sym\curl;\ST)$ conformity conditions.

To use these conformity conditions to construct exact finite element complexes, we need to characterize the relations of these traces. Such a characterization will facilitate the subsequent construction of degrees of freedom of the corresponding finite element spaces of the complex.

Let $K$ be a tetrahedron of the mesh $\mathcal{T}_h$, and let $\bta$ and $\bsi$ be sufficiently smooth symmetric and traceless matrix-valued functions with $\bsi=\sym\curl\bta$. Substitute this into Green's identity for the $\div\div$ operator \eqref{eq:Green_divdiv}:
\begin{equation}\label{eq:Green_divdiv1}
    \begin{aligned}
        (\sym\curl\bta,\nabla^2 v)_K&-\sum_{F\in\mf(K)}\sum_{e\in\me(F)}(\mn_{\partial F}^T\bsi\mn,v)_e\\
        -\sum_{F\in\mf(K)}&(\mn^T\bsi\mn,\partial_{\mn}v)_F+\sum_{F\in\mf(K)}(\tr_{\div\div}\bsi,v)_F=0.
    \end{aligned}
\end{equation}
Next, apply Green's identity for the $\sym\curl$ operator \eqref{eq:Green_symcurl}for tetrahedron $K$:
\begin{equation}\label{eq:Green_symcurl1}
    \begin{aligned}
        &(\sym\curl\bta,\nabla^2 v)_K=(\bta,\curl\nabla^2 v)_K+\sum_{F\in\mf(K)}(\sym(\mn\times\bta),\nabla^2 v)_F \\
        =&\sum_{F\in\mf(K)}(\sym(\Pi_F(\bta\times\mn)\Pi_F),\curl_F\curl_F v)_F\\
        &-\sum_{F\in\mf(K)}(\Pi_F(\bta^T\mn),\curl_F(\partial_{\mn} v))_F.
    \end{aligned}
\end{equation}
Substitute \eqref{eq:Green_symcurl1} into \eqref{eq:Green_divdiv1}, and perform integration by parts for all the faces $F\in\mf(K)$:
\begin{equation*}
\begin{aligned}
    &\sum_{F\in\mf(K)}(\tr_{\div\div}\bsi+\rot_F\rot_F(\sym(\Pi_F(\bta\times\mn)\Pi_F)),v)_F\\
    -&\sum_{F\in\mf(K)}(\mn^T\bsi\mn-\rot_F(\Pi_F(\bta^T\mn)),\partial_{\mn} v)_{F}
    -\sum_{F\in\mf(K)}\sum_{e\in\me(F)} ((\mn^T\bta\mbt_{\partial_F},\partial_{\mn}v)_e+(\mn_{\partial F}^T\bsi\mn,v)_e)\\
    +&\sum_{F\in\mf(K)}\sum_{e\in\me(F)} ((\sym(\Pi_F(\bta\times\mn)\Pi_F)\mbt_{\partial_F},\curl_F v)_e-(\mbt_{\partial F}^T\rot_F(\sym(\Pi_F(\bta\times\mn)\Pi_F)),v)_e)=0.
\end{aligned}
\end{equation*}
With $v$ being an arbitrary smooth function, the above identity yields the following necessary compatibility conditions on each face $F\in\mf(K)$:
\begin{equation}\label{eq:facetrace1}
    \tr_{\div\div}(\bsi)=-\rot_F\rot_F(\sym(\Pi_F(\bta\times\mn))\Pi_F),\quad         \mn^T\bsi\mn=\rot_F(\Pi_F(\bta^T\mn)).
\end{equation}

Note that the quantities $\sym(\Pi_F(\bta\times\mn)\Pi_F)$ and $\Pi_F(\bta^T\mn)$ represent, respectively,  the tangential-tangential components and tangential-normal components of the $H(\sym\curl;\ST)$ traces $\sym(\bta\times\mn)$. (Because $\sym(\bta\times\mn)$ is symmetric and traceless, its normal-normal component is determined by its tangential-tangential components.) Consequently, the conditions \eqref{eq:facetrace1} state precisely how the $H\div\div;\ST)$ traces relate to the $H(\sym\curl;\ST)$ traces. A similar result appears in \cite[Lemma 4.3]{Hu2022d}, where it is obtained by direct computation. Here, we provide a unified detailed derivation.

Following a similar procedure, we can obtain the compatibility conditions between the $H(\sym\curl;\ST)$ traces and the $H^2$ traces: for $\bta=\dev\hess u$,
\begin{equation}\label{eq:facetrace2}
    \sym(\Pi_F(\bta\times\mn)\Pi_F)=-\sym\grad_F\curl_F u,\quad \Pi_F(\bta^T\mn)=\grad_F(\partial_{\mn}u).
\end{equation}
Note that the operator $\sym\grad_F\curl_F$ maps a scalar-valued function to a symmetric and traceless tensor-valued function, since $\Tr(\sym\grad_F\curl_F u)=\div_F\curl_F u=0$. 

From the relations \eqref{eq:facetrace1} and \eqref{eq:facetrace2}, we can see that the relevant differential operators not only commute with the trace maps (i.e. the projections from three‑dimensional tetrahedron to two‑dimensional faces), but also induce complexes on each face of tetrahedron $K$—so‑called trace complexes. The next lemma summarizes these results in the form of two commutative diagrams.
\begin{lemma}\label{lem:face}
    The following two diagrams commute:
    \begin{equation}\label{eq:diagrama}
    \begin{tikzcd}[column sep=huge]
        u \dar[] \rar["\dev\hess "] & \bta \dar[] \rar["\sym\curl"] & \bsi \dar[] \\
        u \rar[" ~~~~-\sym\grad_F\curl_F "] & \sym(\Pi_F(\bta\times\mn)\Pi_F) \rar[" -\rot_F\rot_F  "] & \tr_{\div\div}(\bsi),
    \end{tikzcd}
    \end{equation}
    which leads to the two-dimensional conformal strain complex \eqref{eq:complex2Db} below, and 
    \begin{equation}\label{eq:diagramb}
    \begin{tikzcd}
        u \dar[] \rar["\dev\hess "] & \bta \dar[] \rar["\sym\curl"] & \bsi \dar[] \\
        \partial_{\mn}u \rar["\grad_F"] & \Pi_F(\bta^T\mn) \rar["\rot_F"] & \mn^T\bsi\mn,
    \end{tikzcd}
    \end{equation}
    which leads to the two-dimensional de Rham complex \eqref{eq:complex2Da} below.
    Note that $\tr_{\div\div}$ is defined in \eqref{eq:tr_divdiv}.
\end{lemma}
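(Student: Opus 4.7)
The plan is to reduce the commutativity of the two diagrams to the pairs of identities \eqref{eq:facetrace1} and \eqref{eq:facetrace2}, after which the complex property of the bottom rows will follow essentially for free by diagram chasing.

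First, I observe that the right squares of both diagrams are precisely the identities in \eqref{eq:facetrace1}. These have already been derived above by substituting $\bsi=\sym\curl\bta$ into Green's identity \eqref{eq:Green_divdiv} for the $\div\div$ operator, rewriting $(\sym\curl\bta,\nabla^2 v)_K$ via Green's identity \eqref{eq:Green_symcurl} for $\sym\curl$, performing integration by parts on each face, and extracting face-by-face conditions from the arbitrariness of the smooth test function $v$. No further work is needed for the right squares.

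Next, the left squares correspond to the identities in \eqref{eq:facetrace2}, which I plan to verify by direct tensor calculus on each face. For the tangential-normal component, since $\hess u$ is symmetric we have $(\hess u)\mn=\grad(\partial_{\mn}u)$, so $(\dev\hess u)^T\mn=\grad(\partial_{\mn}u)-\tfrac{1}{3}(\Delta u)\mn$; applying $\Pi_F$ annihilates the normal correction via $\Pi_F\mn=0$ and yields $\grad_F(\partial_{\mn}u)$. For the tangential-tangential component, I would decompose $\hess u$ into blocks adapted to the local orthonormal frame $(\mbt_1,\mbt_2,\mn)$, use $\Pi_F\mn=0$ to eliminate the deviator correction once more, and identify the $2\times 2$ symmetric and traceless block that remains with $-\sym\grad_F\curl_F u$ by a planar manipulation. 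Alternatively, one could mimic the Green's identity argument that produced \eqref{eq:facetrace1} by substituting $\bta=\dev\hess u$ into \eqref{eq:Green_symcurl} and testing against a smooth surface gradient.

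Once commutativity is in hand, the complex property of the bottom rows follows by chasing the diagrams with $\sym\curl\circ\dev\hess=0$ from the conformal Hessian complex: in diagram (a), $(-\rot_F\rot_F)\circ(-\sym\grad_F\curl_F)\,u=\tr_{\div\div}(\sym\curl\dev\hess u)=0$ on each face, and in diagram (b), $\rot_F\grad_F(\partial_{\mn}u)=\mn^T(\sym\curl\dev\hess u)\mn=0$ (the latter also following directly from the elementary identity $\rot_F\grad_F=0$). The main obstacle will be the tensor bookkeeping in the tangential-tangential identity of \eqref{eq:facetrace2}: tracking the row-wise versus column-wise actions of the cross product with $\mn$, the tangential projections $\Pi_F(\cdot)\Pi_F$, and the deviator correction. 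This parallels the derivation of \eqref{eq:facetrace1}, however, so the same pattern should carry through cleanly.
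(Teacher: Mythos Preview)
Your proposal is correct and aligns with the paper's treatment: the lemma is a summary of the identities \eqref{eq:facetrace1} and \eqref{eq:facetrace2} already derived in the preceding text, and the diagram chase for the complex property is exactly as you describe. The one methodological difference is that the paper obtains \eqref{eq:facetrace2} by the same Green's-identity procedure used for \eqref{eq:facetrace1} (the route you mention as an alternative), rather than by the direct tensor computation you propose as primary; both are valid, with the Green's-identity route being more uniform and the direct computation more elementary.

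One small correction to your direct computation for the tangential--tangential identity: the deviator correction $-\tfrac{1}{3}(\Delta u)\,\mI$ is \emph{not} eliminated by $\Pi_F\mn=0$. Since $(\mI\times\mn)\bm v=\bm v\times\mn$, one finds
\[
\Pi_F(\mI\times\mn)\Pi_F=\begin{pmatrix}0&1\\-1&0\end{pmatrix},
\]
which is nonzero but skew-symmetric; it is the outer $\sym$ operator that annihilates it. This does not affect the validity of your argument, only the stated mechanism for that step.
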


We need to characterize the function spaces of the traces on face $F$ in both \eqref{eq:diagrama} and \eqref{eq:diagramb}.

Since a three‑dimensional $H^2$ conforming finite element function $u$ is $C^4$-continuous across each vertex and $C^2$-continuous across each edge \cite{hu2023construction}, then, on any face $F$, the trace $u|_F$ belongs to $H^3(F)$ and $\partial_{\mn} u|_F$ belongs to $H^2(F)$. Meanwhile, the $H(\div\div;\ST)$ conformity conditions in \Cref{prop:divdiv_conf} ensure that the normal–normal components $\mn_i^T\bsi\mn_j\ (i,j=1,2)$ are continuous across edges, so $\mn^T\bsi\mn|_F\in H^1(F)$. Hence, from the bottom rows of the diagrams \eqref{eq:diagrama} and \eqref{eq:diagramb}  we can recover the following two-dimensional conformal Hessian complex on $F$:
\begin{align}\label{eq:complex2Db}
    P_{1}^{+}\xrightarrow[]{\subset}H^3(F) \xrightarrow[]{\sym\grad_F\curl_F}H^1(\rot_F\rot_F,F;\ms_2 \cap \mt_2) \xrightarrow[]{\rot_F\rot_F} L^2(F) \rightarrow 0,
\end{align}
and the two-dimensional de Rham complex:
\begin{align}\label{eq:complex2Da}
    \mr\xrightarrow[]{\subset}H^2(F) \xrightarrow[]{ \grad_F}H^1(\rot_F,F;\mr^2) \xrightarrow[]{\rot_F} H^1(F) \rightarrow 0.
\end{align}
Here, $P_{1}^{+} = P_1 \oplus \operatorname{span}\{\bm x^T\bm x\}$ is the kernel of $\sym\grad_F \curl_F$. The matrix-valued space $H^1(\rot_F\rot_F,F;\ms_2 \cap \mt_2)$ is defined as 
\begin{align*}
H^1(\rot_F\rot_F,F;\ms_2\cap\mt_2):=\{\bta\in H^1(F;\ms_2\cap\mt_2)  \colon \rot_F\rot_F\bta\in L^2(F)\},
\end{align*}
and the vector-valued space $H^1(\rot_F,F;\mr^2)$ is defined as 
\begin{align*}
H^1(\rot_F,F;\mr^2):=\{\bta\in H^1(F;\mr^2)  \colon \rot_F\bta\in H^1(F)\}.
\end{align*}
This is, for matrix-valued function $\bta$,
\begin{equation*}
    \sym(\Pi_F(\bta\times\mn)\Pi_F)\in H^1(\rot_F\rot_F,F;\ms_2\cap\mt_2),\quad \Pi_F(\bta^T\mn)\in H^1(\rot_F,F;\mr^2).
\end{equation*}

Both trace complexes \eqref{eq:complex2Db} and \eqref{eq:complex2Da} are exact on a bounded Lipschitz contractible domain. In particular, the conformal strain complex \eqref{eq:complex2Db} is a rotated version of the conformal Hessian complex in two dimensions, and can be derived from the Bernstein-Gelfand-Gelfand machinery \cite{Arnold2021}: 
\begin{equation}\label{eq:defcurlBGG}
    \begin{tikzcd}
    &H^3(F) \arrow{r}{\curl_F} &H^2(F;\mr^2) \arrow{r}{\div_F} &H^1(F)\\
    &H^2(F;\mr^2) \arrow{r}{\sym\grad_F} \arrow[ur, "\mathrm{id}"]&H^1(\rot_F\rot_F,F;\ms_2) \arrow{r}{\rot_F\rot_F} \arrow[ur, "\Tr"] &L^2(F) .
    \end{tikzcd}
\end{equation}
The exactness of the conformal strain complex then follows directly from the diagram chase.

The conforming discretization of the trace complexes \eqref{eq:complex2Da} and \eqref{eq:complex2Db} is constructed in \Cref{sec:trace}, preserving the exactness. These two-dimensional discrete complexes will facilitate
the discretization of the three-dimensional conformal Hessian complex by determining the appropriate degrees of freedom on edges and faces.

\begin{remark} 
    The following conformal strain complex starting from $H^2(F)$ has lower regularity compared with \eqref{eq:complex2Db}:
    \begin{align}\label{eq:complex2Dc}
        P_{1}^{+}\xrightarrow[]{\subset}H^2(F) \xrightarrow[]{\sym\grad_F\curl_F}H(\rot_F\rot_F,F;\ms_2 \cap \mt_2) \xrightarrow[]{\rot_F\rot_F} L^2(F) \rightarrow 0.
    \end{align}
    Here
    \begin{align*}
        H(\rot_F\rot_F,F;\ms_2\cap\mt_2):=\{\bta\in L^2(F;\ms_2\cap\mt_2)  \colon \rot_F\rot_F\bta\in L^2(F)\}.
    \end{align*}
    \eqref{eq:complex2Dc} is also exact on a bounded Lipschitz contractible domain. The finite element conformal strain complex proposed in \Cref{subsec:conformal_strain} is a subcomplex of both \eqref{eq:complex2Db} and \eqref{eq:complex2Dc}. 
\end{remark}

\section{Bubble complexes in three dimensions}\label{sec:bubble}

This section constructs the bubble complex corresponding to the conformal Hessian complex. Building on the bubble complex, we establish its exactness and derive characterizations of the associated $H(\sym\curl;\ST)$ and $H(\div\div;\ST)$ bubble function spaces. These results will subsequently be employed to define the degrees of freedom for the finite element spaces of the conformal Hessian complex and to prove the exactness of the discrete conformal Hessian complex.

The construction of the conformal Hessian bubble complex relies on three key components: (1) the de Rham bubble complex in \eqref{dct_deRham} below and the $\div\div$ bubble complex in \eqref{dct_divdivS+} below with enhanced smoothness, (2) the characterization of the space spanned by the skew-symmetric part of the functions in the $H(\sym\curl;\mt)$ bubble function space, (3) the characterization of the space spanned by the trace of the functions in the $H(\div\div;\ms)$ bubble function space. Note that here the term “trace” refers to the matrix trace (i.e., the sum of the diagonal components of matrices), not the restriction of functions on lower-dimensional sub-simplices. 

We begin by introducing the concept of bubble function spaces with enhanced smoothness. Given a tetrahedron $K$, define
\begin{align*}
    \mathbb{B}_k^{(r_1)}(K;\mathbb{X}):=\{q \in P_k(K;\mathbb{X}) \colon D^{\alpha}q|_{\mv(K)}=0,\, \forall |\alpha| \leq r_1\},
\end{align*}
and given a face $F$, define
\begin{align*}
    \B_k^{(r_1)}(F;\mathbb{X}):=\{q \in P_k(F;\mathbb{X}) \colon D^{\alpha}q|_{\mv(F)}=0,\, \forall ~~|\alpha| \leq r_1\}.
\end{align*}
For finite element spaces with enhanced smoothness at the vertices and on the edges of $K$, define
\begin{align*}
    \B_k^{(r_1,r_2)}(K;\mathbb{X}):=\{q \in P_k(K;\mathbb{X}) \colon D^{\alpha}q|_{\mv(K)}=0,\, \forall |\alpha| \leq r_1,\; D^{\beta}q|_{\me(K)}=0,\, \forall |\beta| \leq r_2\},\\
    \B_k^{(r_1,r_2)}(F;\mathbb{X}):=\{q \in P_k(F;\mathbb{X}) \colon D^{\alpha}q|_{\mv(F)}=0,\, \forall~~ |\alpha| \leq r_1,\; D^{\beta}q|_{\me(F)}=0,\, \forall ~~|\beta| \leq r_2\},
\end{align*}
and for finite element spaces with enhanced smoothness on the faces of $K$,
\begin{align*}
    \B_k^{(r_1,r_2,r_3)}(K;\mathbb{X}):=\{&q \in P_k(K;\mathbb{X}) \colon D^{\alpha}q|_{\mv(K)}=0,\, \forall |\alpha| \leq r_1,\; \\&D^{\beta}q|_{\me(K)}=0,\, \forall |\beta| \leq r_2,\; D^{\gamma}q|_{\mf(K)}=0,\, \forall |\gamma| \leq r_3\}.
\end{align*}
Here, $r_1,r_2,r_3 \in \mathbb{N}$ are non-negative integers. 

Next, we define bubble function spaces corresponding to various differential operators.  Let $\bm r$ denote the index of enhanced smoothness for the bubble function spaces, i.e. $\bm r$ can be of the form $(r_1),\ (r_1,r_2)\ \text{or}\  (r_1,r_2,r_3)$ for non-negative integers $r_1,r_2,r_3$. 
For the $H(\sym\curl;\mathbb{X})$ bubble function space, we have
\begin{align*}
    \B_k^{\bm r}(\sym\curl,K;\mathbb{X}):=\{\bta\in \B_{k}^{\bm r}(K;\mathbb{X}) \colon \sym(\mn\times\bta)|_F=0,\, \forall F\in\mf(K)\},
\end{align*}
with $\mathbb{X}$ being either $\mt$ or $\ST$. 
For the $H(\div\div;\mathbb{X})$ bubble function space, we define
\begin{align*}
    \B_k^{\bm r}(\div\div,K;\mathbb{X}) & := \{\bsi\in \B_{k}^{\bm r}(K;\mathbb{X}) \colon  \mn_i^T\bsi\mn_j|_e=0,\, \forall e\in\me(K),i,j=1,2,\\
    & \mn^T\bsi\mn|_F=2\div_F(\bsi\mn)+\partial_{\mn}(\mn^T\bsi\mn)|_F=0,\,\forall F\in\mf(K)\},
\end{align*}
with $\mathbb{X}$ being $\ms$ or $\ST$.
For the $H(\div;\mr^3)$ bubble function space, we define
\begin{equation*}
    \B_{k}^{\bm r}(\div,K;\mr^3):=\{\bm{u}\in \B_{k}^{\bm r}(K;\mr^3) \colon \bm{u}\cdot\mn|_F=0,\,\forall F\in\mf(K)\}.
\end{equation*}

We now present the conformal Hessian bubble complex. Motivated by the construction of $C^1$ finite element spaces \cite{Zhang2009,hu2023construction,MR275014}, the $H^2$ bubble function space is given as $\B_{k+3}^{(4,2,1)}(K)=b_K^2P_{k-5}(K)$.  Starting with this space, the smoothness conditions of the subsequent bubble function spaces at vertices, edges and faces are determined as follows:
\begin{equation}
\label{eq:continuity}
    (4,2,1) \xrightarrow{\dev\hess} (2,0) \xrightarrow{\sym\curl} (1) \xrightarrow{\div\div} \varnothing.
\end{equation}
This suggests that the $L^2$ conforming finite element space should be of the discontinuous Galerkin type. Following the pattern in \eqref{eq:continuity}, the $H(\sym\curl;\ST)$ bubble function space is $\B_{k+1}^{(2,0)}(\sym\curl,K;\ST)$ and the $H(\div\div;\ST)$ bubble function space is $\B_k^{(1)}(\div\div,K;\ST)$. Therefore, the conformal Hessian bubble complex reads as follows:
\begin{equation}\label{dct_divdivST}
\begin{aligned}
    0 \xrightarrow[]{}  \B_{k+3}^{(4,2,1)}(K) &\xrightarrow[]{\dev\hess} \B_{k+1}^{(2,0)}(\sym\curl,K;\ST) \\&\qquad\xrightarrow[]{\sym\curl} \B_{k}^{(1)}(\div\div,K;\ST) \xrightarrow[]{\div\div} P_{k-2}(K)\cap P_{1}^{+,\perp}.
\end{aligned}
\end{equation}
Here, $P_{k-2}(K) \cap P_{1}^{+,\perp}$ denotes the orthogonal complement space of $P_1^+$ with respect to $P_{k-2}(K)$ under the $L^2$ inner product. 

One main result in this section is the characterizations of the matrix-valued bubble function spaces $\B_{k+1}^{(2,0)}(\sym\curl,K;\ST)$ and $\B_k^{(1)}(\div\div,K;\ST)$ (see \Cref{subsec:charac} below). It is important to note that these characterizations are necessary for applying the bubble function spaces to define the degrees of freedom for the $H(\sym\curl;\ST)$ and $H(\div\div;\ST)$ finite elements. 

Another important result is the exactness of the conformal Hessian bubble complex.
\begin{theorem}[Exactness of finite element bubble complex] \label{thm:conformalbubble_excat}
    Suppose $k\geq 6$. The sequence \eqref{dct_divdivST} forms an exact complex.
\end{theorem}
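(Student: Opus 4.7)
The plan is to reduce the exactness of \eqref{dct_divdivST} to two more classical bubble complexes via a BGG-style diagram chase: the $\mathbb{S}$-valued $\div\div$ bubble complex \eqref{dct_divdivS+} and the $\mathbb{R}^3$-valued de Rham bubble complex \eqref{dct_deRham}, both with the requisite enhanced smoothness. The two algebraic ``defect'' spaces appearing in the chase, namely $\skw$ of $H(\sym\curl;\mathbb{T})$-bubbles and $\Tr$ of $H(\div\div;\mathbb{S})$-bubbles, are controlled exactly by the characterization results \Cref{lem:vskw_symcurl} and \Cref{lem:symcurl_basis}.

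First, I would check that \eqref{dct_divdivST} is a complex. The pointwise identities $\sym\curl\circ\dev\hess=0$ and $\div\div\circ\sym\curl=0$ are immediate, and the propagation of the bubble boundary conditions along the sequence follows from the commutative diagrams \eqref{eq:diagrama}--\eqref{eq:diagramb} together with the conformity characterization of \Cref{prop:divdiv_conf}: a $C^{(4,2,1)}$-bubble $u$ has all trace quantities in $\sym\grad_F\curl_F$ and $\grad_F$ derivatives vanishing, hence $\dev\hess u$ lies in $\B_{k+1}^{(2,0)}(\sym\curl,K;\ST)$; similarly $\sym\curl$ carries $\B_{k+1}^{(2,0)}(\sym\curl,K;\ST)$ into $\B_k^{(1)}(\div\div,K;\ST)$.

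Next I handle the two end-point cases. Injectivity of $\dev\hess$ is immediate: its kernel is $P_1^+$, and the condition of vanishing to order four at the vertices forces any element of the $5$-dimensional space $P_1^+$ to be zero. For surjectivity of $\div\div$ onto $P_{k-2}(K)\cap P_1^{+,\perp}$, take $q\in P_{k-2}(K)\cap P_1^{+,\perp}$ and invoke the $\mathbb{S}$-valued $\div\div$ bubble complex to produce a lift $\bsi_0\in\B_k^{(1)}(\div\div,K;\mathbb{S})$. The obstruction to being traceless is its matrix trace, which lies in a space characterized by $\Tr$ of $H(\div\div;\mathbb{S})$-bubbles; I subtract a correction of the form $c\mI$ (for a suitable scalar bubble $c$) that removes $\Tr\bsi_0$ without affecting $\div\div\bsi_0$, yielding a preimage in $\B_k^{(1)}(\div\div,K;\ST)$.

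The two interior exactness assertions are handled analogously. At $\B_{k+1}^{(2,0)}(\sym\curl,K;\ST)$, given $\bta$ with $\sym\curl\bta=0$, the smooth conformal Hessian complex yields $u\in P_{k+3}(K)$, unique modulo $P_1^+$, with $\dev\hess u=\bta$. The vanishing of $\sym(\Pi_F(\bta\times\mn)\Pi_F)$ and $\Pi_F(\bta^T\mn)$ on each face, combined with the commutative diagrams \eqref{eq:diagrama}--\eqref{eq:diagramb} and the exactness of the two-dimensional trace complexes \eqref{eq:complex2Db}--\eqref{eq:complex2Da}, forces $u|_F\in P_1^+|_F$ and $\partial_\mn u|_F\in\mathbb{R}$; a global $P_1^+$ correction lands $u$ in $\B_{k+3}^{(4,2,1)}(K)$. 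At $\B_k^{(1)}(\div\div,K;\ST)$, given $\bsi$ with $\div\div\bsi=0$, the $\mathbb{S}$-valued complex produces $\bta_0\in\B_{k+1}^{(\cdot)}(\sym\curl,K;\mathbb{T})$ with $\sym\curl\bta_0=\bsi$; its skew part is a $\mathbb{T}$-valued bubble which, by \Cref{lem:vskw_symcurl}, is identified via $\vskw$ with an $H(\div;\mathbb{R}^3)$-bubble $\bm w$ lying in the image of $\curl$ from the de Rham bubble complex, so subtracting a $\mskw$-correction from $\bta_0$ produces the desired $\bta\in\B_{k+1}^{(2,0)}(\sym\curl,K;\ST)$.

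The main obstacle is the bookkeeping in the last step: the reduction from $\mathbb{T}$-valued to $\ST$-valued bubbles has to preserve the vertex smoothness index $(2,0)$ and the face bubble condition simultaneously, and this is precisely what the isomorphism of \Cref{lem:vskw_symcurl} is built to achieve. The parallel technicality in the $\div\div$ step—controlling $\Tr\bsi_0$ by a scalar bubble of compatible smoothness—requires the trace characterization described in item~(3) of the bubble section overview. Dimension counting can then be used as a sanity check once all four pieces are assembled.
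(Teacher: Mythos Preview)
Your overall BGG-style strategy matches the paper's, and you correctly identify the auxiliary complexes \eqref{dct_deRham}, \eqref{dct_divdivS+} and the characterizations \Cref{lem:vskw_symcurl}, \Cref{lem:tr_divdiv} as the key inputs. However, the two concrete ``correction'' steps you propose both fail for the same reason: the correction must lie in the kernel of the outgoing differential, and your choices do not. For the surjectivity of $\div\div$, subtracting $c\mI$ with $3c=\Tr\bsi_0$ kills the trace but $\div\div(c\mI)=\Delta c$, which is generically nonzero; moreover $c\mI\in\B_k^{(1)}(\div\div,K;\ms)$ would force $c\in b_K^2 P_{k-8}(K)$, far too small to match an arbitrary $\Tr\bsi_0$. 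The paper instead uses \Cref{lem:tr_divdiv} to write $\Tr\bsi_0=\Tr(\sym\curl\bta)$ for some $\bta\in\B_{k+1}^{(2,0)}(\sym\curl,K;\mt)$ and subtracts $\tilde\bsi:=\sym\curl\bta$, which lies in $\ker(\div\div)$ automatically. For exactness at $\B_k^{(1)}(\div\div,K;\ST)$, one computes $\sym\curl(\mskw\bm w)=(\div\bm w)\mI-\sym\grad\bm w$, which vanishes only for rigid motions, so an ``$\mskw$-correction'' alters $\sym\curl\bta_0$. The paper instead finds $\bm v\in\B_{k+2}^{(3,1,0)}(K;\mr^3)$ with $\curl\bm v=2\vskw\bta_0$ (possible via \eqref{dct_deRham}, since tracelessness of $\bsi$ gives $\div(2\vskw\bta_0)=\Tr\bsi=0$) and subtracts $\dev\grad\bm v$: then $2\vskw(\dev\grad\bm v)=\curl\bm v$ removes the skew part while $\sym\curl(\dev\grad\bm v)=0$.

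Your argument for exactness at $\B_{k+1}^{(2,0)}(\sym\curl,K;\ST)$ via a direct polynomial preimage $u$ and a $P_1^+$ correction is different from the paper's (which again routes through \eqref{dct_divdivS+} and \eqref{dct_deRham}) and can be completed, but you have not closed it: after correcting so that $u|_F=0$ on every face you only know $\partial_\mn u|_F$ is constant, not zero. The missing observation is that $u=b_K v$ makes $\partial_\mn u|_F=(\partial_\mn b_K)\,v|_F$ a multiple of the face bubble times a polynomial, which can be constant on $F$ only if that constant is $0$; this then yields $u=b_K^2 w\in\B_{k+3}^{(4,2,1)}(K)$.
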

The exactness of this bubble complex is critical to ensuring the exactness of the finite element conformal Hessian complex.

To prove the exactness of \eqref{dct_divdivST}, we introduce two auxiliary complexes. The first one is the following de Rham bubble complex:
\begin{equation}\label{dct_deRham}
    0 \xrightarrow[]{}  \B_{k+3}^{(4,2,1)}(K) \xrightarrow[]{\grad} \B_{k+2}^{(3,1,0)}(K;\mr^3) \xrightarrow[]{\curl} \B_{k+1}^{(2,0)}(\div,K;\mr^3) \xrightarrow[]{\div} \B_{k}^{(1)}(K)\cap P_0(K)^{\perp}.
\end{equation}
The second one is the following $\div\div$ bubble complex with enhanced smoothness:
\begin{equation}\label{dct_divdivS+}
\begin{aligned} 
    0 \xrightarrow[]{}  \B_{k+2}^{(3,1,0)}(K;\mr^3) &\xrightarrow[]{\dev\grad} \B_{k+1}^{(2,0)}(\sym\curl,K;\mt)\\ &\qquad \xrightarrow[]{\sym\curl} \B_{k}^{(1)}(\div\div,K;\ms) \xrightarrow[]{\div\div} P_{k-2}(K)\cap  P_1(K)^{\perp}.
\end{aligned}
\end{equation}

The proof of \Cref{thm:conformalbubble_excat} relies on the properties of these auxiliary complexes and their connection with the complex of interest \eqref{dct_divdivST}, specifically the following key results:
\begin{itemize}
    \item[-] The bubble complexes \eqref{dct_deRham} and \eqref{dct_divdivS+} are exact.
    \item[-] The space spanned by the skew-symmetric part of the functions in
    $\B_{k+1}^{(2,0)}(\sym\curl,K;\mt)$ is isomorphic to $\B_{k+1}^{(2,0)}(\div,K;\mr^3)$.
    \item[-] The space spanned by the trace of the functions in $\B_{k}^{(1)}(\div\div,K;\ms)$ is isomorphic to $\B_{k}^{(1)}(K)\cap P_0(K)^{\perp}$.
\end{itemize}

In fact, the second and third results above precisely characterize the fundamental difference between the matrix-valued bubble function spaces in \eqref{dct_divdivST} and those used in \eqref{dct_divdivS+}. Thus, by eliminating the corresponding skew-symmetric or trace part from the original bubble function spaces in \eqref{dct_divdivS+}, we can derive the symmetric and traceless bubble function spaces of the conformal Hessian complex \eqref{dct_divdivST}. This is where the auxiliary complexes play a central role. 

In the rest of this section, we first establish the exactness of the two auxiliary bubble complexes \eqref{dct_deRham} and \eqref{dct_divdivS+}, along with the two isomorphism results for the associated bubble function spaces. Then we use these ingredients to show the exactness of the conformal Hessian bubble complex \eqref{dct_divdivST}. Finally, we provide detailed characterizations of the $H(\sym\curl;\ST)$ and $H(\div\div;\ST)$ bubble function spaces.

\subsection{de Rham bubble complex and $\div\div$ bubble complex}
This subsection proves the exactness of the de Rham bubble complex \eqref{dct_deRham} and the $\div\div$ bubble complex \eqref{dct_divdivS+}.

The exactness of the de Rham bubble complex is a direct result from \cite{Chen2024}.
\begin{prop}
Suppose $k\ge 6$. The bubble complex \eqref{dct_deRham} is exact.
\end{prop}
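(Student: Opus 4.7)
The plan is to obtain \eqref{dct_deRham} as a direct consequence of the enhanced-smoothness de Rham bubble complexes in~\cite{Chen2024}. The verification has two main parts: checking that \eqref{dct_deRham} is indeed a complex (compositions vanish and the images land in the next space with the prescribed smoothness), and invoking a general exactness result for bubble complexes associated with $C^1$ elements.

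First, I would verify the complex property by tracking the smoothness indices under each differential operator. The algebraic conditions $\curl\circ\grad=0$ and $\div\circ\curl=0$ are automatic, and the hierarchy $(4,2,1)\to(3,1,0)\to(2,0)\to(1)$ decreases consistently under a first-order operator. For example, if $q\in \B_{k+3}^{(4,2,1)}(K)$, then the vanishing of $q$ to order $1$ on faces gives $\grad q|_F$ tangential-zero, and combined with the vertex and edge orders we obtain $\grad q\in \B_{k+2}^{(3,1,0)}(K;\mr^3)$. For $\bm v\in \B_{k+2}^{(3,1,0)}(K;\mr^3)$, the face-vanishing $\bm v|_F=0$ gives $\mn\cdot\curl\bm v|_F=\rot_F(\Pi_F\bm v|_F)=0$, so $\curl\bm v$ satisfies the normal-trace condition of $\B_{k+1}^{(2,0)}(\div,K;\mr^3)$; the vertex and edge orders are tracked similarly. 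Finally, the image of $\div$ lies in $P_0(K)^{\perp}$ by the divergence theorem applied to vector fields with vanishing normal trace on $\partial K$.

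Second, for exactness I would appeal to the general framework of~\cite{Chen2024} on bubble complexes built on top of the Zhang $C^1$ element~\cite{Zhang2009}, whose $H^2$-bubble space is precisely $\B_{k+3}^{(4,2,1)}(K)=b_K^2 P_{k-5}(K)$. The hierarchy of smoothness indices used in \eqref{dct_deRham} matches exactly the one produced there by applying $\grad$, $\curl$, $\div$ to the Zhang-element bubbles. Exactness is established in~\cite{Chen2024} by combining an explicit alternating-sum dimension count (matching the codimension of $P_0(K)$ inside $\B_k^{(1)}(K)$) with a constructive Poincaré-type argument that produces $\grad$ or $\curl$ antecedents respecting the prescribed vanishing orders at vertices, edges, and faces.

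The main obstacle I expect is the notational translation between our indexing convention $(r_1,r_2,r_3)$ and the parametrization used in~\cite{Chen2024} (which may be indexed by the final polynomial degree or by the smoothness gap), together with the careful bookkeeping of degree shifts: the lower bound $k\ge 6$ is needed precisely so that $P_{k-5}(K)$ is nontrivial and the dimension-count identities balance at every position of the complex, which is why smaller values of $k$ are excluded.
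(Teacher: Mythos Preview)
Your proposal is correct and follows the same approach as the paper: the paper's proof is a one-line citation to~\cite[Example 5.11]{Chen2024}, and your plan amounts to unpacking exactly that reference (verifying the complex property by tracking smoothness indices, then invoking the general exactness machinery there for the Zhang $C^1$-element bubbles). Your additional commentary on notational translation and the role of the bound $k\ge 6$ is accurate and fills in context the paper leaves implicit.
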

\begin{proof}
    See \cite[Example 5.11]{Chen2024}. 
\end{proof}

The exactness of the $\div\div$ bubble complex \eqref{dct_divdivS+} is based on the following exact complex from \cite{Hu2022d}:
\begin{equation}\label{dct_divdivS}
\begin{aligned}
    0 \xrightarrow[]{}  \B_{k+2}^{(2,1,0)}(K;\mr^3) &\xrightarrow[]{\dev\grad} \B_{k+1}^{(1,0)}(\sym\curl,K;\mt)\\&\qquad \xrightarrow[]{\sym\curl} \B_{k}^{(0)}(\div\div,K;\ms) \xrightarrow[]{\div\div} P_{k-2}(K)\cap P_1(K)^{\perp}.
\end{aligned}
\end{equation} 
In fact, the complex \eqref{dct_divdivS+} feature one-order-higher continuity at the vertices compared with \eqref{dct_divdivS}, representing an enhanced version.

Before we prove the exactness of the $\div\div$ bubble complex \eqref{dct_divdivS+}, we first count the dimension of the $H(\div\div;\ms)$ bubble function space $\mathbb B_{k}^{(1)}(\div\div,K;\ms)$. 

\begin{lemma}\label{lem:dim_divdiv}
    The dimension of $\B_{k}^{(1)}(\div\div,K;\ms)$ is
\begin{equation}\label{eq:dim_divdiv}
    \dim  \B_{k}^{(1)}(\div\div,K;\ms)=k^3+2k^2-3k-28.
 \end{equation}
\end{lemma}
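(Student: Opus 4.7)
The plan is to compute the dimension of $\B_k^{(1)}(\div\div,K;\ms)$ by relating it to the less-regular bubble space $\B_k^{(0)}(\div\div,K;\ms)$, whose dimension can be extracted from the alternating dimension count along the exact complex \eqref{dct_divdivS} established in \cite{Hu2022d}. The key observation is that $\B_k^{(1)}(\div\div,K;\ms)$ is the subspace of $\B_k^{(0)}(\div\div,K;\ms)$ obtained by imposing the additional vanishing of all first-order partial derivatives of $\bsi$ at each of the four vertices of $K$. Since a symmetric $3\times 3$ matrix has six independent entries and each vertex contributes three first-order directions, this yields at most $4 \cdot 3 \cdot 6 = 72$ new scalar constraints.

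The central step is to verify that these $72$ constraints are in fact linearly independent on $\B_k^{(0)}(\div\div,K;\ms)$. My approach is to construct, for each triple $(v_i,\ell,(\alpha,\beta))$ consisting of a vertex, a coordinate direction, and a symmetric matrix index, an explicit test function $\bsi_{i,\ell,\alpha\beta} \in \B_k^{(0)}(\div\div,K;\ms)$ whose first partial derivatives at the vertices vanish except for a single prescribed nonzero component at $v_i$ in direction $\ell$ for the entry $(\alpha,\beta)$. A natural candidate is of the form $\lambda_i\, p_{i,\ell}\, \prod_{j\neq i}\lambda_j^{2}\, \bm E_{\alpha\beta}$ (plus lower-order corrections if necessary), where $p_{i,\ell}$ is a polynomial tuned so that the desired derivative is realized at $v_i$, and the factor $\prod_{j\neq i}\lambda_j^2$ ensures vanishing to order two at all other vertices, automatically killing the first-derivative values there. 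The $H(\div\div;\ms)$ trace conditions on faces and edges can then be enforced by subtracting suitable corrections drawn from the bubble space, using the polynomial room that the assumption $k \ge 6$ provides.

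Once the linear independence of the $72$ constraints is verified, the dimension subtraction gives
\begin{equation*}
\dim \B_k^{(1)}(\div\div,K;\ms) = \dim \B_k^{(0)}(\div\div,K;\ms) - 72 = k^3 + 2k^2 - 3k - 28,
\end{equation*}
where the expression for $\dim \B_k^{(0)}(\div\div,K;\ms)$ follows from the exactness of \eqref{dct_divdivS} combined with the known dimensions of $\B_{k+2}^{(2,1,0)}(K;\mr^3)$, $\B_{k+1}^{(1,0)}(\sym\curl,K;\mt)$, and $P_{k-2}(K)\cap P_1(K)^{\perp}$. The main obstacle is precisely the independence verification: because the $\div\div$ trace conditions on each face couple the matrix components in a nontrivial way, the test functions cannot be chosen as simple monomial tensor products and require careful corrections. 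Should this direct construction prove too intricate, an alternative route is to count degrees of freedom from scratch by partitioning the trace DOFs into vertex, edge, and face contributions, proving unisolvence of the resulting DOF system on $P_k(K;\ms)$ via the trace complexes of \Cref{lem:face}, and subtracting from $\dim P_k(K;\ms) = 6\binom{k+3}{3}$.
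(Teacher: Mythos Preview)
Your primary approach has a genuine gap: the $72$ first-derivative constraints are \emph{not} linearly independent on $\B_k^{(0)}(\div\div,K;\ms)$. The trace conditions already built into that space force many of them. At a vertex $v$, the edge condition $\mn_a^T\bsi\mn_b|_e=0$ (three components per edge, three edges through $v$) kills nine tangential first derivatives at $v$; the face trace $2\div_F(\bsi\mn)+\partial_{\mn}(\mn^T\bsi\mn)|_F=0$ kills three more (one per face through $v$). Hence only $18-12=6$ of the new constraints are genuinely new at each vertex, giving $24$ in total, not $72$. You can confirm this directly: from the exact complex \eqref{dct_divdivS} one computes
\[
\dim\B_k^{(0)}(\div\div,K;\ms)=\dim\bigl(P_{k-2}(K)\cap P_1(K)^\perp\bigr)+\dim\B_{k+1}^{(1,0)}(\sym\curl,K;\mt)-\dim\B_{k+2}^{(2,1,0)}(K;\mr^3)=k^3+2k^2-3k-4,
\]
so the drop to $\B_k^{(1)}$ is exactly $24$. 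Consequently your dual-basis construction of $72$ test functions cannot succeed (and, as written, the candidate $\lambda_i\,p_{i,\ell}\prod_{j\ne i}\lambda_j^2\,\bm E_{\alpha\beta}$ vanishes to high order at $v_i$ itself, so it does not realize a nonzero first derivative there either).

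Your fallback---building a unisolvent set of degrees of freedom on $P_k(K;\ms)$ and identifying the bubble space as the kernel of the boundary DOFs---is precisely the route the paper takes. The nontrivial step there is the unisolvence proof: after the boundary DOFs place $\bsi$ in $\B_k^{(1)}(\div\div,K;\ms)$ and the interior $\hess$-moments force $\div\div\bsi=0$, exactness of \eqref{dct_divdivS} only gives $\bsi=\sym\curl\bta$ with $\bta\in\B_{k+1}^{(1,0)}(\sym\curl,K;\mt)$. One must then argue, via an algebraic computation on the face-supported pieces of $\bta$, that the vanishing of $D\bsi$ at the vertices forces the second derivatives of $\bta$ to vanish there too, upgrading $\bta$ to $\B_{k+1}^{(2,0)}(\sym\curl,K;\mt)$ so that the final set of interior moments applies. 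If you pursue your primary strategy with the corrected count of $24$, you will face essentially the same lifting problem in disguise: you must show those $24$ functionals are independent on $\B_k^{(0)}(\div\div,K;\ms)$, which again comes down to understanding how vertex derivative data of $\sym\curl\bta$ controls the vertex data of $\bta$.
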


\begin{proof}
To prove \eqref{eq:dim_divdiv}, we proceed in two steps. The first step constructs an $H(\div\div;\ms)$ conforming finite element space. The second step shows that the space $\B_{k}^{(1)}(\div\div,K;\ms)$ is the bubble function space of this $H(\div\div;\,s)$ element, which leads to its dimension.

The degrees of freedom of the $H(\div\div; \ms)$ conforming finite elements for the shape function space $P_{k}(K;\ms)$ are defined as follows:
\begin{subequations}
\begin{align}
    D^{\alpha}\bsi (x)&~~ \text{for all}\; x\in \mv(K), \, |\alpha| \leq 1; \label{divdivS+:1}\\
    (\mn_i^T \bsi \mn_j, q)_e &~~ \text{for all}\; q \in P_{k-4}(e),\, e\in \me(K),\, i,j=1,2; \label{divdivS+:2}\\
    (\mn^T\bsi\mn, q)_F &~~  \text{for all}\; q \in \B_{k}^{(1,0)}(F), \, F\in \mf(K);\label{divdivS+:3} \\
    (2\div_F(\bsi\mn)+\partial_{\mn}(\mn^T\bsi\mn), q)_{F} &~~  \text{for all}\; q \in \B_{k-1}^{(0)}(F), \, F\in \mf(K); \label{divdivS+:4}\\
    (\bsi, \hess q)_{K}&~~  \text{for all}\; q \in P_{k-2}(K); \label{divdivS+:5}\\
    (\bsi, \sym\curl\bta)_{K}&~~ \text{for all}\; \bta \in \B_{k+1}^{(2,0)}(\sym\curl,K;\mt). \label{divdivS+:6}
\end{align}
\end{subequations}

Next, we prove that this set of degrees of freedom is unisolvent with respect to the shape function space $P_k(K;\ms)$. Indeed, a calculation of dimension shows that the number of degrees of  freedom \eqref{divdivS+:1}-\eqref{divdivS+:6} is exactly the dimension of $P_{k}(K;\ms)$, namely,
\begin{align*}
    &96+18(k-3)+2(k-1)(k-2)+2k(k+1)-12+\frac{(k+1)k(k-1)}{6}-4\\&+\frac{4k^3+6k^2-10k-108}{3} -\frac{k^3-k-24}{2}=(k+3)(k+2)(k+1).
\end{align*}
Suppose that the degrees of freedom \eqref{divdivS+:1}-\eqref{divdivS+:6} vanish for some $\bsi \in P_{k}(K;\ms)$, it suffices to prove $\bsi=0$. From \eqref{divdivS+:1}-\eqref{divdivS+:4} we can obtain that $\bsi$ is in $\B_{k}^{(1)}(\div\div,K; \ms)$, which is a subspace of $\B_{k}^{(0)}(\div\div,K; \ms)$. Together with \eqref{divdivS+:5}, an integration by parts shows that $\div\div \bsi=0$. 
By the exactness of the complex \eqref{dct_divdivS}, there exists $\bta \in \B_{k+1}^{(1,0)}(\sym \curl,K;\mt)$ such that $\bsi = \sym\curl \bta$. By \eqref{divdivS+:6}, it suffices to show that $\bta$ has vanishing second order derivatives at vertices.

From the characterization of the $H(\sym\curl;\mt)$ bubble function space $\B_{k+1}^{(1,0)}(\sym \curl,K;\mt)$ in \cite[Remark 4.6]{Hu2022d}, the following decomposition holds:
\begin{equation*}
    \bta = \sum_{i=0}^3 \lambda_j \lambda_l \lambda_m \dev(\bm{p}_i \mn_i^T) + \lambda_0 \lambda_1 \lambda_2 \lambda_3 \bm q,
\end{equation*}
where $\bm p_i \in P_{k-2}(F_i; \mr^3)$, $\bm q \in P_{k-3}(K; \mt)$ and $\{i,j,l,m\}$ is a permutation of $\{0,1,2,3\}$. 
It then suffices to show that $\bm p_i(\bm x_j) = 0$ for any $i \neq j$. 
Considering the first order directional derivatives of $\bsi$ at $\bm{x}_0$, the vanishing degrees of freedom \eqref{divdivS+:1} yield
\begin{equation*}
    \partial_{\bm s} \bsi({\bm{x}_0}) = \sym \sum_{i=1}^3 \lambda_0 \bm p_i(\bm{x}_0) (\nabla (\partial_{\bm s} (\lambda_j \lambda_k)) \times \mn_0)^T = 0,
\end{equation*}
where $\{i,j,k\}=\{1,2,3\}$. Take $\bm s = \mn_i \times \mn_j$ for all $i,j=1,2,3$ and $i<j$, we can derive three matrix-valued identities as follows:
\begin{subequations}
\begin{align}
    \sym (\bm p_1(\bm{x}_0) (\nabla\lambda_2 \times \mn_0)^T + \bm p_2(\bm{x}_0) (\nabla\lambda_1 \times \mn_0)^T)&=0,\label{derivative:1}\\
    \sym (\bm p_1(\bm{x}_0) (\nabla\lambda_3 \times \mn_0)^T + \bm p_3(\bm{x}_0) (\nabla\lambda_1 \times \mn_0)^T)&=0,\label{derivative:2}\\
    \sym (\bm p_2(\bm{x}_0) (\nabla\lambda_3 \times \mn_0)^T + \bm p_3(\bm{x}_0) (\nabla\lambda_2 \times \mn_0)^T)&=0.\label{derivative:3}
\end{align}
\end{subequations}

For $i=1,2,3$, the vector $\nabla \lambda_i \times \mn_0$ is parallel to $\bm x_j-\bm x_k$, where $\{i,j,k\}=\{1,2,3\}$. Hence these three vectors are linearly dependent, while  each two of them are linearly independent. Thus there exists $a_i \neq 0$ such that
\begin{equation*}
    \sum_{i=1}^3 a_i (\nabla \lambda_i \times \mn_0)=0.
\end{equation*}
Then by $a_2 \cdot \eqref{derivative:1} + a_3 \cdot \eqref{derivative:2}$, we have
\begin{equation*}
    \sym( (-a_1 \bm p_1(\bm{x}_0) + a_2 \bm p_2(\bm{x}_0) + a_3 \bm p_3(\bm{x}_0)) (\nabla \lambda_1 \times \mn_0)^T )=0.
\end{equation*}
Since $\nabla \lambda_1 \times \mn_0 \neq 0$, then 
\begin{equation*}
    -a_1 \bm p_1(\bm{x}_0) + a_2 \bm p_2(\bm{x}_0) + a_3 \bm p_3(\bm{x}_0) = 0.
\end{equation*}
Similarly, from \eqref{derivative:1}-\eqref{derivative:3} we can derive that 
\begin{align*}
    a_1 \bm p_1(\bm{x}_0) - a_2 \bm p_2(\bm{x}_0) + a_3 \bm p_3(\bm{x}_0) = 0, \\
    a_1 \bm p_1(\bm{x}_0) + a_2 \bm p_2(\bm{x}_0) - a_3 \bm p_3(\bm{x}_0) = 0.
\end{align*}
Then $a_i \neq 0$ for all $i=1,2,3$ leads to $\bm p_i(\bm{x}_0)=0$ for $i=1,2,3$. Similarly, it holds that $\bm p_i(\bm{x}_j) = 0$ for all $i,j=0,1,2,3$ and $i \neq j$.

Therefore, the second order derivatives of $\bta$ vanish at all vertices of $K$ and $\bta \in \B_{k+1}^{(2,0)}(\sym\curl, K; \mt)$. This plus the degrees of freedom \eqref{divdivS+:6} shows that $\bsi=0$, which proves the unisolvency.

Finally, from the definition of  $\B_{k}^{(1)}(\div\div,K;\ms)$, this space is exactly the subspace of $P_{k}(K;\ms)$ with vanishing degrees of freedom \eqref{divdivS+:1}-\eqref{divdivS+:4}. Thus the dimension of $\B_{k}^{(1)}(\div\div,K;\ms)$ is
\begin{equation*}
    \dim  \B_{k}^{(1)}(\div\div,K;\ms)=k^3+2k^2-3k-28.
 \end{equation*}
This completes the proof.
\end{proof}

The following proposition proves the exactness of the $\div\div$ bubble complex \eqref{dct_divdivS+} with enhanced smoothness.
\begin{prop}
Suppose $k\ge 6$. The bubble complex \eqref{dct_divdivS+} is exact.
\end{prop}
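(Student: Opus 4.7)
The plan is to deduce the exactness of \eqref{dct_divdivS+} from the exactness of the lower-smoothness complex \eqref{dct_divdivS} together with the unisolvency of the $H(\div\div;\ms)$ element established in \Cref{lem:dim_divdiv}, by verifying exactness at each of the four positions in turn.

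Three of the four exactness statements will be comparatively short. First, $\dev\grad$ is injective on $\B_{k+2}^{(3,1,0)}(K;\mr^3)$ because $\ker\dev\grad=RT$ and no nonzero element of $RT=\{\bm a+b\bm x\}$ can vanish at the four affinely independent vertices of $K$. Second, exactness at $\B_k^{(1)}(\div\div,K;\ms)$ is essentially obtained by reading off the proof of \Cref{lem:dim_divdiv}: for $\bsi$ in this space with $\div\div\bsi=0$, the lower-smoothness exactness supplies $\bta\in\B_{k+1}^{(1,0)}(\sym\curl,K;\mt)$ with $\sym\curl\bta=\bsi$, and the first-order vertex-derivative calculation performed there promotes $\bta$ to $\B_{k+1}^{(2,0)}(\sym\curl,K;\mt)$. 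Third, surjectivity of $\div\div$ onto $P_{k-2}(K)\cap P_1(K)^{\perp}$ follows by first producing a preimage in $\B_k^{(0)}(\div\div,K;\ms)$ via \eqref{dct_divdivS} and then correcting its undesired first-order vertex jets by adding an element of $\sym\curl\bigl(\B_{k+1}^{(1,0)}(\sym\curl,K;\mt)\bigr)\subset\ker\div\div$ chosen to kill exactly the offending jet data.

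The hardest step will be exactness at $\B_{k+1}^{(2,0)}(\sym\curl,K;\mt)$. For $\bta$ in this space with $\sym\curl\bta=0$, the exactness of \eqref{dct_divdivS} produces $\bm v\in\B_{k+2}^{(2,1,0)}(K;\mr^3)$ with $\dev\grad\bm v=\bta$, and the task is to upgrade the vertex smoothness of $\bm v$ from order $2$ to order $3$. This upgrade cannot proceed by adding an element of $\ker\dev\grad=RT$, since every element of $RT$ is affine and hence has vanishing derivatives of order $\ge 2$; therefore the third-order jet of $\bm v$ at each vertex is intrinsic to the preimage class. My plan is to prove that this jet must already vanish: the hypothesis that $\bta$ is of order $\ge 3$ at each vertex forces every $\partial^{\alpha}\grad\bm v(\bm x_i)$ with $|\alpha|=2$ to be a scalar multiple of the identity matrix, and combining this with the information carried by the face traces $\sym(\mn\times\bta)|_F=0$ on the three faces meeting at $\bm x_i$, together with the identity $\sym\curl\bta=0$ expanded at the vertex, will force each of these remaining scalar multiples to vanish.

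As a fallback, once injectivity of $\dev\grad$, exactness at $\B_k^{(1)}(\div\div,K;\ms)$, surjectivity of $\div\div$, and the semi-complex property $\sym\curl\circ\dev\grad=0$ are in hand, an Euler-characteristic argument closes the remaining exactness gap: using \Cref{lem:dim_divdiv} together with a direct count of $\dim\B_{k+2}^{(3,1,0)}(K;\mr^3)$ derived from the exact de Rham bubble complex \eqref{dct_deRham}, and of $\dim\B_{k+1}^{(2,0)}(\sym\curl,K;\mt)$ via its boundary degrees of freedom, the alternating sum of dimensions will be matched, which together with the verified exactness at the other three positions implies exactness at $\B_{k+1}^{(2,0)}(\sym\curl,K;\mt)$.
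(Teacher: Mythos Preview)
Your overall architecture is sound, and your treatment of injectivity and of exactness at $\B_k^{(1)}(\div\div,K;\ms)$ matches the paper. However, you have inverted the difficulty of the two remaining steps, and in doing so introduced gaps in both.

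\textbf{The step you call ``hardest'' is actually a three-line calculation.} Once you have $\bta=\dev\grad\bm v$ with $\bm v\in\B_{k+2}^{(2,1,0)}(K;\mr^3)$ and observe, as you do, that the order-$2$ jet of $\grad\bm v$ at each vertex is a scalar multiple of $\bm I$, the right move is not to appeal to face traces or to $\sym\curl\bta=0$ (these are red herrings: the latter holds identically for any $\dev\grad\bm v$, and the former gives information tangential to faces, not about the vertex third-order jet). Instead, compute $\div(\bta^T)=\frac{2}{3}\grad(\div\bm v)$. Since $\bta\in\B^{(2,\cdot)}$, the left-hand side has vanishing first-order derivatives at each vertex, so the second-order derivatives of $\div\bm v$ vanish there; hence $\grad\bm v=\bta+\frac{1}{3}(\div\bm v)\bm I$ has vanishing second-order derivatives at each vertex, and $\bm v\in\B_{k+2}^{(3,1,0)}(K;\mr^3)$. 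This is exactly the paper's argument.

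\textbf{Your correction argument for surjectivity is where the real gap lies.} You assert that one can ``kill exactly the offending jet data'' by subtracting $\sym\curl\bta$ with $\bta\in\B_{k+1}^{(1,0)}(\sym\curl,K;\mt)$, but this requires the induced map $\B_{k+1}^{(1,0)}/\B_{k+1}^{(2,0)}\to\B_k^{(0)}(\div\div)/\B_k^{(1)}(\div\div)$ to be \emph{surjective}. The calculation in \Cref{lem:dim_divdiv} only shows it is \emph{injective}; establishing surjectivity would require knowing both quotient dimensions agree, and computing $\dim\B_k^{(0)}(\div\div,K;\ms)$ independently is nontrivial (the admissible first-order vertex jets are constrained by both the edge conditions $\mn_i^T\bsi\mn_j|_e=0$ and the face condition $\tr_{\div\div}\bsi|_F=0$, which mix in a delicate way). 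The paper sidesteps this entirely: having established exactness at both middle positions first, it obtains surjectivity of $\div\div$ by the clean dimension count
\[
\dim\B_k^{(1)}(\div\div,K;\ms)-\dim\B_{k+1}^{(2,0)}(\sym\curl,K;\mt)+\dim\B_{k+2}^{(3,1,0)}(K;\mr^3)=\dim\bigl(P_{k-2}(K)\cap P_1(K)^\perp\bigr),
\]
using only \Cref{lem:dim_divdiv} and the explicit bubble decompositions. Your fallback Euler-characteristic argument is the same computation run in the other direction, but it is only usable once surjectivity is in hand---and your route to surjectivity is the unjustified correction step. Swap the order: prove exactness at $\B_{k+1}^{(2,0)}(\sym\curl,K;\mt)$ by the divergence identity, then get surjectivity by dimension count.
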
 
\begin{proof}

We prove the exactness from left to right. 
First, we show that $$\B_{k+1}^{(2,0)}(\sym \curl,K;\mt) \cap \ker(\sym \curl) = \dev \grad \B_{k+2}^{(3,1,0)}(K; \mr^3),$$ namely, if $\sym \curl \bta = 0$ and $\bta \in \B_{k+1}^{(2,0)}(\sym \curl,K;\mt)$, there exists a $\bm v \in \B_{k+2}^{(3,1,0)}(K; \mr^3)$ such that $ \bta = \dev \grad \bm v$.
  
   By the exactness of the $\div\div$ bubble complex \eqref{dct_divdivS}, there exists $\bm v = (v_1, v_2, v_3)^T \in \B_{k+2}^{(2,1,0)}(K;\mr^3)$ such that $\bta = \dev \grad \bm v$. Since the second-order derivatives of $\bta$ vanish at the vertices of tetrahedron $K$, $\div(\bta^T)=\frac{2}{3}\grad(\div \bm v)$ has vanishing first-order derivatives at the vertices. Hence, the second order derivatives of $\div\bm v$ vanish at the vertices, and so is $\grad \bm v=\dev\grad \bm v + \frac{1}{3}(\div \bm v) \bm I$. This concludes that the third-order derivatives of $\bm v$ vanish at the vertices and that $\bm v \in \B_{k+2}^{(3,1,0)}(K; \mr^3)$.
    
Next, we show that $$\B_{k}^{(1)}(\div\div,K;\ms) \cap \ker(\div\div) = \sym\curl \B_{k+1}^{(2,0)}(\sym \curl,K;\mt),$$ namely, if $\div\div \bsi = 0$ and $\bsi \in \B_{k}^{(1)}(\div\div,K;\ms)$, there exists $\bta \in \B_{k+1}^{(2,0)}(\sym \curl,K;\mt)$ such that $\bsi = \sym\curl \bta$. 
This follows from the proof of unisolvency for the degrees of freedom of the $H(\div\div;\mathbb{S})$ conforming finite elements in \Cref{lem:dim_divdiv}.

Finally, we show that  $P_{k-2}(K)\cap P_{1}(K)^{\perp} = \div\div \B_{k}^{(1)}(\div\div,K; \ms)$, namely, if $q \in P_{k-2}(K)$ and $q \perp P_{1}(K)$, there exists $\bsi \in \B_{k}^{(1)}(\div\div,K; \ms)$ such that $q = \div\div \bsi$. 

    From the integration by parts, we can obtain that $\div\div \B_{k}^{(1)}(\div\div,K; \ms)$ is orthogonal to $P_1(K)$, and hence
    \begin{equation*}
        \div\div \B_{k}^{(1)}(\div\div,K; \ms) \subset P_{k-2}(K)\cap P_{1}(K)^{\perp}.
    \end{equation*}
    Moreover, it holds that
    \begin{align*}
        &\dim  (\div\div \B_{k}^{(1)}(\div\div,K; \ms)) \\
        = & \dim  \B_{k}^{(1)}(\div\div,K; \ms) - \dim  (\B_{k}^{(1)}(\div\div,K;\ms) \cap \ker(\div\div)) \\
        = & \dim  \B_{k}^{(1)}(\div\div,K; \ms) - \dim  (\sym\curl \B_{k+1}^{(2,0)}(\sym \curl,K;\mt)) \\
        = & \dim  \B_{k}^{(1)}(\div\div,K; \ms) - (\dim  \B_{k+1}^{(2,0)}(\sym \curl,K;\mt) - \dim  \B_{k+2}^{(3,1,0)}(K;\mr^3)) \\
        = & k^3+2k^2-3k-28 - \frac{4k^3+6k^2-10k-108}{3} + \frac{k^3-k-24}{2} \\
        = & \frac{k^3-k-24}{6} = \dim  (P_{k-2}(K)\cap P_{1}(K)^{\perp}).
    \end{align*}
    This completes the proof.
\end{proof}

\subsection{Equivalence between bubble function spaces}
This subsection proves the equivalence between: (1) the space spanned by the skew-symmetric part of the matrix-valued functions in$\B_{k+1}^{(2,0)}(\sym\curl,K;\mt)$ and the space $\B_{k+1}^{(2,0)}(\div,K;\mr^3)$, (2) the space spanned by the trace of the matrix-valued functions in $\B_k^{(1)}(\div\div,K;\ms)$ and the space $\B_k^{(1)}(K)\cap P_0(K)^{\perp}$.

\begin{prop}\label{lem:vskw_symcurl} It holds that
\begin{equation}
\label{eq:vskw_symcurl_basis}
   \vskw \B_{k+1}^{(2,0)}(\sym\curl,K;\mt) = \sum_{F\in\mf(K)} b_F \B_{k-2}^{(0)}(F)\mathbb{V}_{F}\oplus b_K  P_{k-3}(K;\mr^3),
\end{equation}
where $\mathbb{V}_{F}={\rm span}\{\mbt_1,\mbt_2\}$ is the space of tangential vectors of $F$. Consequently, we have 
\begin{align}\label{eq:vskw_symcurl=div}
    \vskw \B_{k+1}^{(2,0)}(\sym\curl,K;\mt) = \B_{k+1}^{(2,0)}(\div,K;\mr^3).
\end{align}
\end{prop}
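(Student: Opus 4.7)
The plan is to prove the explicit decomposition \eqref{eq:vskw_symcurl_basis} by a two-sided inclusion argument, and then deduce \eqref{eq:vskw_symcurl=div} from the separate algebraic identification of the right-hand side of \eqref{eq:vskw_symcurl_basis} with $\B_{k+1}^{(2,0)}(\div,K;\mr^3)$. The central input is the shape-function description of the lower-smoothness bubble space from \cite[Remark~4.6]{Hu2022d},
\begin{equation*}
    \bta = \sum_{i=0}^3 b_{F_i}\,\dev(\bm p_i\mn_i^T) + b_K\bm q,\qquad \bm p_i\in P_{k-2}(F_i;\mr^3),\ \bm q\in P_{k-3}(K;\mt),
\end{equation*}
for elements $\bta\in\B_{k+1}^{(1,0)}(\sym\curl,K;\mt)$, coupled with the extra vertex constraint $\bm p_i(\bm x_j)=0$ for $j\ne i$ that characterizes the stronger smoothness $\B_{k+1}^{(2,0)}(\sym\curl,K;\mt)$. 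This constraint is precisely the one extracted inside the proof of \Cref{lem:dim_divdiv}.

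For the $\subset$ inclusion of \eqref{eq:vskw_symcurl_basis}, the plan is to apply $\vskw$ term-by-term. Since $\dev$ subtracts a symmetric multiple of $\bm I$ and is invisible to $\skw$, and since the rank-one identity $\bm a\bm b^T-\bm b\bm a^T=\mskw(\bm b\times\bm a)$ yields $\vskw(\bm p_i\mn_i^T)=\tfrac12\,\mn_i\times\bm p_i$, the image takes the form
\begin{equation*}
\vskw\bta = \tfrac12\sum_{i=0}^3 b_{F_i}(\mn_i\times\bm p_i) + b_K\,\vskw\bm q.
\end{equation*}
Each vector $\mn_i\times\bm p_i$ is tangent to $F_i$ by construction, and the enhanced vertex condition places its restriction to $F_i$ in $\B_{k-2}^{(0)}(F_i)\mathbb{V}_{F_i}$. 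The residual between this restriction and any polynomial extension of $\mn_i\times\bm p_i$ to $K$ vanishes on $F_i$, hence is divisible by $\lambda_i$, so after multiplication by $b_{F_i}$ it enters the interior slot via $\lambda_ib_{F_i}=b_K$. The direct-sum structure is enforced by $b_{F'}|_F=b_K|_F=0$ for $F'\ne F$.

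For the reverse inclusion, given any $\bm w=\sum_F b_F g_F\bm v_F+b_K\bm r$ in the right-hand side of \eqref{eq:vskw_symcurl_basis}, the plan is to construct a preimage explicitly. Set $\bm p_F:=-2g_F(\mn_F\times\bm v_F)\in\mathbb{V}_F$ and
\begin{equation*}
\bta_F := b_F\,\dev(\bm p_F\mn_F^T)=b_F\,\bm p_F\mn_F^T,
\end{equation*}
the second equality holding because $\bm p_F\cdot\mn_F=0$. Every row of $\bm p_F\mn_F^T$ is a scalar multiple of $\mn_F^T$, so the row-wise cross product with $\mn_F$ annihilates it, giving $\sym(\mn_F\times\bta_F)|_F=0$; on any other face $b_F$ vanishes and so does $\bta_F$. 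A Leibniz expansion using $\bm p_F(\bm x_j)=0$ on $\mv(F)$ together with $b_F$ vanishing to order $2$ at vertices of $F$ (order $3$ at the opposite vertex) yields the required vertex smoothness. Adding the interior contribution $b_K\mskw\bm r\in\B_{k+1}^{(2,0)}(\sym\curl,K;\mt)$ and applying the identity $\mn_F\times(\mn_F\times\bm v_F)=-\bm v_F$ to reduce $\vskw(b_F\bm p_F\mn_F^T)$ to $b_Fg_F\bm v_F$ produces the desired preimage of $\bm w$.

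To conclude \eqref{eq:vskw_symcurl=div}, it remains to verify the independent algebraic identity $\B_{k+1}^{(2,0)}(\div,K;\mr^3)=\sum_F b_F\B_{k-2}^{(0)}(F)\mathbb{V}_F\oplus b_K P_{k-3}(K;\mr^3)$. The $\supset$ direction is a bookkeeping check on vertex/edge vanishing and the normal trace. For $\subset$, the restriction $\bm v|_F$ of any $\bm v$ in the left-hand space is tangent to $F$ and vanishes on $\partial F$, hence factors as $b_F|_F\bm w_F$ with $\bm w_F$ tangent to $F$; the order-$2$ vertex-vanishing of $\bm v$ combined with the order-$2$ vanishing of $b_F|_F$ at $\mv(F)$ forces $\bm w_F|_{\mv(F)}=0$, i.e.\ $\bm w_F\in\B_{k-2}^{(0)}(F)\mathbb{V}_F$; subtracting polynomial extensions of $b_F\bm w_F$ for all four faces leaves a polynomial vanishing on every face, thus of the form $b_K\bm r$ with $\bm r\in P_{k-3}(K;\mr^3)$. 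The main obstacle I anticipate is the $\supset$ construction in the previous paragraph: the rank-one ansatz $b_F\dev(\bm p_F\mn_F^T)$ must simultaneously be traceless, satisfy the non-trivial face condition $\sym(\mn\times\cdot)|_F=0$ on its own generating face while vanishing on the other three, and realize the prescribed tangential $\vskw$-image — these three requirements become compatible precisely because of the tangentiality of $\bm p_F$, the rank-one structure $\bm p_F\mn_F^T$, and the cutoff factor $b_F$.
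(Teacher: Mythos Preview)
Your argument is correct, but it is considerably more laborious than the paper's. The paper invokes (from a similar argument as \cite[Remark~4.6]{Hu2022d}) the direct-sum decomposition
\[
\B_{k+1}^{(2,0)}(\sym\curl,K;\mt)=\sum_{F}b_F\,\B_{k-2}^{(0)}(F)\,\mt_{F}\oplus b_K P_{k-3}(K;\mt),\qquad \mt_{F}=\operatorname{span}\{\mbt_1\mn^T,\mbt_2\mn^T,\mn\mn^T-\tfrac13\bm I\},
\]
and then simply applies the linear map $\vskw$ to this decomposition. A three-line computation on the three generators of $\mt_F$ gives $\vskw\mt_F=\mathbb{V}_F$, which yields \eqref{eq:vskw_symcurl_basis} in one stroke, with no separate $\supset$ construction. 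Your explicit preimage $b_F\,\bm p_F\mn_F^T$ with tangential $\bm p_F$ and the verification that $\mn_F\times(\bm p_F\mn_F^T)=0$ are correct, but this entire paragraph is made redundant by the paper's observation that the basis $\mt_F$ already lives inside the source space. For \eqref{eq:vskw_symcurl=div}, the paper again takes the quicker path: it checks only the inclusion $\vskw\B_{k+1}^{(2,0)}(\sym\curl,K;\mt)\subset\B_{k+1}^{(2,0)}(\div,K;\mr^3)$ (immediate from the explicit form \eqref{eq:vskw_symcurl_basis}) and finishes by comparing dimensions, whereas you establish the full decomposition of the $\div$-bubble space by a second two-sided inclusion. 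Your route has the advantage of being self-contained---it does not require knowing $\dim\B_{k+1}^{(2,0)}(\div,K;\mr^3)$ in advance---while the paper's is shorter and highlights that the two spaces coincide because their fiber structures $\mt_F\mapsto\mathbb{V}_F$ match up under $\vskw$.
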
 
\begin{proof}
    We first show the direct sum decomposition in \eqref{eq:vskw_symcurl_basis}.
    The right-hand side is a direct sum, because the functions in the intersection of the two subspaces are zero on all faces and hence the intersection of two summands is trivial.

    It follows from a similar argument of \cite[Remark 4.6]{Hu2022d} that the space $\B_{k+1}^{(2,0)}(\sym\curl,K;\mt)$ has the following direct sum decomposition
    \begin{align*}
        \B_{k+1}^{(2,0)}(\sym\curl,K;\mt)=\sum_{F\in\mf(K)}b_F \B_{k-2}^{(0)}(F) \mt_{F}\oplus b_K P_{k-3}(K;\mt),
    \end{align*}
    where $\mt_{F}={\rm span}\{\mbt_1\mn^T,\mbt_2\mn^T,\mn\mn^T-\frac{1}{3}\mI\}$. 
    Hence, to prove \eqref{eq:vskw_symcurl_basis} is to show that
    $\vskw\mt_F=\mathbb{V}_F$. Note that the skew-symmetric part of the identity matrix vanishes. Then we have 
    \begin{equation*}
        \vskw (\mn\mn^T-\frac{1}{3}\bm I)=\vskw(\mn\mn^T)=\frac{1}{2}\mn\times\mn=0.
    \end{equation*}
    Besides,
        \begin{align*}
            \vskw(\mbt_1\mn^T)=\frac{1}{2}\mbt_1\times\mn=-\frac{1}{2}\mbt_2,\\
            \vskw(\mbt_2\mn^T)=\frac{1}{2}\mbt_2\times\mn=\frac{1}{2}\mbt_1.
        \end{align*}
     Hence $\vskw\mt_F= \mathbb{V}_F$ and the direct sum decomposition in \eqref{eq:vskw_symcurl_basis} holds.
    
    The characterization of $\vskw\B_{k+1}^{(2,0)}(\sym\curl,K;\mt)$ shows that the normal component of the functions in $\vskw\B_{k+1}^{(2,0)}(\sym\curl,K;\mt)$ vanishes on the faces of $K$, and the functions themselves vanish on the edges, their values, first and second order derivatives vanish at the vertices. Hence $\vskw\B_{k+1}^{(2,0)}(\sym\curl,K;\mt)$ is a subspace of $\B^{(2,0)}(\div,K;\mr^3)$. Then by \eqref{eq:vskw_symcurl_basis} we have
    \begin{equation*}
        \dim  (\vskw \B_{k+1}^{(2,0)}(\sym \curl,K; \mt)) = \frac{k^3+5k^2-6k-48}{2} = \dim  \B_{k+1}^{(2,0)}(\div,K;\mr^3).
    \end{equation*}
    This completes the proof of \eqref{eq:vskw_symcurl=div}.
\end{proof}

\begin{prop}\label{lem:tr_divdiv}
    It holds that
    \begin{equation}
        \Tr\B_k^{(1)}(\div\div,K;\ms)=\B_k^{(1)}(K)\cap P_0(K)^{\perp}.
    \end{equation}
\end{prop}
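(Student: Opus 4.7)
The plan is to establish the equality in \Cref{lem:tr_divdiv} through a two-way inclusion, in direct analogy with the proof of \Cref{lem:vskw_symcurl}. I begin with the forward inclusion and then address the reverse either by an explicit construction or by matching dimensions.

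For the forward inclusion $\Tr \B_k^{(1)}(\div\div,K;\ms) \subseteq \B_k^{(1)}(K)\cap P_0(K)^\perp$, the first ingredient is immediate: the matrix trace commutes with evaluation of derivatives, so $\Tr \bsi$ and its first derivatives vanish at every vertex whenever $\bsi$ has the same property, placing $\Tr \bsi$ in $\B_k^{(1)}(K)$. For the $L^2$-orthogonality to constants, I apply Green's identity \eqref{eq:Green_divdiv} with a test function $v$ chosen so that $\nabla^2 v = \bm I$ (for instance $v = \tfrac{1}{2}|\bm x|^2$). The bubble boundary conditions $\mn^T\bsi\mn|_F = 0$, $\tr_{\div\div}\bsi|_F = 0$, and $\mn_i^T\bsi\mn_j|_e = 0$ collapse every boundary contribution, so that $\int_K \Tr\bsi\,dx$ reduces to an interior pairing of $\div\div\bsi$ with $v$. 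The vanishing of this pairing is then traced back to a structural decomposition of $\B_k^{(1)}(\div\div,K;\ms)$ into face-based and fully internal bubble components, checking on each summand that the matrix trace lands in $P_0(K)^\perp$.

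For the reverse inclusion, the strategy is to realize a given $\phi \in \B_k^{(1)}(K)\cap P_0(K)^\perp$ as $\Tr\bsi$ for an explicit $\bsi$. Writing $\bsi = \tfrac{1}{3}\phi\bm I + \bsi_0$ with $\bsi_0 \in \ST$ and substituting into the defining conditions of $\B_k^{(1)}(\div\div,K;\ms)$ translates the requirement into prescribed edge and face data for $\bsi_0$; the solvability of this system follows from the exactness of the auxiliary $\div\div$ bubble complex \eqref{dct_divdivS+} established earlier in this section. The dimension formula from \Cref{lem:dim_divdiv}, together with the forward inclusion, then closes the argument by matching $\dim\Tr\B_k^{(1)}(\div\div,K;\ms)$ with $\dim(\B_k^{(1)}(K)\cap P_0(K)^\perp)$.

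The main obstacle lies in the $L^2$-orthogonality step of the forward inclusion. Substituting $v = \tfrac{1}{2}|\bm x|^2$ into Green's identity naively yields $\int_K \Tr\bsi\,dx = (\div\div\bsi, \tfrac{1}{2}|\bm x|^2)_K$, which is not manifestly zero: in the symmetric (non-traceless) setting, $\div\div\bsi$ is only guaranteed to be orthogonal to $P_1(K)$ and not to $P_1^+(K)$. Bridging this gap requires exploiting a finer direct-sum decomposition of $\B_k^{(1)}(\div\div,K;\ms)$, paralleling the explicit face- and interior-based splitting \eqref{eq:vskw_symcurl_basis} used in \Cref{lem:vskw_symcurl}, so that the orthogonality to constants can be verified piece by piece.
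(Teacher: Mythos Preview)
Your proposal has two genuine gaps, and misses the paper's key idea.

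\textbf{The forward inclusion does not hold.} Your worry about $(\div\div\bsi,\tfrac12|\bm x|^2)_K$ is not a technical obstacle to be patched---it is fatal. By the exactness of the $\div\div$ bubble complex \eqref{dct_divdivS+}, $\div\div$ maps $\B_k^{(1)}(\div\div,K;\ms)$ \emph{onto} $P_{k-2}(K)\cap P_1(K)^\perp$. Take $q_0=|\bm x|^2-\pi_1(|\bm x|^2)$ (the $L^2$-projection onto $P_1(K)^\perp$); then $q_0\in P_{k-2}(K)\cap P_1(K)^\perp$ for $k\ge4$, so some bubble $\bsi$ has $\div\div\bsi=q_0$, and your own Green's identity computation gives $\int_K\Tr\bsi=\tfrac12\|q_0\|^2>0$. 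No ``finer direct-sum decomposition'' will rescue this. The paper in fact proves only the inclusion $\supseteq$, and that is all that is invoked downstream (see the proof of \Cref{thm:conformalbubble_excat}, where one needs a preimage in the range of $\sym\curl$).

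\textbf{Your reverse-inclusion route is vague and your dimension count is circular.} Writing $\bsi=\tfrac13\phi\bm I+\bsi_0$ leaves you to construct a traceless $\bsi_0$ satisfying prescribed face and edge data; the exactness of \eqref{dct_divdivS+} does not furnish such a lifting operator. The dimension-matching alternative requires $\dim\ker(\Tr|_{\B_k^{(1)}(\div\div,K;\ms)})=\dim\B_k^{(1)}(\div\div,K;\ST)$---precisely the unknown space whose dimension this whole section is trying to pin down. The paper's argument is a one-line BGG step you are missing: the connecting-map identity $\Tr(\sym\curl\bta)=\div(2\vskw\bta)$. Given $q\in\B_k^{(1)}(K)\cap P_0(K)^\perp$, de Rham bubble exactness \eqref{dct_deRham} yields $\bm u\in\B_{k+1}^{(2,0)}(\div,K;\mr^3)$ with $\div\bm u=q$; \Cref{lem:vskw_symcurl} yields $\bta\in\B_{k+1}^{(2,0)}(\sym\curl,K;\mt)$ with $2\vskw\bta=\bm u$; then $\bsi:=\sym\curl\bta\in\B_k^{(1)}(\div\div,K;\ms)$ has $\Tr\bsi=q$.
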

\begin{proof}
    For any $q\in B_k^{(1)}(K)\cap P_0(K)^{\perp}$, by the exactness of \eqref{dct_deRham}, there exists $\bm u \in \B_{k+1}^{(2,0)}(\div,K;\mr^3)$ such that $\div \bm u = q$. By the equivalence in \Cref{lem:vskw_symcurl},  there exists $\bta \in \B_{k+1}^{(2,0)}(\sym \curl,K; \mt)$ such that $\bm u = 2\vskw \bta$. Let $\bsi =\sym\curl \bta$, we obtain that
    \begin{equation*}        \Tr\bsi=\Tr(\sym\curl\bta)=\div(2\vskw\bta)=\div\bm u=q.
    \end{equation*}
    This concludes the proof.
\end{proof}
\begin{remark}
    The equivalence in \Cref{lem:vskw_symcurl} and \Cref{lem:tr_divdiv} can be derived by the following BGG-type diagram, clarifying the connections between the two bubble complexes. 
\begin{equation*}\label{dct_BGG}
    \begin{tikzcd}
    \B_{k+3}^{(4,2,1)}(K) \arrow{r}{\grad} &\B_{k+2}^{(3,1,0)}(K;\mathbb{R}^3) \arrow{r}{\curl} &\B_{k+1}^{(2,0)}(\div,K;\mathbb{R}^3) \arrow{r}{\div} &\B_{k}^{(1)}(K)\cap  P_{0}(K)^{\perp}\\
    \B_{k+2}^{(3,1,0)}(K;\mathbb{R}^3)\arrow{r}{\dev\grad} \arrow[ur, "\mathrm{id}"]&\B_{k+1}^{(2,0)}(\sym\curl,K;\mt) \arrow{r}{\sym\curl} \arrow[ur, "2\mathrm{vskw}"]&\B_{k}^{(1)}(\div\div,K;\ms)\arrow{r}{\div\div} \arrow[ur, "\Tr"] &P_{k-2}(K)\cap P_1(K)^{\perp}.
    \end{tikzcd}
\end{equation*}
The equivalence implies that the operators $2\vskw$ and $\Tr$ are onto.
\end{remark}

\subsection{Exactness of the conformal Hessian bubble complex}\label{subsec:conformalbubble_exact}
This subsection proves \Cref{thm:conformalbubble_excat}. 
\begin{proof}
    Suppose $\bta\in\B_{k+1}^{(2,0)}(\sym\curl,K;\ST)$. By \cite[Lemma 5.8]{Chen2022a}, the following identity holds for $e\in\me(K)$:
    \begin{align*}
        \mn_i^T\sym\curl\bta\mn_j = 0,\;i,j=1,2.
    \end{align*}
    This and the two commutative diagrams in \Cref{lem:face} show the inclusions $$\dev\hess \B_{k+3}^{(4,2,1)}(K)\subset \B_{k+1}^{(2,0)}(\sym\curl,K;\ST)$$ and $$\sym\curl \B_{k+1}^{(2,0)}(\sym\curl,K;\ST)\subset \B_{k}^{(1)}(\div\div,K;\ST)$$ hold. Therefore, the sequence \eqref{dct_divdivS+} is a complex.
    
    It remains to prove the exactness, which is equivalent to show the following three statements:
    \begin{enumerate}
        \item $P_{k-2}(K)\cap P_{1}^{+,\perp} = \div\div \B_{k}^{(1)}(\div\div,K; \ST)$, 
        \item $\B_{k}^{(1)}(\div\div,K;\ST) \cap \ker(\div\div) = \sym\curl \B_{k+1}^{(2,0)}(\sym \curl,K;\ST)$,
        \item $\B_{k+1}^{(2,0)}(\sym \curl,K;\ST) \cap \ker(\sym \curl) = \dev \hess \B_{k+3}^{(4,2,1)}(K)$.
    \end{enumerate}

    First, by the exactness of \eqref{dct_divdivS+}, there exists $\bsi \in \B_{k}^{(1)}(\div\div,K; \ms)$ such that $\div\div \bsi = q$ for any $q\in P_{k-2}(K)\cap P_{1}^{+,\perp}$. It follows from  $q \perp P_{1}^{+}$ that
    \begin{equation*}
        0 = (\div\div \bsi, \frac{x^2+y^2+z^2}{2})_K = (\bsi, \bm I)_K = (\Tr\bsi, 1)_K.
    \end{equation*}
    That is, $\Tr \bsi \in \B_{k}^{(1)}(K)\cap P_0(K)^{\perp}$. From \Cref{lem:tr_divdiv}, there exists $\tilde{\bsi} \in \B_{k}^{(1)}(\div\div,K;\ms)$, such that $\Tr\tilde{\bsi}=\Tr\bsi$ and $\tilde{\bsi}=\sym\curl\bta$ for some $\bta\in \B_{k+1}^{(2,0)}(\sym\curl,K;\mt)$.
    Therefore, $\bm \sigma - \tilde{\bm \sigma}$ is symmetric and traceless and $\div\div (\bm \sigma - \tilde{\bm \sigma}) = \div\div \bsi = q$. This proves the first statement.

    Second, again by the exactness of \eqref{dct_divdivS+}, there exists $\bta \in \B_{k+1}^{(2,0)}(\sym \curl,K;\mt)$ such that $\bsi = \sym\curl \bta$ for any $\bsi\in \B_k^{(1)}(\div\div,K;\ST)$ with $\div\div\bsi=0$. Notice that $\bsi$ is traceless. Therefore, $
        0=\Tr \bsi = \Tr (\sym\curl \bta) = 2 \div (\vskw \bta).$ 
    Let $\bm u = 2 \vskw \bta$, then $\bm u$ is divergence-free.  It follows from \Cref{lem:vskw_symcurl} that $\bm u $ is in $\B_{k+1}^{(2,0)}(\div, K; \mr^3) \cap \ker(\div)$. According to the exactness of \eqref{dct_deRham}, there exists $\bm v \in \B_{k+2}^{(3,1,0)}(K;\mr^3)$ such that $\bm u = \curl \bm v$. Let $\tilde{\bta} := \bta - \dev\grad \bm v$, we obtain that
    \begin{align*}
        2 \vskw \tilde{\bta} = \bm u - 2 \vskw(\dev\grad \bm v) = \bm u - \curl \bm v =0.
    \end{align*}
    Therefore, $\tilde{\bta}\in\B_{k+1}^{(2,0)}(\sym\curl,K;\ST)$. Moreover, it holds that $\sym\curl \tilde{\bta} = \sym\curl \bta = \bsi$. This proves the second statement.
    
    Last, by the exactness of the $\div\div$ bubble complex \eqref{dct_divdivS+}, there exists $\bm v \in \B_{k+2}^{(3,1,0)}(K; \mr^3)$ such that $\bta = \dev\grad \bm v$ for any $\bta\in \B_{k+1}^{(2,0)}(\sym\curl,K;\ST)$ with $\sym\curl\bta=0$. Since $\bta$ is also symmetric, we can obtain that 
    \begin{equation*}
        0 = 2 \vskw \bta = 2 \vskw (\dev\grad \bm v) = \curl \bm v.
    \end{equation*}
    Therefore, $\bm v$ is curl free. By the exactness of the de Rham bubble complex \eqref{dct_deRham}, there exists $u \in \B_{k+3}^{(4,2,1)}(K)$ such that $\bm v = \grad u$. Hence, $\bta = \dev\grad \bm v = \dev\hess u.$ This proves the third statement.
\end{proof}

\subsection{Characterizations of the bubble function spaces}\label{subsec:charac}
This subsection presents an explicit characterization of the $H(\sym\curl;\ST)$ bubble function space and an implicit characterization of the $H(\div\div;\ST)$ bubble function space.

The basis of the $H(\sym\curl;\ST)$ bubble function space $\B_{k+1}^{(2,0)}(\sym\curl,K;\ST)$ is constructed in the following lemma. 
\begin{lemma}\label{lem:symcurl_basis}
$\B_{k+1}^{(2,0)}(\sym\curl,K;\ST)$ has the following direct sum decomposition:
    \begin{align*}
        \B_{k+1}^{(2,0)}(\sym\curl,K;\ST)=\sum_{F\in\mf(K)}b_F \B_{k-2}^{(0)}(F)(\mn\mn^T-\frac{1}{3}\mI)\oplus  b_K P_{k-3}(K;\ST).
    \end{align*}
\end{lemma}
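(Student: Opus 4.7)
The plan is to verify three points: each summand on the right lies in $\B_{k+1}^{(2,0)}(\sym\curl,K;\ST)$; the sum is direct; and every element of $\B_{k+1}^{(2,0)}(\sym\curl,K;\ST)$ admits such a decomposition. The main work lies in the last item, where the pointwise algebraic constraint $\sym(\mn\times\bta)|_F=0$ must be translated into a concrete structural description of $\bta|_F$; the remainder then follows a bubble-function argument parallel to the $\mt$-valued characterization recorded in Remark~4.6 of \cite{Hu2022d}.

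For the inclusion, $b_K P_{k-3}(K;\ST)\subset \B_{k+1}^{(2,0)}(\sym\curl,K;\ST)$ is immediate since $b_K$ kills every face trace and carries the required vertex/edge vanishing. For a face term, $\mn\mn^T-\tfrac13\mI$ is symmetric and traceless, and a short row-wise computation gives $\mn\times \mn\mn^T=0$ (each row of $\mn\mn^T$ is parallel to $\mn$) while $\mn\times\mI=-\mskw(\mn)$ is skew-symmetric; hence $\sym(\mn\times(\mn\mn^T-\tfrac13\mI))=0$ on $F$, and on faces $F'\neq F$ the factor $b_F$ vanishes. Directness is checked by restricting any vanishing combination to an arbitrary face: $b_F|_F$ is the nonvanishing cubic face bubble on the interior of $F$ and all other contributions vanish on $F$, forcing the scalar coefficient to vanish on $F$; afterwards $b_K\bm p=0$ forces $\bm p=0$.

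The crux is the third step. Given $\bta\in \B_{k+1}^{(2,0)}(\sym\curl,K;\ST)$, I expand $\bta|_F$ in the $F$-adapted basis
\[
\{\mn\mn^T-\tfrac13\mI,\ \mbt_1\mbt_1^T-\mbt_2\mbt_2^T,\ \sym(\mbt_1\mbt_2^T),\ \sym(\mbt_1\mn^T),\ \sym(\mbt_2\mn^T)\}
\]
of $\ST$ and compute $\sym(\mn\times\cdot)$ on each element using $\mn\times\mbt_1=\mbt_2$, $\mn\times\mbt_2=-\mbt_1$, and $\mn\times\mn=0$. The resulting representation vanishes only on the first basis vector and splits into two invertible $2\times 2$ blocks on the remaining four; therefore the face constraint forces $\bta|_F = q_F(\mn\mn^T-\tfrac13\mI)$ for a scalar polynomial $q_F$ on $F$. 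Transferring the edge and vertex vanishing of $\bta$ to $q_F$ (which then vanishes on $\partial F$ and to order three at each vertex of $F$) and noting that $b_F$ vanishes to order exactly two at each vertex of $F$ yields $q_F = b_F\tilde q_F$ with $\tilde q_F\in \B_{k-2}^{(0)}(F)$.

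Finally, extending each $\tilde q_F$ from $F$ to $K$ through the barycentric coordinates of $F$ and setting $\bta':=\bta-\sum_{F\in\mf(K)} b_F\tilde q_F(\mn\mn^T-\tfrac13\mI)$, the subtracted term associated with a different face $F'\neq F$ still carries the factor $b_F$, which contains a barycentric coordinate vanishing on $F'$; so $\bta'$ vanishes on every face of $K$ and admits the factorization $\bta'=b_K\bm p$ with $\bm p\in P_{k-3}(K;\ST)$, completing the decomposition. The principal technical obstacle I foresee is the explicit kernel computation of $\sym(\mn\times\cdot)$ on the five-dimensional space $\ST$; once this is executed, the rest of the argument is essentially a routine bubble-function manipulation.
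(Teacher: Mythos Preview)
Your argument is correct. The kernel computation of $\sym(\mn\times\cdot)$ on $\ST$ is right (the two $2\times2$ blocks you describe are indeed invertible), the transfer of the $(2,0)$ vanishing to the scalar face coefficient $q_F$ is sound, and the final peeling-off step is the standard bubble argument.

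The paper's proof takes a shorter but less self-contained route: it quotes the already-established $\mt$-valued decomposition
\[
\B_{k+1}^{(2,0)}(\sym\curl,K;\mt)=\sum_{F\in\mf(K)}b_F \B_{k-2}^{(0)}(F)\,\mt_{F}\oplus b_K P_{k-3}(K;\mt),
\qquad \mt_F=\operatorname{span}\{\mbt_1\mn^T,\mbt_2\mn^T,\mn\mn^T-\tfrac13\mI\},
\]
observes that $\B_{k+1}^{(2,0)}(\sym\curl,K;\ST)$ is simply the symmetric part of this space, and then only has to check which elements of the three-dimensional space $\mt_F$ are symmetric (namely $\mn\mn^T-\tfrac13\mI$ alone, since the skew parts of $\mbt_1\mn^T$ and $\mbt_2\mn^T$ are independent). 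So the paper replaces your five-dimensional kernel computation by a three-dimensional one, at the cost of importing the $\mt$ result. Your approach has the merit of being completely self-contained and makes the face-restriction mechanism explicit; the paper's buys brevity by leaning on \cite[Remark~4.6]{Hu2022d}. The underlying structural content---that on each face the only admissible $\ST$ value is a scalar multiple of $\mn\mn^T-\tfrac13\mI$---is identical.
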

\begin{proof}
    It has been shown in \Cref{lem:vskw_symcurl} that the basis of $\B_{k+1}^{(2,0)}(\sym\curl,K;\mt)$ can be characterized as follows:
    \begin{align*}
        \B_{k+1}^{(2,0)}(\sym\curl,K;\mt)=\sum_{F\in\mf(K)}b_F \B_{k-2}^{(0)}(F) \mt_{F}\oplus b_k P_{k-3}(K;\mt),
    \end{align*}
    where $\mt_{F}={\rm span}\{\mbt_1\mn^T,\mbt_2\mn^T,\mn\mn^T-\frac{1}{3}\mI\}$. Since $\B_{k+1}^{(2,0)}(\sym\curl,K;\ST)$ is the subspace of $\B_{k+1}^{(2,0)}(\sym\curl,K;\mt)$ containing all symmetric functions, it suffices to show that $\ST_F={\rm span}\{\mn\mn^T-\frac{1}{3}\mI$\}. 
    
    This is accomplished by direct verification. Suppose that there exist $a,b,c\in\mr$ such that $a\mbt_1\mn^T+b\mbt_2\mn^T+c(\mn\mn^T-\frac{1}{3}\mI)$ is symmetric, then the skew-symmetric part vanishes, namely,
    \begin{align*}
        a(\mbt_1\mn^T-\mn\mbt_1^T)+b(\mbt_2\mn^T-\mn\mbt_2)=0.
    \end{align*}
    Multiplying both sides of the identity by $\mbt_1$ and $\mbt_2$ from the right yields that $a=0$ and $b=0$ respectively. This concludes the proof.
\end{proof}

To characterize the $H(\div\div;\ST)$ conforming bubble function space $\B_{k}^{(1)}(\div\div,K;\ST)$, we first decompose it into two subspaces:
\begin{enumerate}
\item[-] The kernel subspace $\B_{k}^{(1)}(\div\div,K;\ST) \cap \ker(\div\div)$,
\item[-] The quotient space $\B_{k}^{(1)}(\div\div,K;\ST) / \ker(\div\div)$.
\end{enumerate}
From the exactness of the conformal Hessian bubble complex in \Cref{thm:conformalbubble_excat}, the kernel subspace is identified as the image of the preceding bubble function space under the $\sym\curl$ operator:
\begin{equation*}
\B_{k}^{(1)}(\div\div,K;\ST) \cap \ker(\div\div) = \sym\curl\B_{k+1}^{(2,0)}(\sym\curl,K;\ST).
\end{equation*}
Then the basis of the kernel subspace is derived by applying $\sym\curl$ to the basis of $\B_{k+1}^{(2,0)}(\sym\curl,K;\ST)$ as characterized in \Cref{lem:symcurl_basis}.

Furthermore, the exactness of the bubble complex establishes a canonical isomorphism for the quotient space:
\begin{equation*}
    \B_{k}^{(1)}(\div\div,K;\ST) / \ker(\div\div) \cong \div\div\B_k^{(1)}(\div\div,K;\ST)=P_{k-2}(K) \cap P_1^{+,\perp}.
\end{equation*}
From this isomorphism, a dual basis for the quotient space is given by:
\begin{equation*}
    \{( \div\div\cdot,q) \colon \forall q \in P_{k-2}(K) \cap P_1^{+,\perp} \}.
\end{equation*}
These dual basis functionals will be used to define the degrees of freedom for the $H(\div\div;\ST)$ conforming finite element space in \Cref{subsec:divdiv}.

\section{Discrete complexes in two dimensions}\label{sec:trace}

This section provides a conforming discretization of the two-dimensional conformal strain complex \eqref{eq:complex2Db} and the two-dimensional de Rham complex \eqref{eq:complex2Da}. The main focus is on the two-dimensional bubble complexes, which serve as building blocks in the construction of the three-dimensional discrete complexes. 

We begin by defining the associated two-dimensional bubble function spaces. Let $\bm r$ denote the index of enhanced smoothness, i.e. $\bm r$ can be of the form $(r_1)$ or $(r_1,r_2)$ for non-negative integers $r_1,r_2$. The $H^1(\rot_F\rot_F;\mathbb{X})$ bubble function space is defined as
\begin{equation*}
    \B_{k}^{\bm r}(\rot_F\rot_F,F;\mathbb{X}):=\{\bta \in \B_{k}^{\bm r}(F;\mathbb{X})  \colon \mbt^T\bta\mbt|_e = -\partial_{\mbt}(\mn^T\bta\mbt)+\mbt^T\rot_F\bta|_e = 0,\,\forall e\in\me(F)\},
\end{equation*}
with $\mathbb{X}$ being either $\ms_2$ or $\ms_2\cap\mt_2$.

For the $H^1(\rot_F;\mr^2)$ bubble function space, we introduce an enhanced smoothness index of the form $(r_1,+)$ or $(r_1,r_2+)$ with $r_1,r_2\in \mathbb{N}$. The corresponding spaces are defined as
\begin{align*}
    \B_{k}^{(r_1,+)}(\rot_F,F;\mr^2)&:=\{\bm{u} \in \B_{k}^{(r_1)}(F;\mr^2)  \colon \bm{u}\cdot\mbt|_e = \rot_F\bm{u}|_e = 0,\,\forall e\in\me(F)\},\\
    \B_{k}^{(r_1,r_2+)}(\rot_F,F;\mr^2)&:=\{\bm{u} \in \B_{k}^{(r_1,r_2)}(F;\mr^2)  \colon \bm{u}\cdot\mbt|_e = \rot_F\bm{u}|_e = 0,\,\forall e\in\me(F)\}.
\end{align*} 
Here, the "+" notation denotes the $H^1$ regularity of $\rot_F \bm{u}$.

We then introduce two families of bubble complexes, which are used for the discretization of \eqref{eq:complex2Db} and \eqref{eq:complex2Da}.
The following bubble complexes are exact, as shown in \cite{hu2023construction, Hu2022d}:
\begin{enumerate}
\item The de Rham bubble complex in two dimensions:\begin{equation}\label{derham:B}
    0 \xrightarrow[]{\subset}  \B_{k+2}^{(2,0)}(F) \xrightarrow[]{\grad_F} \B_{k+1}^{(1,+)}(\rot_F,F;\mr^2) \xrightarrow[]{\rot_F} \B_{k}^{(0,0)}(F) \cap P_0(F)^{\perp}\xrightarrow[]{}0.
\end{equation}
\item The strain bubble complex in two dimensions: \begin{equation}\label{strain:B}
\begin{aligned}
    0\xrightarrow[]{\subset} \B_{k+2}^{(2,0)}(F;\mr^2) \xrightarrow[]{\sym\grad_F} &\B_{k+1}^{(1)}(\rot_F\rot_F,F;\ms_2)\\&\qquad\xrightarrow[]{\rot_F\rot_F} P_{k-1}(F) \cap P_1(F)^{\perp}\xrightarrow[]{}0.
\end{aligned}
\end{equation}
\end{enumerate}

\subsection{Finite element conformal strain complex in two dimensions}\label{subsec:conformal_strain}
This subsection constructs the finite elements with respect to the conformal strain complex \eqref{eq:complex2Db}.

The shape function space of the $H^3$ conforming finite element is $P_{k+3}(F)$ with $k\geq 6$. For $p \in P_{k+3}(F)$, the following degrees of freedom coincide with \cite{hu2023construction,MR282540}:
\begin{subequations}
\begin{align}
    D^{\alpha}p(x) &~~ \text{for all}\; x \in \mv(F), \, |\alpha| \leq 4; \label{defcurl1:1}\\
    (p,q)_e &~~ \text{for all}\; q \in P_{k-7}(e), \, e \in \me(F); \label{defcurl1:2}\\
    (\frac{\partial p}{\partial \mn}, q)_e &~~ \text{for all}\; q \in P_{k-6}(e), \, e\in\me(F); \label{defcurl1:3}\\
    (\frac{\partial^2 p}{\partial \mn^2}, q)_e &~~ \text{for all}\; q \in P_{k-5}(e), \, e\in\me(F); \label{defcurl1:4}\\
    (p,q)_F &~~ \text{for all}\; q\in P_{k-6}(F).\label{defcurl1:5}
\end{align}
\end{subequations}

The shape function space of the $H^1(\rot_F\rot_F;\ms_2\cap\mt_2)$ conforming finite element is $P_{k+1}(F;\ms_2 \cap \mt_2)$ with $k\geq 4$. For $\bm \tau \in P_{k+1}(F; \mathbb S_2 \cap \mathbb T_2)$, the degrees of freedom are given as follows:
\begin{subequations}
\begin{align}
    D^{\alpha}\bta(x) &~~ \text{for all}\; x \in \mv(F), \, |\alpha| \leq 2; \label{defcurl2:1}\\
    (\bta,\bm{\xi})_e &~~ \text{for all}\; \bm{\xi} \in P_{k-5}(e;\ms_2 \cap \mt_2), \, e \in \me(F); \label{defcurl2:2}\\
    (-\partial_{\mbt}(\mn^T \bta \mbt)+\mbt^T \rot_F \bta, q)_e &~~ \text{for all}\; q \in P_{k-4}(e), \, e\in\me(F); \label{defcurl2:3}\\
    (\bta,\bm{\xi})_F &~~ \text{for all}\; \bm{\xi}\in \dev \curl_F \curl_F \B_{k-1}^{(0)}(F) \label{defcurl2:4}\\ 
     & \phantom{\text{for all}\; \bm{\xi}\in \dev \curl_F}+\sym\grad_F \curl_F \B_{k+3}^{(4,2)}(F). \notag
\end{align}
\end{subequations}

 An integration by parts shows that the two spaces in the degrees of freedom \eqref{defcurl2:4} are orthogonal to each other under the $L^2$ inner product, indicating that they form a direct sum.
\begin{lemma}\label{uni:rotrot}
    The degrees of freedom \eqref{defcurl2:1}-\eqref{defcurl2:4} are unisolvent for $P_{k+1}(F;\ms_2 \cap \mt_2)$.
\end{lemma}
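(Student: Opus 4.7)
My plan for \Cref{uni:rotrot} is the standard two-step strategy: first count the DOFs and verify that the total equals $\dim P_{k+1}(F;\ms_2\cap\mt_2)=(k+2)(k+3)$, using $\ker(\dev\curl_F\curl_F)=\ker(\sym\grad_F\curl_F)=P_1^+$ and the $L^2$-orthogonality of the two interior test spaces remarked before the statement; then show that the vanishing of all DOFs \eqref{defcurl2:1}--\eqref{defcurl2:4} forces $\bta=0$. The substance of the argument is the kernel step, which I describe below.

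First I would reduce $\bta$ to the bubble space with enhanced vertex regularity. The vertex DOFs \eqref{defcurl2:1} force $\bta|_e$ to vanish to order three at each endpoint of every $e\in\me(F)$, so $\bta|_e=\lambda_1^3\lambda_2^3\tilde\bta$ with $\tilde\bta\in P_{k-5}(e;\ms_2\cap\mt_2)$; pairing against the full test space in \eqref{defcurl2:2} (with positive weight $\lambda_1^3\lambda_2^3$) then forces $\bta|_e=0$. Using $\bta|_e=0$, the term $\partial_{\mbt}(\mn^T\bta\mbt)|_e$ in \eqref{defcurl2:3} vanishes automatically, and the DOF reduces to $(\mbt^T\rot_F\bta,q)_e=0$. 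The vertex DOFs further force $\mbt^T\rot_F\bta|_e$ to have a double zero at each endpoint (since it involves only first derivatives of $\bta$), so $\mbt^T\rot_F\bta|_e=\lambda_1^2\lambda_2^2 p$ with $p\in P_{k-4}(e)$, and testing against $P_{k-4}(e)$ forces $p=0$. At this stage $\bta\in\B_{k+1}^{(2)}(\rot_F\rot_F,F;\ms_2\cap\mt_2)$.

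Next I would invoke the exactness of the conformal strain bubble complex
\[
0\to \B_{k+3}^{(4,2)}(F)\xrightarrow{\sym\grad_F\curl_F}\B_{k+1}^{(2)}(\rot_F\rot_F,F;\ms_2\cap\mt_2)\xrightarrow{\rot_F\rot_F} P_{k-1}(F)\cap (P_1^+)^\perp \to 0,
\]
which follows by a BGG diagram chase from \eqref{strain:B} and \eqref{derham:B} in the spirit of \eqref{eq:defcurlBGG}. Two successive integrations by parts---with all boundary terms killed by $\bta|_e=0$, $\mbt^T\rot_F\bta|_e=0$, and the order-two vertex regularity of $\bta$---yield $(\bta,\dev\curl_F\curl_F\psi)_F=(\rot_F\rot_F\bta,\psi)_F$ for every $\psi\in\B_{k-1}^{(0)}(F)$. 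The linear-algebraic identity $\dim(P_1^+\cap\B_{k-1}^{(0)}(F))=1$ implies $\B_{k-1}^{(0)}(F)+P_1^+=P_{k-1}(F)$, so $\rot_F\rot_F\bta\in P_{k-1}(F)\cap(P_1^+)^\perp$ being $L^2$-orthogonal to $\B_{k-1}^{(0)}(F)$ forces $\rot_F\rot_F\bta=0$. By the exactness above, $\bta=\sym\grad_F\curl_F\phi$ for some $\phi\in\B_{k+3}^{(4,2)}(F)$; testing \eqref{defcurl2:4} against $\bm\xi=\sym\grad_F\curl_F\phi$ itself then yields $\|\sym\grad_F\curl_F\phi\|_{L^2(F)}^2=0$, hence $\bta=0$.

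The main obstacle will be the interior Green's identity for $(\bta,\curl_F\curl_F\psi)_F$ acting on a symmetric traceless tensor $\bta$: I must list exactly which edge and vertex contributions appear and confirm that each is annihilated by the boundary and vertex conditions already extracted. A secondary technical point is setting up the BGG diagram chase so that the conformal strain bubble complex above is exact with the specific smoothness indices $(4,2)$ at the $H^3$ level and $(2)$ at the tensor level.
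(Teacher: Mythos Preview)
Your overall strategy---reduce to the bubble space, kill $\rot_F\rot_F\bta$ via the first summand of \eqref{defcurl2:4}, then write $\bta=\sym\grad_F\curl_F\phi$ with $\phi\in\B_{k+3}^{(4,2)}(F)$ and finish with the second summand---matches the paper's. Your reduction to the bubble space and the integration-by-parts argument for $\rot_F\rot_F\bta=0$ are correct (and more detailed than the paper's terse ``\eqref{defcurl2:4} implies $\rot_F\rot_F\bta=0$'').

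The substantive difference is how you reach $\bta=\sym\grad_F\curl_F\phi$. You invoke exactness of the conformal strain bubble complex and say it ``follows by a BGG diagram chase from \eqref{strain:B} and \eqref{derham:B}.'' This is where the work hides. The complexes \eqref{strain:B} and \eqref{derham:B} as stated carry smoothness indices $(2,0)/(1)$ and $(2,0)/(1,+)/(0,0)$, which do not line up into a BGG diagram whose output has indices $(4,2)$ and $(2,0)$; you would need enhanced-smoothness versions of both, plus surjectivity of the connecting maps on the bubble spaces. The paper does not take this shortcut: it uses \eqref{strain:B} directly to get $\bta=\sym\grad_F\bm p$ with $\bm p\in\B_{k+2}^{(2,0)}(F;\mr^2)$, then performs an explicit regularity upgrade (showing the third derivatives of $\bm p$ vanish at vertices and the first derivatives vanish on edges, so $\bm p\in\B_{k+2}^{(3,1)}(F;\mr^2)$), uses $\Tr\bta=0\Rightarrow\div_F\bm p=0$, and finally cites an enhanced de~Rham bubble complex to obtain $\bm p=\curl_F q$ with $q\in\B_{k+3}^{(4,2)}(F)$. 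In other words, what you call ``a BGG diagram chase'' is precisely the content of the paper's hands-on argument; setting up the BGG correctly would require the same ingredients. Note also that the paper proves the conformal strain bubble exactness (\Cref{lem:conformal_strain_exact}) \emph{as a consequence of} the present lemma, so invoking that exactness here requires an independent proof, which is the gap you yourself flag as the ``secondary technical point.''

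One minor slip: your middle bubble space should carry index $(2,0)$, not $(2)$---you established $\bta|_e=0$ in your first paragraph, so the edge index is there, and the $\rot_F\rot_F$ image then lands in $\B_{k-1}^{(0)}(F)$, not all of $P_{k-1}(F)$.
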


\begin{proof}
    It can be proved that
    $
        \B_{k+3}^{(4,2)}(F)=(\lambda_1\lambda_2\lambda_3)^3 P_{k-6}(F).
    $ 
    Therefore, the number of degrees of freedom is equal to the dimension of $P_{k+1}(F;\mr^2)$, namely, 
    \begin{equation*}
        36+6(k-4)+3(k-3)+\frac{k(k+1)}{2}-7+\frac{(k-4)(k-5)}{2}=(k+2)(k+3).
    \end{equation*}
    Suppose \eqref{defcurl2:1}-\eqref{defcurl2:4} vanish for some $\bta \in P_{k+1}(F;\ms_2 \cap \mt_2)$, then $\bta \in \B_{k+1}^{(1)}(\rot_F \rot_F,F;\ms_2)$. Note that \eqref{defcurl2:4} implies that  $\rot_F\rot_F \bta=0$. From the exactness of the bubble complex in \eqref{strain:B}, we can derive that 
    \begin{equation*}
        \bta=\sym\grad_F \bm p
    \end{equation*}
    for some $\bm p = (p_1, p_2)^T \in \B_{k+2}^{(2,0)}(F;\mr^2)$. 
    
   From the degrees of freedom \eqref{defcurl2:1}, the second-order derivatives of $\sym\grad_F\bm{p}=\bta$ vanish at the vertices. Namely, the second-order derivatives of $\partial_x p_1$, $\partial_y p_2$ and $\partial_y p_1 + \partial_x p_2$ vanish at the vertices of $F$. To show that $\bm{p}$ has vanishing third-order derivatives at the vertices, it suffices to show that $\partial_y^3p_1(\bm{a})=\partial_x^3p_2(\bm{a})=0$ for any $\bm{a}\in\mv(F)$. An algebraic elimination yields that
    \begin{equation*}
       \partial_y^3 p_1(\bm{a})=\partial_y^2(\partial_y p_1 + \partial_x p_2)(\bm{a})-\partial_x \partial_y(\partial_y p_2)(\bm{a})=0
    \end{equation*}
    for any $\bm{a} \in \mv(F)$. Similarly, $\partial_x^3 p_2$ vanishes at these vertices.  

    Since $\bm p$ vanishes on $\partial F$, we can rewrite $\bm p = \lambda_1 \lambda_2 \lambda_3 \tilde{\bm p}$ for some $\tilde{\bm p} \in P_{k-1}(F;\mr^2)$. The degrees of freedom \eqref{defcurl2:2} imply that $\bta$ vanishes on $\partial F$, so $\tilde{\bm p} = 0$ on $\partial F$. Hence, the first-order derivatives of $\bm p$ vanish on $\partial F$, and thus $\bm p \in \B_{k+2}^{(3,1)}(F; \mr^2)$.
    
    Moreover, we have $
        \div_F \bm p=\Tr \bta =0.$ 
    From the exactness of the de Rham bubble complex \cite[Corollary 4.2]{Chen2022d}, there exists $q \in \B_{k+3}^{(4,2)}(F)$ such that $
        \bm p=\curl_F q.$
    Thus the degrees of freedom \eqref{defcurl2:4} leads to $\bta=\sym\grad_F\curl_F q=0$. This concludes the proof.
\end{proof}

The shape function  space of the $L^2$ element is $P_{k-1}(F)$ with $k\geq 2$. For $p \in P_{k-1}(F)$, the degrees of freedom are given as follows:
\begin{subequations}
\begin{align}
    p(x) &~~ \text{for all}\; x \in \mv(F); \label{defcurl3:1}\\
    (p,q)_F &~~ \text{for all}\; q \in \B_{k-1}^{(0)}(F).\label{defcurl3:2}
\end{align}
\end{subequations}

Note that $\B_{k+3}^{(4,2)}(F)$ is exactly the $H^3$ bubble function space of $P_{k+3}(F)$ with vanishing degrees of freedom \eqref{defcurl1:1}-\eqref{defcurl1:4}. Define the $H(\rot_F \rot_F;\ms_2\cap\mt_2)$ bubble function space $\B_{k+1}^{(2,0)}(\rot_F\rot_F,F;\ms_2 \cap \mt_2)$ as
\begin{align*}
    \{\bta\in \B_{k+1}^{(2,0)}(F;\ms_2 \cap \mt_2)  \colon -\partial_{\mbt}(\mn^T \bta \mbt)+\mbt^T \rot_F \bta|_e=0,\, \forall e\in\me(F)\},    
\end{align*}
which is the subspace of $P_{k+1}(F;\ms_2\cap\mt_2)$ with vanishing degrees of freedom \eqref{defcurl2:1}-\eqref{defcurl2:3}. These bubble function spaces form an exact complex, see \Cref{lem:conformal_strain_exact} below.

\begin{lemma}\label{lem:conformal_strain_exact}
    Suppose $k\geq 6$. The complex
    \begin{equation*}
        0\xrightarrow[]{\subset} \B_{k+3}^{(4,2)}(F) \xrightarrow[]{\sym\grad_F\curl_F} \B_{k+1}^{(2,0)}(\rot_F\rot_F,F;\ms_2 \cap \mt_2)\xrightarrow[]{\rot_F\rot_F} \B_{k-1}^{(0)}(F) \cap P_{1}^{+,\perp}\xrightarrow[]{}0
    \end{equation*}
    is exact. Here, $\B_{k-1}^{(0)}(F) \cap P_{1}^{+,\perp}$ denotes the orthogonal complement space of $P_1^+$ with respect to $\B_{k-1}^{(0)}(F) $ under the $L^2$ inner product.
\end{lemma}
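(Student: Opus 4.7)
The plan is to verify five items in order: well-definedness of both maps, the complex property, injectivity of $\sym\grad_F\curl_F$ on $\B_{k+3}^{(4,2)}(F)$, exactness at the middle space, and surjectivity of $\rot_F\rot_F$. Well-definedness of the first map: the vertex order $2$ and edge vanishing of $\sym\grad_F\curl_F u$ transfer directly from the smoothness of $u$, and the extra edge condition $\mbt^T\rot_F(\sym\grad_F\curl_F u)|_e=0$ reduces, via the identities $\rot_F\sym\grad_F=\tfrac12\grad_F\rot_F$ and $\rot_F\curl_F=\Delta$, to $\partial_\mbt\Delta u|_e=0$, which is automatic since $u$ has edge smoothness $r_2=2$. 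For the second map, vertex vanishing of $\rot_F\rot_F\bta$ is immediate; orthogonality to $P_1^+$ follows from Green's identity $(\rot_F\rot_F\bta,q)_F=(\bta,\curl_F\curl_F q)_F+(\text{boundary})$, the direct computation $\curl_F\curl_F q = c\bm I$ for every $q\in P_1^+$ (pairing to zero against the traceless $\bta$), and vanishing of boundary terms under the bubble conditions $\mbt^T\bta\mbt|_e=0$ and $\mbt^T\rot_F\bta|_e=0$. The complex property is then $\rot_F\rot_F\sym\grad_F\curl_F u=\tfrac12\rot_F\grad_F\Delta u=0$. Injectivity: if $\sym\grad_F\curl_F u=0$ then $\curl_F u$ is a planar infinitesimal rigid motion $\bm a+b(-y,x)^T$, whose vanishing at the three vertices (forced by the vertex smoothness of $u$) pins down $\bm a=0$ and $b=0$; hence $\grad u=0$ and $u(\bm x_i)=0$ yield $u\equiv 0$.

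Exactness at the middle is the main obstacle. Given $\bta$ with $\rot_F\rot_F\bta=0$, the strain bubble complex \eqref{strain:B} produces $\bm p\in\B_{k+2}^{(2,0)}(F;\mr^2)$ with $\sym\grad_F\bm p=\bta$. Tracelessness of $\bta$ gives $\div_F\bm p=0$, so there exists a polynomial $u$ of degree $\le k+3$ with $\curl_F u=\bm p$. The task is to verify $u\in\B_{k+3}^{(4,2)}(F)$. On each edge $e$, the identities $\bm p\cdot\mbt=-\partial_\mn u$ and $\bm p\cdot\mn=\partial_\mbt u$ together with $\bm p|_e=0$ give $\partial_\mn u=\partial_\mbt u=0$, so $u$ is edge-constant and may be normalized to $u|_{\partial F}=0$. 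The extra input $\bta|_e=0$ provides $(\sym\grad_F\bm p)_{\mbt\mn}|_e=\tfrac12\partial_\mn(\bm p\cdot\mbt)|_e=-\tfrac12\partial_\mn^2 u|_e=0$, so $u$ vanishes to order $2$ on every edge. Polynomial divisibility then forces $u=b_F^3 q$ for some $q\in P_{k-6}(F)$; since $b_F^3$ vanishes to order $6$ at every vertex of $F$, the required vertex smoothness $r_1=4$ holds automatically.

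Surjectivity of $\rot_F\rot_F$ follows by a dimension count. The unisolvency Lemma \ref{uni:rotrot} gives $\dim\B_{k+1}^{(2,0)}(\rot_F\rot_F,F;\ms_2\cap\mt_2)=(k-1)(k-3)$ and $\dim\B_{k+3}^{(4,2)}(F)=(k-4)(k-5)/2$. Combining with injectivity and middle exactness, the image of $\rot_F\rot_F$ has dimension $(k-1)(k-3)-(k-4)(k-5)/2=(k^2+k-14)/2$. This matches $\dim(\B_{k-1}^{(0)}(F)\cap P_1^{+,\perp})=\dim\B_{k-1}^{(0)}(F)-4=(k^2+k-14)/2$, where the codimension $4$ uses the linear independence of the four $L^2$-pairings with $1,x,y,\bm x^T\bm x$ on the codim-three subspace $\B_{k-1}^{(0)}(F)\subset P_{k-1}(F)$ (no nonzero element of $P_1^+$ is $L^2$-orthogonal to $\B_{k-1}^{(0)}(F)$ for $k\ge 6$). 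The dimensions coincide, so $\rot_F\rot_F$ is surjective onto its target.
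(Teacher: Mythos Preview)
Your proof is correct and follows essentially the same strategy as the paper: middle exactness via the strain bubble complex \eqref{strain:B} combined with tracelessness to produce a curl potential, then a dimension count for surjectivity. The paper's proof is terser because it simply invokes the argument already carried out inside the unisolvency proof of Lemma~\ref{uni:rotrot}, whereas you reprove that step in a self-contained way; in particular, where the paper first upgrades $\bm p$ to $\B_{k+2}^{(3,1)}(F;\mr^2)$ and then cites an external de~Rham bubble exactness result to obtain $q\in\B_{k+3}^{(4,2)}(F)$, you instead construct $u$ directly and use the divisibility $u=b_F^3 q$ to land in $\B_{k+3}^{(4,2)}(F)$, which is a cleaner and more elementary finish.
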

\begin{proof}
    Recall that in \Cref{uni:rotrot} we have shown that for $\bta\in P_{k+1}(F;\ms_2\cap\mt_2)$, there exists $ q\in\B_{k+3}^{(4,2)}(F)$ satisfying
    \begin{equation*}
        \sym\grad_F\curl_F q=\bta,
    \end{equation*}
    provided that the degrees of freedom \eqref{defcurl2:1}-\eqref{defcurl2:3} vanish for $\bta$ and $\rot_F\rot_F\bta=0$. This actually proves that 
    \begin{equation*}
        \B_{k+1}^{(2,0)}(\rot_F\rot_F,F;\ms_2 \cap \mt_2) \cap \ker(\rot_F\rot_F) = \sym\grad_F\curl_F \B_{k+3}^{(4,2)}(F).
    \end{equation*}
    Then, by dimension counting we can obtain
    \begin{align*}
        &\dim (\rot_F\rot_F \B_{k+1}^{(2,0)}(\rot_F\rot_F,F;\ms_2 \cap \mt_2))\\
        = &\dim \B_{k+1}^{(2,0)}(\rot_F\rot_F,F;\ms_2 \cap \mt_2) - \dim \B_{k+3}^{(4,2)}(F) \\
        = &\frac{1}{2}k(k+1)-7=\dim  \B_{k-1}^{(0)}(F)-4.
    \end{align*}
    Since the degrees of freedom \eqref{defcurl2:1}-\eqref{defcurl2:3} imply that $\rot_F\rot_F \B_{k+1}^{(2,0)}(\rot_F\rot_F,F;\ms_2 \cap \mt_2) \subseteq \B_{k-1}^{(0)}(F) \cap P_{1}^{+,\perp}$, the above identity concludes the proof.
\end{proof}

Given a bounded Lipschitz contractible polygonal domain $F \subset \mr^2$, the finite elements for the conformal strain complex \eqref{eq:complex2Db} can be constructed with the above shape function spaces and degrees of freedom. It can be verified that the discrete conformal strain complex is exact. The proof follows similar arguments as those in \cite[Lemma 3.10]{chen2020finite} and is omitted here. 

\subsection{Finite element de Rham complex in two dimensions}
This subsection constructs the finite elements with respect to the de Rham complex \eqref{eq:complex2Da} in two dimensions.

The shape function space of the $H^2$ conforming element is $P_{k+2}(F)$ with $k \geq 5$. For $p \in P_{k+2}(F)$, the degrees of freedom of the $H^2$ conforming element with enhanced continuity at vertices are defined as follows:
\begin{subequations}
\begin{align}
    D^{\alpha}p(x) &~~ \text{for all}\; x \in \mv(F), \, |\alpha| \leq 3; \label{derham1:1}\\
    (p,q)_e &~~ \text{for all}\; q \in P_{k-6}(e), \, e \in \me(F); \label{derham1:2}\\
    (\frac{\partial p}{\partial \mn}, q)_e &~~ \text{for all}\; q \in P_{k-5}(e), \, e\in\me(F); \label{derham1:3}\\
    (p,q)_F &~~ \text{for all}\; q\in P_{k-4}(F).\label{derham1:4}
\end{align}
\end{subequations}
The unisolvency of the degrees of freedom \eqref{derham1:1}-\eqref{derham1:4} can be proved by a standard argument and the details are omitted here. 

The shape function space of the $H^1(\rot_F;\mr^2)$ conforming element is $P_{k+1}(F)$ with $k\geq 4$, and the degrees of freedom read as follows:

\begin{subequations}
\begin{align}
    D^{\alpha}\bm{u}(x) &~~ \text{for all}\; x \in \mv(F), \, |\alpha| \leq 2; \label{derham2:1}\\
    (\bm{u},\bm{v})_e &~~ \text{for all}\; \bm{v} \in P_{k-5}(e;\mr^2), \, e \in \me(F); \label{derham2:2}\\
    (\rot_F \bm{u}, q)_e &~~ \text{for all}\; q \in P_{k-4}(e), \, e\in\me(F); \label{derham2:3}\\
    (\bm{u},\bm{v})_F &~~ \text{for all}\; \bm v\in \curl_F P_{k-3}(F)+\grad_F \B_{k+2}^{(3,1)}(F).\label{derham2:4}
\end{align}
\end{subequations}
An integration by parts shows that the two spaces in the degrees of freedom \eqref{derham2:4} are orthogonal, implying that  the associated degrees of freedom are linearly independent.
\begin{lemma}\label{uni:rot}
    The degrees of freedom \eqref{derham2:1}-\eqref{derham2:4} are unisolvent for $P_{k+1}(F;\mr^2)$.
\end{lemma}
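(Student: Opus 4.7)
The plan is to mirror the strategy already used in \Cref{uni:rotrot}: first confirm that the total count of degrees of freedom equals $\dim P_{k+1}(F;\mr^2)=(k+2)(k+3)$, then suppose that all of \eqref{derham2:1}--\eqref{derham2:4} vanish on some $\bm u\in P_{k+1}(F;\mr^2)$ and use the exactness of the de Rham bubble complex \eqref{derham:B} to conclude $\bm u=0$. For the counting step I would use, analogous to the identity $\B_{k+3}^{(4,2)}(F)=(\lambda_1\lambda_2\lambda_3)^3P_{k-6}(F)$ invoked in the previous lemma, the identity $\B_{k+2}^{(3,1)}(F)=(\lambda_1\lambda_2\lambda_3)^2 P_{k-4}(F)$ together with the $L^2$-orthogonality of $\curl_F P_{k-3}(F)$ and $\grad_F \B_{k+2}^{(3,1)}(F)$, arriving at $36+6(k-4)+3(k-3)+\frac{(k-2)(k-1)}{2}-1+\frac{(k-3)(k-2)}{2}=(k+2)(k+3)$.

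For the null-space argument, I would first exploit the vertex DOFs \eqref{derham2:1} and the edge moments \eqref{derham2:2}: the vertex conditions force $\bm u|_e$ to vanish together with its first and second tangential derivatives at both endpoints, so $\bm u|_e=s^3(|e|-s)^3\bm p$ for some $\bm p\in P_{k-5}(e;\mr^2)$; testing the full edge moment against $\bm p$ then gives $\bm u|_e=0$ on every edge (not merely $\bm u\cdot\mbt|_e=0$). Because $\partial_{\mbt}\bm u|_e=0$ already, the DOF \eqref{derham2:3} reduces to $\mbt\cdot\partial_{\mn}\bm u|_e=0$, and a standard two-fold factorization argument gives $\rot_F\bm u|_e=0$ on each edge as well. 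Next, I would test \eqref{derham2:4} with $\bm v=\curl_F q$ for $q\in P_{k-3}(F)$ and integrate by parts; the boundary contribution vanishes since $\bm u|_{\partial F}=0$, yielding $(\rot_F\bm u,q)_F=0$ for all $q\in P_{k-3}(F)$. Since $\rot_F\bm u\in P_k(F)$ vanishes on $\partial F$, writing $\rot_F\bm u=\lambda_1\lambda_2\lambda_3 r$ with $r\in P_{k-3}(F)$ and testing against $r$ forces $\rot_F\bm u\equiv 0$.

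By Poincar\'e in the simply connected triangle $F$, there exists a scalar potential $\tilde p\in P_{k+2}(F)$ with $\bm u=\grad_F\tilde p$. The conclusion then follows from testing \eqref{derham2:4} with $\bm v=\grad_F\tilde p$ itself: after integration by parts the boundary term vanishes and one obtains $\|\grad_F\tilde p\|_{0,F}^2=0$, hence $\bm u=0$. For this last step to be legitimate I must verify $\tilde p\in\B_{k+2}^{(3,1)}(F)$. The edge index~$1$ comes from $\bm u|_{\partial F}=0$: $\grad_F\tilde p=0$ on $\partial F$ makes $\tilde p$ constant along each edge, hence constant on the connected curve $\partial F$, and after normalizing this constant to zero one has $\tilde p|_{\partial F}=0$ together with $\partial_{\mn}\tilde p|_e=\bm u\cdot\mn|_e=0$. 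The vertex index~$3$ is immediate from the fact that $\grad_F\tilde p=\bm u$ has second-order derivatives vanishing at every vertex.

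The main obstacle is precisely this regularity bookkeeping for $\tilde p$: the enhanced vertex and edge smoothness of $\bm u$ must be lifted by one order through the $\grad_F$ operator so that $\tilde p$ lands in the exact bubble class $\B_{k+2}^{(3,1)}(F)$ appearing in \eqref{derham2:4}. This parallels the algebraic elimination performed for the second-order vertex derivatives in the proof of \Cref{uni:rotrot}; here the corresponding step is the factorization of $\tilde p$ along each edge using the two vanishing conditions $\partial_{\mbt}\tilde p|_e=0$ and $\partial_{\mn}\tilde p|_e=0$. Once this identification is in place, taking $p=\tilde p$ in the second family of DOFs in \eqref{derham2:4} closes the argument.
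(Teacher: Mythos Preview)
Your proposal is correct and follows essentially the same route as the paper: identical dimension count via $\B_{k+2}^{(3,1)}(F)=(\lambda_1\lambda_2\lambda_3)^2P_{k-4}(F)$, then the vanishing of $\bm u$ and $\rot_F\bm u$ on $\partial F$ from \eqref{derham2:1}--\eqref{derham2:3}, the factorization $\rot_F\bm u=\lambda_1\lambda_2\lambda_3 r$ killed by the $\curl_F$-part of \eqref{derham2:4}, and finally a potential $p\in\B_{k+2}^{(3,1)}(F)$ killed by the $\grad_F$-part. The only cosmetic differences are that the paper invokes the bubble complex \eqref{derham:B} to obtain $p\in\B_{k+2}^{(2,0)}(F)$ before upgrading to $\B_{k+2}^{(3,1)}(F)$ (you use Poincar\'e and normalize the constant, which is equivalent), and that your final step needs no integration by parts: $(\bm u,\grad_F\tilde p)_F=\|\grad_F\tilde p\|_{0,F}^2$ directly.
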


\begin{proof}
    It can be verified that $
        \B_{k+ 
   2}^{(3,1)}(F)=(\lambda_1\lambda_2\lambda_3)^2 P_{k-4}(F). $ 
     Therefore, the number of degrees of freedom is equal to the dimension of $P_{k+1}(F;\mr^2)$, namely,
    \begin{equation*}
        36+6(k-4)+3(k-3)+\frac{(k-1)(k-2)}{2}-1+\frac{(k-2)(k-3)}{2}=(k+2)(k+3).
    \end{equation*}
    Suppose that \eqref{derham2:1}-\eqref{derham2:4} vanish for some $\bm u \in P_{k+1}(F;\mr^2)$. Then it holds that $\bm u \in \B_{k+1}^{(1,+)}(\rot_F,F;\mr^2)$ and $\rot_F \bm u$ vanishes on $\partial F$. This shows $
        \rot_F \bm u=\lambda_1\lambda_2\lambda_3 r$ 
    for some $r\in P_{k-3}(F)$. For any $q \in P_{k-3}(F)$, an integration by parts leads to
    \begin{equation*}
        \int_{F} \bm u \cdot \curl_F q=-\int_F \lambda_1\lambda_2\lambda_3 rq.
    \end{equation*}
    Since \eqref{derham2:4} vanishes for $\bm{u}$, it follows that $ r = 0$, thus $\rot_F\bm u=0$. 
    From the exactness of the bubble complex \eqref{derham:B}, we can derive that $
        \bm u=\grad_F p$ 
    for some $p \in \B_{k+2}^{(2,0)}(F)$. Since \eqref{derham2:1} and \eqref{derham2:2} imply that the second order derivatives of $\bm u$ vanishes at the vertices of $F$ and $\bm u$ vanishes on $\partial F$, the third order derivatives of $p$ vanish at the vertices and the first order derivatives of $p$ vanish on $\partial F$. Thus, $p\in \B_{k+1}^{(3,1)}(F)$, and the degrees of freedom \eqref{derham2:4} lead to $\bm u=\grad_F p=0$. This completes the proof.
\end{proof}

The shape function space of the scalar $H^1$ element is $P_k(F)$ with $k \geq 3$, and the degrees of freedom coincide with the Hermite element as follows:
\begin{subequations}
\begin{align}
    D^{\alpha}p(x) &~~ \text{for all}\; x \in \mv(F), \, |\alpha| \leq 1; \label{derham3:1}\\
    (p,q)_e &~~ \text{for all}\; q \in P_{k-4}(e), \, e \in \me(F); \label{derham3:2}\\
    (p,q)_F &~~ \text{for all}\; q\in P_{k-3}(F).\label{derham3:3}
\end{align}
\end{subequations}
 
Notice that the $H^2$ bubble function space of $P_{k+2}(F)$ with vanishing degrees of freedom \eqref{derham1:1}-\eqref{derham1:3} is exactly $\B_{k+2}^{(3,1)}(F)$, and the $H^1$ bubble function space of $P_{k}(F)$ with vanishing degrees of freedom \eqref{derham3:1}-\eqref{derham3:2} is $\B_{k}^{(1,0)}(F)$. Let $\B_{k+1}^{(2,0+)}(\rot_F,F;\mr^2)$ denote the $H(\rot_F,F;\mr^2)$ bubble function space of $P_{k+1}(F;\mr^2)$ with vanishing degrees of freedom \eqref{derham2:1}-\eqref{derham2:3}. These bubble functions form an exact complex as stated in the following lemma.
\begin{lemma}
    Suppose $k\geq 5$. The complex
    \begin{align*}
        0 \xrightarrow[]{\subset}  \B_{k+2}^{(3,1)}(F) \xrightarrow[]{\grad_F} \B_{k+1}^{(2,0+)}(\rot_F,F;\mr^2) \xrightarrow[]{\rot_F} \B_{k}^{(1,0)}(F) \cap P_0(F)^{\perp}\xrightarrow[]{}0
    \end{align*}
    is exact.
\end{lemma}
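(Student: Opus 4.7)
The plan mirrors the proof of \Cref{lem:conformal_strain_exact}: first verify the complex and injectivity by direct inspection, then pull back along the lower-regularity de Rham bubble complex \eqref{derham:B} to get exactness in the middle, and finally close the argument with a dimension count powered by \Cref{uni:rot}.

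For the complex property, every $p \in \B_{k+2}^{(3,1)}(F)$ has $\grad_F p$ vanishing to order $2$ at vertices and vanishing on each edge (the tangential derivative because $p|_e = 0$, and the normal derivative because $p$ already vanishes to order $1$ on edges), together with $\rot_F \grad_F p = 0$; hence $\grad_F p \in \B_{k+1}^{(2,0+)}(\rot_F, F;\mr^2)$. Similarly, for $\bm u \in \B_{k+1}^{(2,0+)}$, the quantity $\rot_F \bm u$ vanishes to order $1$ at vertices, vanishes on each edge by the definition of the \emph{+} notation, and satisfies $\int_F \rot_F \bm u = \int_{\partial F} \bm u \cdot \mbt = 0$ since $\bm u$ vanishes on $\partial F$, placing $\rot_F \bm u$ in $\B_k^{(1,0)}(F) \cap P_0(F)^\perp$. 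Injectivity of $\grad_F$ on the left is immediate: any $p \in \B_{k+2}^{(3,1)}(F)$ with $\grad_F p = 0$ is constant and vanishes on $\partial F$, hence vanishes.

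For exactness in the middle I would take $\bm u \in \B_{k+1}^{(2,0+)}$ with $\rot_F \bm u = 0$. Because $\B_{k+1}^{(2,0+)} \subseteq \B_{k+1}^{(1,+)}(\rot_F, F; \mr^2)$, the exactness of \eqref{derham:B} supplies $p \in \B_{k+2}^{(2,0)}(F)$ with $\grad_F p = \bm u$. The smoothness of $p$ is then bootstrapped: vanishing of $\grad_F p$ to order $2$ at vertices combined with $p$ itself vanishing there upgrades $p$ to order-$3$ vanishing at vertices; on each edge the tangential derivative of $p$ vanishes because $p|_e = 0$, and the normal derivative equals the normal component of $\bm u|_e$ and so also vanishes, giving $p \in \B_{k+2}^{(3,1)}(F)$.

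Surjectivity of $\rot_F$ is obtained from a dimension count. By \Cref{uni:rot} the two spaces in \eqref{derham2:4} are $L^2$-orthogonal and form a direct sum, so the bubble space $\B_{k+1}^{(2,0+)}(\rot_F, F;\mr^2)$, i.e.\ the subspace of $P_{k+1}(F;\mr^2)$ annihilated by \eqref{derham2:1}--\eqref{derham2:3}, has dimension $\dim P_{k-3}(F) - 1 + \dim \B_{k+2}^{(3,1)}(F)$. Combined with the middle exactness established above, $\dim(\rot_F \B_{k+1}^{(2,0+)}) = \dim P_{k-3}(F) - 1$. On the target side I would observe that $\B_{k}^{(1,0)}(F) = \lambda_1\lambda_2\lambda_3 P_{k-3}(F)$, because any polynomial vanishing on all edges automatically has vanishing first-order derivatives at the vertices: $\grad(\lambda_1\lambda_2\lambda_3)$ is zero at every vertex of $F$. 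Since this space contains no nonzero constant, $\dim(\B_{k}^{(1,0)}(F) \cap P_0(F)^\perp) = \dim P_{k-3}(F) - 1$, and the two dimensions agree. The main subtlety is this last identification; once it is in hand the Euler characteristic lines up and one avoids having to explicitly construct a preimage in $\B_{k+1}^{(2,0+)}$ for each $q \in \B_{k}^{(1,0)}(F) \cap P_0(F)^\perp$.
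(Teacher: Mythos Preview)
Your argument is correct and matches the paper's: the paper simply cites the bootstrap from \eqref{derham:B} already carried out inside the proof of \Cref{uni:rot} (exactly your middle step) and then performs the same dimension count. The only imprecision is your justification that $\dim(\B_{k}^{(1,0)}(F)\cap P_0(F)^{\perp}) = \dim \B_{k}^{(1,0)}(F) - 1$: containing no nonzero constant is not the right hypothesis (a subspace could already lie inside $P_0(F)^{\perp}$); what you need is that the mean-value functional is nontrivial on $\B_{k}^{(1,0)}(F)$, which is clear since $\int_F \lambda_1\lambda_2\lambda_3 > 0$.
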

\begin{proof}
    Recall that in \Cref{uni:rot} we have shown that for $\bm u\in P_{k+1}(F;\mr^2)$, if \eqref{derham2:1}-\eqref{derham2:3} vanish for $\bm u$ and $\rot_F\bm u=0$, then there exists $p\in\B_{k+2}^{(3,1)}(F)$ satisfying $
        \grad_F p=\bm u,$ 
    This actually proves that 
    \begin{equation*}
    \B_{k+1}^{(2,0+)}(\rot_F,F;\mr^2) \cap \ker(\rot_F) = \grad_F \B_{k+2}^{(3,1)}(F)
    \end{equation*}
    Then, by dimension count we can obtain that
    \begin{align*}
        \dim (\rot_F \B_{k+1}^{(2,0+)}(\rot_F,F;\mr^2)) &= \dim \B_{k+1}^{(2,0+)}(\rot_F,F;\mr^2) - \dim  \B_{k+2}^{(3,1)}(F) \\
        &= \frac{1}{2}(k-1)(k-2)-1 = \dim \B_{k}^{(1,0)}(F)-1.
    \end{align*}
    Since the degrees of freedom \eqref{derham2:1}-\eqref{derham2:3} imply that $\rot_F \B_{k+1}^{(2,0+)}(\rot_F,F;\mr^2) \subseteq \B_{k}^{(1,0)}(F) \cap P_0(F)^{\perp}$, this identity concludes the proof.
\end{proof}

Given a bounded Lipschitz contractible polygonal domain $F \subset \mr^2$, the global finite element spaces shown above form an exact discrete complex, which can be proved by similar arguments as those in \cite{hu2023construction} and the details are omitted for brevity.

\section{Discrete conformal Hessian complex in three dimensions}\label{sec:FEM3D}

This section constructs the discrete conformal Hessian complex \eqref{dct_cplx} on simplicial grids in three dimensions:
\begin{align*}
    P_{1}^{+} \xrightarrow[]{\subset}  U_{k+3,h} \xrightarrow[]{\dev\hess} \Lambda_{k+1,h} \xrightarrow[]{\sym\curl} \Sigma_{k,h} \xrightarrow[]{\div\div} P_{k-2}(\mathcal{T}_h)\xrightarrow[]{} 0.
\end{align*}
We first define the degrees of freedom for the $H(\sym\curl;\mathbb{S}\cap \mathbb{T})$ conforming finite element space $\Lambda_{k+1,h}$ in \Cref{subsec:symcurl} and the $H(\div\div;\mathbb{S}\cap\mathbb{T})$ conforming finite element space $\Sigma_{k,h}$ in \Cref{subsec:divdiv}, based primarily on the bubble complexes in \Cref{sec:bubble} and the two-dimensional complexes in \Cref{sec:trace}. Then, we construct the finite element conformal Hessian complex \eqref{dct_cplx} and prove its exactness in \Cref{subsec:conformal_Hessian}.

\subsection{$H(\sym\curl;\ST)$ conforming finite element}\label{subsec:symcurl}
The construction of the $H(\sym\curl,\Omega;\\\ST)$ finite element space is based on the bubble function space $\B_{k+1}^{(2,0)}(\sym\curl,K;\ST)$ in \Cref{lem:symcurl_basis} from \Cref{sec:bubble} and the interior degrees of freedom of the finite elements in two dimensions from \Cref{sec:trace}. 

The degrees of freedom of the $H(\sym\curl,\Omega; \ST)$ finite element space are given as follows: for $\bm \tau \in P_{k+1}(K;\ST)$ with $k\ge 6$, 
\begin{subequations}
\begin{align}
    D^{\alpha}\bta(x)&~~ \text{for all}\; x\in \mv(K), \, |\alpha| \leq 2;\label{symcurl:1} \\
    (\bta, \bm{q})_{e}&~~ \text{for all}\; \bm{q} \in P_{k-5}(e; \ST),\, e\in \me(K);\label{symcurl:2} \\
    (\mn_i^T \sym\curl \bta \mn_j, q)_{e}&~~  \text{for all}\; q \in P_{k-4}(e), \, e\in \me(K),\, i,j=1,2;\label{symcurl:3} \\
    (\mn_1^T \curl \bta \mn_2-\partial_{\mbt}(\mbt^T \bta \mbt),q)_{e}&~~  \text{for all}\; q \in P_{k-4}(e), \, e\in \me(K);\label{symcurl:4} \\
    (\Pi_F(\bta^T \mn), \bm{q})_{F}&~~  \text{for all}\; \bm{q} \in \B_{k+1}^{(2,0)}(\rot_F, F; \mr^2), \, F\in \mf(K);\label{symcurl:5} \\
    (\sym(\Pi_F(\bta\times\mn)\Pi_F), \bm{q})_{F}&~~  \text{for all}\; \bm{q} \in \B_{k+1}^{(2,0)}(\rot_F\rot_F, F; \ms_2\cap\mt_2), \, F\in \mf(K);\label{symcurl:6} \\
    (\bta, \bm{q})_{K}&~~ \text{for all}\; \bm{q} \in \B_{k+1}^{(2,0)}(\sym\curl,K;\ST).\label{symcurl:7} 
\end{align}
\end{subequations}

\begin{theorem} 
\label{thm:uni_symcurl}
The degrees of freedom \eqref{symcurl:1}-\eqref{symcurl:7} are unisolvent for $P_{k+1}(K;\ST)$ with $k\ge 6$, and the resulting finite element space defined by
\begin{align*}
    \Lambda_{k+1,h}:=&\{\bta \in L^2(\Omega; \ST) \colon  \bta|_K \in P_{k+1}(K;\ST)\;\text{for all}\; K \in \mathcal{T}_h,\\ &\text{all of the degrees of freedom \eqref{symcurl:1}-\eqref{symcurl:6} are single-valued}\}.
\end{align*}
is $H(\sym \curl;\ST)$ conforming.
\end{theorem}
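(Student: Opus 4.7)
The plan is the standard two-step argument: verify that the number of DOFs matches $\dim P_{k+1}(K;\ST) = 5\binom{k+4}{3}$, and show that vanishing of all the DOFs forces $\bta \equiv 0$. Conformity is then immediate from the discussion after \eqref{eq:Green_symcurl}, since the single-valuedness of \eqref{symcurl:1}-\eqref{symcurl:6} jointly determines the face trace $\sym(\mn \times \bta)$ on every face of $K$.

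The dimension count combines the vertex contribution $200 = 4 \cdot 10 \cdot 5$, the edge contributions from \eqref{symcurl:2}-\eqref{symcurl:4}, and the face/interior contributions obtained from $\dim \B_{k+1}^{(2,0)}(\rot_F,F;\mr^2)$ and $\dim \B_{k+1}^{(2,0)}(\rot_F\rot_F,F;\ms_2 \cap \mt_2)$ (extracted from the proofs of \Cref{uni:rot} and \Cref{uni:rotrot}) together with $\dim \B_{k+1}^{(2,0)}(\sym\curl,K;\ST)$ (from \Cref{lem:symcurl_basis}). This is a routine arithmetic check.

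For the vanishing argument, assume that all DOFs \eqref{symcurl:1}-\eqref{symcurl:7} vanish. On each edge $e$, the tangential vertex data up to order two supplied by \eqref{symcurl:1} together with the moments \eqref{symcurl:2} form a Hermite-type unisolvent set for $P_{k+1}(e;\ST)$ by the one-dimensional count $2 \cdot 15 + 5(k-4) = 5(k+2)$, so $\bta|_e = 0$ on every edge. Consequently, for each face $F \in \mf(K)$, the face traces $\Pi_F(\bta^T \mn)$ and $\sym(\Pi_F(\bta \times \mn)\Pi_F)$ vanish on $\partial F$. I would then view these two objects as 2D finite element functions in $P_{k+1}(F;\mr^2)$ and $P_{k+1}(F;\ms_2 \cap \mt_2)$ respectively, and verify that all of the 2D DOFs \eqref{derham2:1}-\eqref{derham2:4} and \eqref{defcurl2:1}-\eqref{defcurl2:4} vanish. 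The 2D vertex and edge-moment DOFs follow from \eqref{symcurl:1}-\eqref{symcurl:2}; the interior DOFs \eqref{derham2:4}, \eqref{defcurl2:4} match \eqref{symcurl:5}, \eqref{symcurl:6} directly; and the 2D edge DOFs \eqref{derham2:3} and \eqref{defcurl2:3} for both adjacent faces are jointly reproduced by the four scalar DOFs per edge provided by \eqref{symcurl:3}-\eqref{symcurl:4}, using the compatibility relations \eqref{eq:facetrace1} and the commutative diagrams of \Cref{lem:face}. Invoking \Cref{uni:rot} and \Cref{uni:rotrot} then gives $\Pi_F(\bta^T \mn) = 0$ and $\sym(\Pi_F(\bta \times \mn)\Pi_F) = 0$ on each face. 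A short direct computation shows that $\mn^T \sym(\mn \times \bta)\mn = 0$ identically, so the remaining components of $\sym(\mn \times \bta)|_F$ are determined by the two vanishing face traces, giving $\sym(\mn \times \bta)|_F = 0$.

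Hence $\bta \in \B_{k+1}^{(2,0)}(\sym\curl,K;\ST)$, and \eqref{symcurl:7} paired against the bubble basis of \Cref{lem:symcurl_basis} is a non-degenerate $L^2$ pairing, forcing $\bta = 0$. The main obstacle is the edge-level matching: checking that the three scalars in \eqref{symcurl:3} together with the specific combination $\mn_1^T \curl\bta\mn_2 - \partial_\mbt(\mbt^T\bta\mbt)$ in \eqref{symcurl:4} precisely reproduce the four 2D edge quantities $\rot_F(\Pi_F(\bta^T\mn))$ and $-\partial_\mbt(\mn_{\partial F}^T\bta_F\mbt) + \mbt^T\rot_F\bta_F$ (with $\bta_F := \sym(\Pi_F(\bta \times \mn)\Pi_F)$) on the two faces adjacent to $e$; this requires direct algebraic manipulation using \eqref{eq:facetrace1} and the frame identity $\mn_1 \times \mn_2 = \mbt$.
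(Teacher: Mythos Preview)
Your overall structure matches the paper's proof: dimension count, then show vanishing DOFs force $\bta$ into the bubble space $\B_{k+1}^{(2,0)}(\sym\curl,K;\ST)$, then \eqref{symcurl:7} finishes. The edge-level identification you flag as the ``main obstacle'' is exactly what the paper handles by citing \cite[Lemma~4.2 and Theorem~4.5]{Hu2022d}, which give $\rot_F\bm w=\mn^T\sym\curl\bta\,\mn$ and $-\partial_{\mbt_{\partial F}}(\mn_{\partial F}^T\bm\zeta\,\mbt_{\partial F})+\mbt_{\partial F}^T\rot_F\bm\zeta=-\mn_{\partial F}^T\curl\bta\,\mn-\partial_{\mbt_{\partial F}}(\mbt_{\partial F}^T\bta\,\mbt_{\partial F})$ on each edge of $F$; these are precisely the algebraic manipulations you anticipate.

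One correction: the face-interior DOFs \eqref{symcurl:5}, \eqref{symcurl:6} do \emph{not} match \eqref{derham2:4}, \eqref{defcurl2:4} directly---the test spaces are different (the bubble spaces $\B_{k+1}^{(2,0)}(\rot_F,F;\mr^2)$ and $\B_{k+1}^{(2,0)}(\rot_F\rot_F,F;\ms_2\cap\mt_2)$ themselves, rather than the $\curl_F/\grad_F$ direct sums appearing in the 2D interior DOFs). Consequently you cannot invoke \Cref{uni:rot} and \Cref{uni:rotrot} as stated. But the fix makes the argument \emph{simpler}, and is what the paper actually does: once you have shown $\bm w:=\Pi_F(\bta^T\mn)$ and $\bm\zeta:=\sym(\Pi_F(\bta\times\mn)\Pi_F)$ lie in those bubble spaces, take $\bm q=\bm w$ in \eqref{symcurl:5} and $\bm q=\bm\zeta$ in \eqref{symcurl:6}; the $L^2$ self-pairing gives $\bm w=0$, $\bm\zeta=0$ immediately, with no need for the full 2D unisolvency lemmas.
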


\begin{proof}
    Since the basis of the $H(\sym\curl;\ST)$ bubble function space has been constructed in \Cref{lem:symcurl_basis}, the dimension of $\B_{k+1}^{(2,0)}(\sym\curl;\ST)$ is 
    \begin{equation*}
        \dim \B_{k+1}^{(2,0)}(\sym\curl;\ST)=\frac{1}{6}(5k^3-3k^2-2k-72).
    \end{equation*}
    The number of degrees of freedom \eqref{symcurl:1}-\eqref{symcurl:7} is
    \begin{gather*}
        200+30(k-4)+18(k-3)+6(k-3)+2(k-2)(k-3)+2(k-1)(k-2)-4\\+2(k-4)(k-5)+2k(k+1)-28+ \frac{5k^3-3k^2-2k-72}{6}
        = \dim P_{k+1}(K;\mathbb{S}\cap\mathbb{T}).
    \end{gather*}
    
    It suffices to prove that for any $\bta\in P_{k+1}(K;\ms \cap \mt)$, if $\bta$ vanishes on all the degrees of  freedom of \eqref{symcurl:1}-\eqref{symcurl:7}, the $\bta=0$.
    
    The degrees of freedom \eqref{symcurl:1} and \eqref{symcurl:2} indicate that $\bta\in \B_{k+1}^{(2,0)}(K;\ms \cap \mt)$. 
    Given $F\in\mf(K)$, let $\bm{w}:=\Pi_F(\bta^T \mn)$ and $\bm{\zeta}:=\sym(\Pi_F(\bta\times\mn)\Pi_F)$. By \cite[Lemma 4.2]{Hu2022d}, we have
    \begin{align*}
        \rot_F \bm{w}&=\mn^T \sym\curl \bta \mn,\\
        -\partial_{\mbt_{\partial F}}(\mn_{\partial F}^T \bm{\zeta} \mbt_{\partial F})+\mbt_{\partial F}^T \rot_F \bm{\zeta}&=-\mn_{\partial F}^T \curl \bta \mn-\partial_{\mbt_{\partial F}}(\mbt_{\partial F}^T \bta \mbt_{\partial F}).
    \end{align*}
    These, together with the degrees of freedom \eqref{symcurl:3}, \eqref{symcurl:4}, indicate that $\rot_F \bm{w}=0$ and $-\partial_{\mbt_{\partial F}}(\mn_{\partial F}^T \bm{\zeta} \mbt_{\partial F})+\mbt_{\partial F}^T \rot_F \bm{\zeta}=0$ on $\partial F$ by similar arguments as those in \cite[Theorem4.5]{Hu2022d}. Hence, it follows that $\bm{w} \in \B_{k+1}^{(2,0)}(\rot_F, F; \mr^2)$ and $\bm{\zeta} \in \B_{k+1}^{(2,0)}(\rot_F\rot_F, F; \ms_2\cap\mt_2)$. A combination with \eqref{symcurl:5} and \eqref{symcurl:6} implies $\bm{w}=0$ and $\bm{\zeta}=0$ on $F$. Therefore $\bta \in \B_{k+1}^{(2,0)}(\sym\curl,K; \ST)$ and moreover equals zero by \eqref{symcurl:7}. This completes the proof.
\end{proof}

\subsection{$H(\div\div;\ST)$ conforming finite element}\label{subsec:divdiv}
The degrees of freedom of the \\$H(\div\div,\Omega; \ST)$ finite element space are stated as follows: for $\bm \sigma \in P_{k}(K;\ST)$ with $k\ge 6$,
\begin{subequations}
\begin{align}
    D^{\alpha}\bsi (x)&~~ \text{for all}\; x\in \mv(K), \, |\alpha| \leq 1; \label{divdiv:1}\\
    (\mn_i^T \bsi \mn_j, q)_e &~~ \text{for all}\; q \in P_{k-4}(e),\, e\in \me(K),\, i,j=1,2; \label{divdiv:2}\\
    (\mn^T\bsi\mn, q)_F &~~  \text{for all}\; q \in \B_{k}^{(1,0)}(F), \, F\in \mf(K);\label{divdiv:3} \\
    (2\div_F(\bsi\mn)+\partial_{\mn}(\mn^T\bsi\mn), q)_{F} &~~  \text{for all}\; q \in \B_{k-1}^{(0)}(F), \, F\in \mf(K); \label{divdiv:4}\\
    (\bsi, \dev \hess q)_{K}&~~  \text{for all}\; q \in P_{k-2}(K); \label{divdiv:5}\\
    (\bsi, \sym\curl\bta)_{K}&~~ \text{for all}\; \bta \in \B_{k+1}^{(2,0)}(\sym\curl,K;\ST). \label{divdiv:6}
\end{align}
\end{subequations}
\begin{theorem} 
\label{thm:uni_divdiv}
The degrees of freedom \eqref{divdiv:1}-\eqref{divdiv:6} are unisolvent for $P_{k}(K;\ST)$ with $k\ge 6$, and the corresponding finite element space defined by
\begin{align*}
    \Sigma_{k+1,h}:=&\{\bsi \in L^2(\Omega;\ST) \colon  \bsi|_K \in P_{k}(K;\ST)\;\text{for all}\; K \in \mathcal{T}_h,\\ &\text{all of the degrees of freedom \eqref{divdiv:1}-\eqref{divdiv:4} are single-valued}\}.
\end{align*}

is $H(\div\div;\ST)$ conforming.
\end{theorem}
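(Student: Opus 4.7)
The plan is to mirror the unisolvency pattern used for the $H(\div\div;\ms)$ element in \Cref{lem:dim_divdiv} and the $H(\sym\curl;\ST)$ element in \Cref{thm:uni_symcurl}, with the key extra ingredient being the exactness of the conformal Hessian bubble complex proved in \Cref{thm:conformalbubble_excat}. The $H(\div\div;\ST)$ conformity is then obtained directly from \Cref{prop:divdiv_conf}.

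For the dimension count, the four boundary degree-of-freedom families contribute $80+18(k-3)+2(k-1)(k-2)+2k(k+1)-12$; here the factor $18$ (instead of $24$) in the edge term reflects the symmetry $\mn_1^T\bsi\mn_2=\mn_2^T\bsi\mn_1$ that appears in \eqref{divdiv:2}. The effective number of independent functionals in \eqref{divdiv:5} is $\dim P_{k-2}(K)-\dim P_1^+$ because $\ker(\dev\hess)|_{P_{k-2}(K)}=P_1^+$, and the effective number in \eqref{divdiv:6} is $\dim\B_{k+1}^{(2,0)}(\sym\curl,K;\ST)-\dim\B_{k+3}^{(4,2,1)}(K)$ because \Cref{thm:conformalbubble_excat} identifies the kernel of $\sym\curl$ on $\B_{k+1}^{(2,0)}(\sym\curl,K;\ST)$ with $\dev\hess\B_{k+3}^{(4,2,1)}(K)$, a space of dimension $(k-2)(k-3)(k-4)/6$. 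Using the dimension $(5k^3-3k^2-2k-72)/6$ of $\B_{k+1}^{(2,0)}(\sym\curl,K;\ST)$ recorded in the proof of \Cref{thm:uni_symcurl}, a routine calculation sums these contributions to $5(k+1)(k+2)(k+3)/6=\dim P_k(K;\ST)$.

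Next, suppose all degrees of freedom vanish for some $\bsi\in P_k(K;\ST)$. The vanishing of \eqref{divdiv:1}--\eqref{divdiv:4} places $\bsi$ in the bubble space $\B_k^{(1)}(\div\div,K;\ST)$ by its very definition. Substituting $\bsi$ into Green's identity \eqref{eq:Green_divdiv} and using the bubble conditions to annihilate every boundary integral gives $(\bsi,\hess q)_K=(\div\div\bsi,q)_K$ for all $q\in P_{k-2}(K)$. Since $\Tr\bsi=0$ we have $(\bsi,\hess q)_K=(\bsi,\dev\hess q)_K$, and \eqref{divdiv:5} therefore forces $\div\div\bsi\perp P_{k-2}(K)$; since $\div\div\bsi\in P_{k-2}(K)$ this yields $\div\div\bsi=0$. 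By \Cref{thm:conformalbubble_excat} there exists $\bta\in\B_{k+1}^{(2,0)}(\sym\curl,K;\ST)$ with $\bsi=\sym\curl\bta$; testing \eqref{divdiv:6} against this particular $\bta$ gives $\|\bsi\|_{L^2(K)}^2=(\bsi,\sym\curl\bta)_K=0$, so $\bsi=0$.

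Finally, for the global conformity I would invoke \Cref{prop:divdiv_conf} and show that the single-valued degrees of freedom \eqref{divdiv:1}--\eqref{divdiv:4} force $\mn_i^T\bsi\mn_j|_e$, $\mn^T\bsi\mn|_F$, and $\tr_{\div\div}\bsi|_F$ to be single-valued across interior edges and faces. This reduces to three small polynomial unisolvency statements: a polynomial in $P_k(e)$ is determined by its two endpoint values, two endpoint tangential derivatives (both supplied by \eqref{divdiv:1}), and $k-3$ moments against $P_{k-4}(e)$ from \eqref{divdiv:2}; a polynomial in $P_k(F)$ by its vertex values and tangential first derivatives, its edge moments, and interior moments against $\B_k^{(1,0)}(F)$ from \eqref{divdiv:3}; and a polynomial in $P_{k-1}(F)$ by its three vertex values together with moments against $\B_{k-1}^{(0)}(F)$ from \eqref{divdiv:4}, the vertex values being computable from the first-order vertex derivatives of $\bsi$ in \eqref{divdiv:1}. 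Each is a one-line dimension count combined with the observation that a polynomial vanishing on the full sub-skeleton and orthogonal to the interior bubble is zero. The principal technical obstacle is the bookkeeping: correctly identifying the kernels $P_1^+$ and $\dev\hess\B_{k+3}^{(4,2,1)}(K)$ subtracted in \eqref{divdiv:5} and \eqref{divdiv:6}, together with the symmetry reduction in \eqref{divdiv:2}; once these are handled the proof runs in complete parallel with \Cref{lem:dim_divdiv}.
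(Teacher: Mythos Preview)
Your proposal is correct and follows essentially the same route as the paper: match the dimension count, then use the vanishing of \eqref{divdiv:1}--\eqref{divdiv:4} to land in $\B_k^{(1)}(\div\div,K;\ST)$, integrate by parts against \eqref{divdiv:5} to kill $\div\div\bsi$, and invoke the exactness of the conformal Hessian bubble complex (\Cref{thm:conformalbubble_excat}) so that \eqref{divdiv:6} finishes the job. Your explicit identification of the redundant test functions in \eqref{divdiv:5} and \eqref{divdiv:6} (the $P_1^+$ and $\dev\hess\B_{k+3}^{(4,2,1)}(K)$ kernels) and your conformity verification via \Cref{prop:divdiv_conf} are more detailed than what the paper writes out, but the argument is the same.
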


\begin{proof}

    The number of all degrees of freedom \eqref{divdiv:1}-\eqref{divdiv:6} is
    \begin{gather*}
        80+18(k-3)+2(k-1)(k-2)+2k(k+1)-12\\
        +\frac{(k+1)k(k-1)}{6}-5+\frac{2k^3+3k^2-14k-24}{3}=\dim  \;P_k(K;\mathbb{S}\cap\mathbb{T}).
    \end{gather*}
    
    It suffices to show if the degrees of freedom \eqref{divdiv:1}-\eqref{divdiv:6} vanish for some $\bsi\in P_k(K;\mathbb{S}\cap\mathbb{T})$, then $\bsi=0$. The degrees of freedom \eqref{divdiv:1}-\eqref{divdiv:4} show that $\bsi$ is in $\B_k^{(1)}(\div\div, K;\ST)$. An integration by parts and \eqref{divdiv:5} imply that $\div\div \bsi=0$. Hence, by the exactness of \eqref{dct_divdivST}, there exists $\bta \in \B_{k+1}^{(2,0)}(\sym \curl,K;\ST)$ such that $\bsi = \sym\curl \bta$. Then it follows from the degrees of freedom \eqref{divdiv:6} that $\bsi = 0$. This completes the proof. 
\end{proof}

\begin{remark}\label{rmk:dof_divdiv}
    The degrees of freedom \eqref{divdiv:3} and \eqref{divdiv:4} can be rewritten as:
    \begin{align*}
        (\mn^T\bsi\mn, q)_F &~~  \text{for all}\; q \in P_{k-3}(F), \, F\in \mf(K);\tag{\ref*{divdiv:3}'} \\
        (2\div_F(\bsi\mn)+\partial_{\mn}(\mn^T\bsi\mn), q)_{F} &~~  \text{for all}\; q \in \B_{k-1}^{(0)}(F)\cap \B_5^{(1,0)}(F)^{\perp}+P_2(F), \, F\in \mf(K). \tag{\ref*{divdiv:4}'}
    \end{align*}
    Here we replace the bubble function spaces $\B_{k}^{(1,0)}(F)$ and $\B_{k-1}^{(0)}(F)$ in \eqref{divdiv:3} and \eqref{divdiv:4} by their corresponding dual spaces. The modified degrees of freedom define a flux preserving canonical interpolation, which will be used to prove the exactness of the conformal Hessian finite element complex in \Cref{subsec:conformal_Hessian}. It is easy to prove that \Cref{thm:uni_divdiv} also holds for the modified degrees of freedom.
\end{remark}

\subsection{Finite element conformal Hessian complex}\label{subsec:conformal_Hessian}
Before establishing the finite element conformal Hessian complex \eqref{dct_cplx}, it remains to construct the rest two scalar-valued finite element spaces. 
Recall the $H^2$ conforming finite element space from~\cite{hu2023construction,Zhang2009,MR275014}. The shape function space is $P_{k+3}(K)$ with $k\geq 6$, and the degrees of freedom are defined by the following:
\begin{subequations}
\begin{align}
    D^{\alpha}u (x)&~~ \text{for all}\; x\in \mv(K), \, |\alpha| \leq 4; \label{devhess:1}\\
    (u, q)_e &~~ \text{for all}\; q \in P_{k-7}(e),\, e\in \me(K); \label{devhess:2}\\
    (\frac{\partial u}{\partial \mn_{i}}, q)_e &~~  \text{for all}\; q \in P_{k-6}(e), \, e\in \me(K),\, i=1,2;\label{devhess:3} \\
    (\frac{\partial^2 u}{\partial \mn_{i} \partial \mn_{j}}, q)_{e} &~~  \text{for all}\; q \in P_{k-5}(e), \, e\in \me(K),\, i,j=1,2; \label{devhess:4}\\
    (u, q)_{F}&~~  \text{for all}\; q \in P_{k-6}(F),\, F\in\mf(K); \label{devhess:5}\\
    (\frac{\partial u}{\partial \mn},q)_{F}&~~ \text{for all}\; q \in P_{k-4}(F),\, F\in\mf(K); \label{devhess:6}\\
    (u,q)_K &~~ \text{for all}\; q \in P_{k-5}(K). \label{devhess:7}
\end{align}
\end{subequations}
The $H^2$ conforming finite element space $U_{k+3,h}$ is then defined by
\begin{align*}
    U_{k+3,h}:=&\{u \in H^2(\Omega) \colon  u|_K \in P_{k+3}(K)\;\text{for all}\; K \in \mathcal{T}_h,\\ &\text{all of the degrees of freedom \eqref{devhess:1}-\eqref{devhess:6} are single-valued}\}.
\end{align*}

The last space $P_{k-2}(\mathcal{T}_h)$ in \eqref{dct_cplx} consists of discontinuous piecewise polynomials of degree no greater than $k-2$.

The following exactness result holds for the finite element conformal Hessian complex \eqref{dct_cplx}.
\begin{theorem}[Exactness of finite element complex] \label{thm:conformal_exact}
    Suppose that $k\geq 6$. The following sequence forms an exact complex:
    \begin{align*}
        P_{1}^{+} \xrightarrow[]{\subset}  U_{k+3,h} \xrightarrow[]{\dev\hess} \Lambda_{k+1,h} \xrightarrow[]{\sym\curl} \Sigma_{k,h} \xrightarrow[]{\div\div} P_{k-2}(\mathcal{T}_h)\xrightarrow[]{} 0,
    \end{align*}
    provided that $\mathcal T_h$ is a triangulation of a bounded Lipschitz contractible domain $\Omega$.
\end{theorem}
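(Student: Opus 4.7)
The plan is to prove exactness by reducing to the continuous complex \eqref{eq:complex3Dc} and the bubble complex \Cref{thm:conformalbubble_excat}, via a position-by-position argument that combines a trace-first construction on the mesh skeleton with interior bubble corrections.

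By \Cref{thm:uni_symcurl} and \Cref{thm:uni_divdiv}, the discrete spaces lie inside the corresponding continuous spaces, so \eqref{dct_cplx} is a subcomplex. Exactness at $U_{k+3,h}$ follows because $U_{k+3,h}\subset H^2(\Omega)$ and $\ker(\dev\hess|_{H^2(\Omega)})=P_1^+$. For the surjectivity $\div\div\Sigma_{k,h}=P_{k-2}(\mathcal{T}_h)$, I would take $q\in P_{k-2}(\mathcal{T}_h)$, use continuous exactness to find $\bsi\in H(\div\div,\Omega;\ST)$ with $\div\div\bsi=q$, and construct $\bsi_h\in\Sigma_{k,h}$ through the flux-preserving degrees of freedom of \Cref{rmk:dof_divdiv}. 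The choice of the modified test spaces in \Cref{rmk:dof_divdiv} is designed so that Green's identity \eqref{eq:Green_divdiv} yields $\int_K r\,\div\div\bsi_h=\int_K r\,\div\div\bsi$ for every $r\in P_{k-2}(K)$, whence $\div\div\bsi_h=q$.

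For exactness at $\Sigma_{k,h}$, take $\bsi\in\Sigma_{k,h}$ with $\div\div\bsi=0$; continuous exactness produces $\tilde\bta\in H(\sym\curl,\Omega;\ST)$ with $\sym\curl\tilde\bta=\bsi$. In Stage (i), I would choose the edge and face dofs \eqref{symcurl:1}--\eqref{symcurl:6} of some $\bta_h\in\Lambda_{k+1,h}$ so that the $H(\sym\curl;\ST)$ traces of $\bta_h$ coincide with those of $\tilde\bta$; by the commutative diagrams in \Cref{lem:face} and the exactness of the two-dimensional trace complexes \eqref{eq:complex2Da} and \eqref{eq:complex2Db}, the traces of $\sym\curl\bta_h$ then match those of $\bsi$. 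In Stage (ii), the residual $\bsi-\sym\curl\bta_h\in\Sigma_{k,h}$ has vanishing face and edge data and satisfies $\div\div(\bsi-\sym\curl\bta_h)=0$, so it lies elementwise in $\B_k^{(1)}(\div\div,K;\ST)\cap\ker(\div\div)$; by \Cref{thm:conformalbubble_excat} it equals $\sym\curl\bta_b$ for some $\bta_b$ with $\bta_b|_K\in\B_{k+1}^{(2,0)}(\sym\curl,K;\ST)$, and $\bta_h+\bta_b$ is the desired preimage in $\Lambda_{k+1,h}$. Exactness at $\Lambda_{k+1,h}$ is handled analogously, using the $H^2$ canonical interpolation from \eqref{devhess:1}--\eqref{devhess:7} to fix the skeleton data and then correcting by the bubble piece $\dev\hess\B_{k+3}^{(4,2,1)}(K)$ on each tetrahedron.

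The main obstacle is Stage (i): ensuring that the edge and face dofs of $\bta_h$ can be prescribed consistently across the mesh skeleton (compatibly at shared vertices and edges) and so that applying $\sym\curl$ reproduces the prescribed traces of $\bsi$. This feasibility rests on the exactness of the two-dimensional conformal strain and de Rham complexes constructed in \Cref{sec:trace}, applied face by face, together with the compatibility relations \eqref{eq:facetrace1} that are enforced by the $H(\div\div,\Omega;\ST)$ conformity of $\bsi$ and the condition $\div\div\bsi=0$. Once the skeleton data are fixed, the interior bubble correction in Stage (ii) is immediate from \Cref{thm:conformalbubble_excat}, and together with the surjectivity argument and the elementary analysis at $U_{k+3,h}$, this completes the proof.
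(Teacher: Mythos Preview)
Your proposal has a genuine regularity gap that undermines the interpolation steps. For surjectivity you take $\bsi\in H(\div\div,\Omega;\ST)$ and then apply the canonical degrees of freedom, but the vertex dofs \eqref{divdiv:1} and edge dofs \eqref{divdiv:2} are not defined on $H(\div\div)$ functions; the paper instead invokes the smooth version of the complex to obtain a preimage in $H^2(\Omega;\ST)$ and then uses Scott--Zhang averaging for the nodal values. Even with that fix, your claim that the modified dofs of \Cref{rmk:dof_divdiv} force $\int_K r\,\div\div\bsi_h=\int_K r\,q$ for \emph{every} $r\in P_{k-2}(K)$ is too strong: once the vertex values are only Scott--Zhang averages, the edge contributions in Green's identity \eqref{eq:Green_divdiv} do not cancel against arbitrary $r$. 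The paper only obtains $(q-\div\div I_h\bta,p)_K=0$ for $p\in P_1^{+}$, and then applies the bubble correction of \Cref{thm:conformalbubble_excat} elementwise. The same regularity problem is more severe in your Stage~(i) for exactness at $\Sigma_{k,h}$: the continuous preimage $\tilde\bta$ lives only in $H(\sym\curl,\Omega;\ST)$, so the dofs \eqref{symcurl:1}--\eqref{symcurl:4} cannot be evaluated, and no higher-regularity version of this step is available.

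The paper avoids these difficulties by taking a different route at the two middle positions. For $\Lambda_{k+1,h}\cap\ker(\sym\curl)$ it argues directly: the continuous preimage $u\in H^2(\Omega)$ of $\bta=\dev\hess u$ is shown to be piecewise polynomial (by observing that $\div\bta=\tfrac{2}{3}\grad\Delta u$ forces $\Delta u$ and hence $\hess u$ to be polynomial on each $K$), and then the $C^2$-at-vertices and $C^0$-on-edges continuity of $\bta$ is bootstrapped to the required $C^4$/$C^2$ continuity of $u$, placing $u\in U_{k+3,h}$ without any interpolation. For $\Sigma_{k,h}\cap\ker(\div\div)$ the paper does not construct a preimage at all: having established the other three positions, it closes by a dimension count using Euler's formula $\#\mv-\#\me+\#\mf-\#\mathcal{T}_h=1$. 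Your skeleton-plus-bubble strategy could perhaps be salvaged with substantial additional regularity and trace-matching arguments, but the paper's combination of a direct regularity lift at $\Lambda_{k+1,h}$ and an Euler count at $\Sigma_{k,h}$ is both shorter and free of the interpolation obstructions.
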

\begin{proof}
    Suppose $u\in U_{k+3,h}$ and $\bta=\dev\hess u$. It follows from \cite[Lemma 4.10]{Hu2022d} that on any $e\in\me$ the following identities hold:
    \begin{align*}
        \mn_i^T\sym\curl\bta\mn_j = 0,\;i,j=1,2,\\
        \mn_1^T\curl\bta\mn_2-\partial_{\mbt}(\mbt^T\bta\mbt) = -\partial_{\mbt}^3 u.
    \end{align*}
    This shows that the edge degrees of freedom \eqref{symcurl:3} and \eqref{symcurl:4} are single-valued for $\dev\hess u$ and hence $\dev\hess U_{k+3,h} \subset \Lambda_{k+1,h}$. It can be proved that the inclusion $\sym\curl \Lambda_{k+1,h} \subset \Sigma_{k,h}$ holds as well. Therefore, the sequence \eqref{dct_cplx} forms a complex. 
    
    It remains to prove the exactness, which is equivalent to the following statements:
    \begin{enumerate}
        \item $ U_{k+3,h} \cap \ker(\dev\hess)=P_{1}^{+}$,
        \item $\Lambda_{k+1,h} \cap\ker(\sym\curl) = \dev\hess U_{k+3,h}$,
        \item $P_{k-2}(\mathcal{T}_h) =\div\div\Sigma_{k,h}$,
        \item $\Sigma_{k,h}\cap \ker(\div\div) = \Lambda_{k+1,h}$.
    \end{enumerate}

   First, we show that 
   $ U_{k+3,h} \cap \ker(\dev\hess)=P_{1}^{+}$. By the exactness of the conformal Hessian complex \eqref{eq:complex3Dc},
    \begin{align}
        P_{1}^{+} \subset U_{k+3,h} \cap \ker(\dev\hess) \subset H^2(\Omega) \cap \ker(\dev\hess) =P_{1}^{+}.
    \end{align}

    Second, we show that $\Lambda_{k+1,h} \cap\ker(\sym\curl) = \dev\hess U_{k+3,h}$, namely, if $\sym\curl\bta=0$ and $\bta\in \Lambda_{k+1,h}$, then there exists $u\in U_{k+3,h}$, such that $\bta=\dev\hess u$.

    Since $\sym\curl\bta=0$, by the exactness of \eqref{eq:complex3Dc}, there exists $u\in H^2(\Omega)$ such that $\bta=\dev\hess u$. Fix any $K\in \mathcal{T}_h$, $\div\bta=\frac{2}{3}\grad(\Delta u)$ is a polynomial. Hence, $\Delta u$ is a polynomial and $\hess u=\dev\hess u+\frac{1}{3}\Delta u\mI$ is a polynomial. This leads to $u|_K\in P_{k+3}(K)$. It follows from  $u$ is piecewise smooth and globally $H^2$ that $u$ is $C^1$ at vertices, and on the edges and faces.
    
    Now it suffices to show that $u \in U_{k+3,h}$ by showing that $u$ has the corresponding continuity. Since $\bta$ is $C^2$ at the vertices, it then holds that $u$ is $C^4$ at the vertices. For $e\in\me$ with unit tangential vector $\mbt$, $\bta$ is continuous across $e$. Therefore, $\mbt^T\bta\mbt=\partial_{\mbt}^2 u-\frac{1}{3}\Delta u$ is continuous across $e$, which shows that $\Delta u$ is $C^0$ on the edges. Then $\hess u$ is $C^0$ on the edges and $u$ is $C^2$ on the edges. These imply $u\in U_{k+3,h}$ and consequently $\Lambda_{k+1,h} \cap\ker(\sym\curl) = \dev\hess U_{k+3,h}$.

    Third, we show that $P_{k-2}(\mathcal{T}_h) = \div\div\Sigma_{k,h}$, namely, if $q \in P_{k-2}(\mathcal{T}_h)$, then there exists $\bsi \in \Sigma_{k,h}$ s.t. $q=\div\div\bsi$.

    It follows from \cite[(52)]{Arnold2021} that $\div\div H^2(\Omega;\ST)=L^2(\Omega)$. That is, there exists $\bta \in H^2(\Omega;\ST)$ such that $
        \div\div\bta=q.
$ 
Let $I_h \bta \in\Sigma_{k,h}$ be determined by
$
        N(I_h \bta)=N(\bta)
$ 
    for all the degrees of  freedom $N$ from \eqref{divdiv:1} to \eqref{divdiv:6}, except that the nodal values in \eqref{divdiv:1} are determined by the Scott-Zhang interpolation~\cite{Scott1990}, and~\eqref{divdiv:3} and \eqref{divdiv:4} are modified as in \Cref{rmk:dof_divdiv}. Note that the degrees of freedom on the edges and faces are well-defined for $H^2$ functions.
    An integration by parts shows that
    \begin{align*}
        (q-\div\div I_h\bta, p)_K = (\bta-I_h\bta, \dev\hess p)_K = 0,\; \forall p\in P_{1}^{+},\, K\in\mathcal{T}_h.
    \end{align*}
    Hence $(q-\div\div I_h\bta)|_K\in P_{k-2}(K)\cap P_{1}^{+,\perp}$. By the exactness of the conformal Hessian bubble complex \eqref{dct_divdivST}, there exists $\bta_b\in \Sigma_{k,h}$ such that $\bta_b|_K\in \B_k^{(1)}(\div\div,K;\ST)$ for each $K\in \mathcal{T}_h$, and 
    \begin{equation*}
        q-\div\div I_h\bta=\div\div\bta_b.
    \end{equation*}
    Therefore $q=\div\div (I_h\bta+\bta_b)$, where $I_h\bta+\bta_b\in\Sigma_{k,h}$ as required.

   Finally, we show that $\Sigma_{k,h}\cap \ker(\div\div) = \Lambda_{k+1,h}$.

    We prove this identity by dimension count. By the degrees of freedom of the finite element spaces, we obtain 
    \begin{align*}
        \dim U_{k+3,h} = 35\#\mv + (6k-28)\#\me + (k^2-7k+13)\#\mf + \frac{k^3-9k^2+26k-24}{6}\#\mathcal{T}_h,\\
        \dim \Lambda_{k+1,h} = 50\#\mv + (9k-32)\#\me +(2k^2-8k+6)\#\mf + \frac{5k^3-3k^2-2k-72}{6}\#\mathcal{T}_h,\\
        \dim \Sigma_{k,h} = 20\#\mv + (3k-9)\#\me +(k^2-k-2)\#\mf + \frac{5k^3+6k^2-29k-78}{6}\#\mathcal{T}_h.
    \end{align*}
    By Euler's formula $\#\mv -\#\me +\#\mf -\#\mathcal{T}_h=1$, it holds that
    \begin{align*}
        \dim U_{k+3,h}-\dim \Lambda_{k+1,h}+\dim \Sigma_{k,h}-\dim  P_{k-2}(\mathcal{T}_h) = 5 =\dim P_{1}^{+}.
    \end{align*}
    This concludes that the discrete complex \eqref{dct_cplx} is exact.
\end{proof}

\section{Error estimates of the linearized Einstein-Bianchi system}\label{sec:application}
In this section, we show that the finite element complex \eqref{dct_divdivST} constructed in the previous sections can be used for the discretization of the linearized Einstein-Bianchi system \eqref{eq:EB}. We deal with the linear system \eqref{eq:EB} as the Hodge wave equation \eqref{eq:HodgeEB2} by smilar methods as in \cite{Hu2021a,Hu2022d}. 

Recall that the linearized Einstein-Bianchi system \eqref{eq:EB} can be realized as a Hodge wave equation \eqref{eq:HodgeEB2}
\begin{equation*}
\begin{aligned}
    \begin{cases}
        \dot{\sigma}=\div\div\bm{E},\\
        \dot{\bm{E}}=-\dev\hess\sigma-\sym\curl\bm{B},\\
        \dot{\bm{B}}=\sym\curl\bm{E}.
    \end{cases}
\end{aligned}
\end{equation*}
The weak formulation of the Hodge wave equation is proposed as follows: Find $\bm \sigma$, $\bm E$, $\bm B$ such that
\begin{equation}
    \begin{aligned}
        \sigma&\in C^1([0,T],L^2(\Omega;\mr)),\\
        \bm{E}&\in C^0([0,T],H(\div\div,\Omega;\ST)) \cap C^1([0,T],L^2(\Omega;\ST)),\\
        \bm{B}&\in C^0([0,T],H(\sym\curl,\Omega;\ST)) \cap C^1([0,T],L^2(\Omega;\ST)),
    \end{aligned}
\end{equation}
satisfying
\begin{equation}\label{eq:wkform1}
    \begin{aligned}
        \begin{cases}
            (\dot{\sigma},\tau)=(\div\div\bm{E},\tau), &\forall \tau\in L^2(\Omega;\mr),\\
            (\dot{\bm{E}},\bm{\xi})=-(\sigma,\div\div\bm{\xi})-(\sym\curl\bm{B},\bm{\xi}), &\forall \bm{\xi}\in H(\div\div,\Omega;\ST),\\
            (\dot{\bm{B}},\bm{\zeta})=(\bm{E},\sym\curl\bm{\zeta}), &\forall \bm{\zeta} \in H(\sym\curl,\Omega;\ST),
        \end{cases}
    \end{aligned}
\end{equation} 
with given initial data $(\sigma(0),\bm{E}(0),\bm{B}(0))\in L^2(\Omega;\mr)\times H(\div\div,\Omega;\ST)\times H(\sym\curl,\Omega;\ST)$.

For $k\geq 6$, the semidiscretization of \eqref{eq:wkform1} finds
\begin{equation*}
    \sigma_h\in C^1([0,T],P_{k-2}(\mathcal{T}_h)),\quad \bm E_h\in C^0([0,T],\Sigma_{k,h}),\quad\text{and}\quad\bm B_h\in C^0([0,T],\Lambda_{k+1,h}),
\end{equation*}
such that 
\begin{equation}\label{eq:semidctform1}
    \begin{aligned}
        \begin{cases}
            (\dot{\sigma_h},\tau)=(\div\div\bm{E}_h,\tau), &\forall \tau\in P_{k-2}(\mathcal{T}_h),\\
            (\dot{\bm{E}_h},\bm{\xi})=-(\sigma_h,\div\div\bm{\xi})-(\sym\curl\bm{B}_h,\bm{\xi}), &\forall \bm{\xi}\in \Sigma_{k,h},\\
            (\dot{\bm{B}_h},\bm{\zeta})=(\bm{E}_h,\sym\curl\bm{\zeta}), &\forall \bm{\zeta} \in\Lambda_{k+1,h},
        \end{cases}
    \end{aligned}
\end{equation}
for all $t\in(0,T]$ with given initial data. 

For simplicity of notation, let $\bm V:=L^2(\Omega;\mr)\times H(\div\div,\Omega;\ST)\times H(\sym\curl,\Omega;\ST)$ with the norm 
\begin{equation*}
    \|(q,\bm\xi,\bm\zeta)\|_{\bm V}:=\|q\|_{L^2(\Omega)}+\|\bm\xi\|_{H(\div\div,\Omega)}+\|\bm\zeta\|_{H(\sym\curl,\Omega)}
\end{equation*}
for $(q,\bm\xi,\bm\zeta)\in\bm V$. Let $\bm V_h:=P_{k-2}(\mathcal{T}_h)\times\Sigma_{k,h}\times\Lambda_{k+1,h}$ be the corresponding  discrete space. Based on the semidiscretization scheme \eqref{eq:semidctform1}, we shall use the usual Crank-Nicolson scheme to discretize the time variable. Suppose that $T=N\Delta t$ with a positive integer $N$. Let $p^j$ denote the function $p(t_j)$ with $t_j=j\Delta t$ for $j=0,1,\dots,N$. Define
\begin{equation*}
    \partial_t p^{j+\frac{1}{2}}=\frac{p^{j+1}-p^j}{\Delta t},\;\hat{p}^{j+\frac{1}{2}}=\frac{p^{j+1}+p^j}{2}.
\end{equation*}
Denote by $(\sigma_h^j,\bm E_h^j,\bm B_h^j)\in\bm V_h$ the approximation of solution $(\sigma,\bm E,\bm B)$ of \eqref{eq:wkform1} at $t_j$. Given the initial data $(\sigma_h^0,\bm E_h^0,\bm B_h^0)\in\bm V_h$, for $0\leq j\leq N-1$, the approximation $(\sigma_h^{j+1},\bm E_h^{j+1},\bm B_h^{j+1})$ at $t_{j+1}$ is defined by the following Crank-Nicolson scheme:
\begin{equation}\label{eq:dctform1}
    \begin{aligned}
        \begin{cases}
            (\partial_t\sigma_h^{j+\frac{1}{2}},\tau)=(\div\div\hat{\bm{E}}_h^{j+\frac{1}{2}},\tau), &\forall \tau\in P_{k-2}(\mathcal{T}_h),\\
            (\partial_t\bm{E}_h^{j+\frac{1}{2}},\bm{\xi})=-(\hat{\sigma}_h^{j+\frac{1}{2}},\div\div\bm{\xi})-(\sym\curl\hat{\bm{B}}_h^{j+\frac{1}{2}},\bm{\xi}), &\forall \bm{\xi}\in \Sigma_{k,h},\\
            (\partial_t\bm{B}_h^{j+\frac{1}{2}},\bm{\zeta})=(\hat{\bm{E}}_h^{j+\frac{1}{2}},\sym\curl\bm{\zeta}), &\forall \bm{\zeta} \in\Lambda_{k+1,h}.
        \end{cases}
    \end{aligned}
\end{equation}
This can be written as
\begin{align*}
\begin{cases}
(\sigma_h^{j+1},q)-\frac{\Delta t}{2}(\div\div\bm E_h^{j+1},q)\\\quad\quad=(\sigma_h^{j},q)+\frac{\Delta t}{2}(\div\div\bm E_h^{j},q),&\forall q\in P_{k-2}(\mathcal{T}_h),\\
(\bm E_h^{j+1} ,\bm\xi)+\frac{\Delta t}{2}(\sigma_h^{j+1} ,\div\div\bm\xi)+\frac{\Delta t}{2}(\sym\curl \bm B_h^{j+1},\bm\xi)\\
\quad\quad=(\bm E_h^{j } ,\bm\xi)-\frac{\Delta t}{2}(\sigma_h^{j} ,\div\div\bm\xi)-\frac{\Delta t}{2}(\sym\curl \bm B_h^{j},\bm\xi),&\forall \bm\xi\in \Sigma_{k,h},\\
( \bm B_h^{j+1} ,\bm\zeta)-\frac{\Delta t}{2}( \bm E_h^{j+1},\sym\curl\bm\zeta)\\\quad\quad=( \bm B_h^{j } ,\bm\zeta)+\frac{\Delta t}{2}( \bm E_h^{j },\sym\curl\bm\zeta),&\forall\bm\zeta\in \Lambda_{k+1,h}.
\end{cases}
\end{align*}
The system is nonsingular as in \cite{Hu2021a}. 

To determine the initial data $(\sigma_h^0,\bm E_h^0,\bm B_h^0)$, we further introduce the following bilinear form $\mathcal{A}$ and the projection operator $\Pi_h$. Define the bilinear form $\mathcal{A}:\bm V\times\bm V\xrightarrow[]{} \mr$ by
\begin{align*}
    \mathcal{A}(\sigma,\bm E,\bm B;q,\bm\xi,\bm\zeta):=&(\sigma,q)+(\bm E,\bm\xi)+(\bm B,\bm\zeta)-(\div\div\bm E,q)+(\sigma,\div\div\bm\xi)\\&+(\sym\curl\bm B,\bm\xi)-(\bm E,\sym\curl\bm\zeta).
\end{align*}
By a similar argument as in \cite[Theorem 5.2]{Hu2022d}, we can prove that the inf-sup condition of $\mathcal{A}$ in $\bm V_h\times\bm V_h$ holds true:
\begin{equation*}
    \inf_{0\neq(\sigma,\bm E,\bm B)\in\bm V_h}\sup_{0\neq(q,\bm\xi,\bm\zeta)\in\bm V_h}\frac{\mathcal{A}(\sigma,\bm E,\bm B;q,\bm\xi,\bm\zeta)}{\|(\sigma,\bm E,\bm B)\|_{\bm V}\|(q,\bm\xi,\bm\zeta)\|_{\bm V}}=\beta>0
\end{equation*}
with constant $\beta$ independent of $h$. Hence, for any $(\sigma,\bm E,\bm B)\in \bm V$, we can define the elliptic projection $\Pi_h(\sigma,\bm E,\bm B)\in\bm V_h$ such that
\begin{equation}
    \mathcal{A}(\Pi_h\sigma,\Pi_h\bm E,\Pi_h\bm B;q,\bm\xi,\bm\zeta)=\mathcal{A}(\sigma,\bm E,\bm B;q,\bm\xi,\bm\zeta)\; \text{for any}\; (q,\bm\xi,\bm\zeta)\in\bm V_h.
\end{equation}
Then the discrete initial value $(\sigma_h^0,\bm E_h^0,\bm B_h^0)$ can be taken as $\Pi_h(\sigma(0),\bm E(0),\bm B(0))$, where $(\sigma,\bm E,\bm B)$ are the exact solutions of \eqref{eq:wkform1}. The error estimates are stated in the following proposition and the proof, following the same argument as in \cite[Theorem 5.3]{Hu2022d}, is omitted here.

\begin{prop}
    Suppose $k\geq 6$.  Let $(\sigma,\bm E,\bm B)$ solve \eqref{eq:wkform1}, let $(\sigma_h^j,\bm E_h^j,\bm B_h^j)$ solve \eqref{eq:dctform1}, and let the initial data $(\sigma_h^0,\bm E_h^0,\bm B_h^0)=\Pi_h(\sigma(0),\bm E(0),\bm B(0))$. Assume
    \begin{gather*}
        \sigma\in W^{1,1}\big([0,T], H^{k-1}(\Omega) \big)\cap W^{3,1}\big([0,T],L^2(\Omega)\big)\cap L^\infty\big([0,T],H^{k-1}(\Omega)\big),\\
        \bm E\in W^{1,1}\big([0,T], H^{k+1}(\Omega;\ST)\big)\cap W^{3,1}\big([0,T],L^2(\Omega;\ST)\big)\cap L^\infty\big([0,T],H^{k+1}(\Omega;\ST)\big),\\
        \bm B\in W^{1,1}\big([0,T], H^{k}(\Omega;\ST) \big)\cap W^{3,1}\big([0,T],L^2(\Omega;\ST)\big)\cap L^\infty\big([0,T],H^{k}(\Omega;\ST)\big).        
    \end{gather*}  
    For $1\leq j\leq N$,
    \begin{align*}
        &\|\sigma^j-\sigma_h^j\|_{L^2(\Omega)}+\|\bm E^j-\bm E_h^j\|_{L^2(\Omega)}+\|\bm B^j-\bm B_h^j\|_{L^2(\Omega)}\\
        \lesssim&(h^{k-1}+\Delta t^2)\big(\|{\sigma}\|_{W^{1,1}(H^{k-1})\cap W^{3,1}(L^2)\cap L^\infty(H^{k-1})}\\&+\|{\bm E}\|_{W^{1,1}(H^{k+1})\cap W^{3,1}(L^2)\cap L^\infty(H^{k+1})}\\
        &+\|{\bm B}\|_{W^{1,1}(H^{k })\cap W^{3,1}(L^2)\cap L^\infty(H^{k })}\big).
    \end{align*}
\end{prop}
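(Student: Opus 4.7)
The plan is to follow the standard two-step error-splitting strategy that the paper itself points to in \cite[Theorem 5.3]{Hu2022d}. Write
\begin{equation*}
    \sigma^j - \sigma_h^j = (\sigma^j - \Pi_h \sigma^j) + (\Pi_h \sigma^j - \sigma_h^j) =: \rho_\sigma^j + \theta_\sigma^j,
\end{equation*}
and analogously $\bm E^j - \bm E_h^j = \rho_{\bm E}^j + \theta_{\bm E}^j$, $\bm B^j - \bm B_h^j = \rho_{\bm B}^j + \theta_{\bm B}^j$, where $\Pi_h$ is the elliptic projection defined via the bilinear form $\mathcal A$. The projection errors $\rho_\sigma^j, \rho_{\bm E}^j, \rho_{\bm B}^j$ are handled by an abstract Brezzi-type argument: the inf-sup condition for $\mathcal A$ on $\bm V_h \times \bm V_h$ (already stated in the excerpt) combined with the approximation properties of the canonical interpolants built from the finite element complex \eqref{dct_cplx} yields
\begin{equation*}
    \|\rho_\sigma\|_{L^2} + \|\rho_{\bm E}\|_{L^2} + \|\rho_{\bm B}\|_{L^2} \lesssim h^{k-1}\bigl(\|\sigma\|_{H^{k-1}} + \|\bm E\|_{H^{k+1}} + \|\bm B\|_{H^k}\bigr),
\end{equation*}
with the same bound for the time derivatives. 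Here the exactness of \eqref{dct_cplx} and the commuting projections (edge/face/interior degrees of freedom from Sections \ref{sec:trace} and \ref{subsec:symcurl}-\ref{subsec:divdiv}) ensure that $\Pi_h$ commutes compatibly with $\dev\hess$, $\sym\curl$, and $\div\div$ in the discrete complex, which is what makes the standard argument go through.

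The discrete error $(\theta_\sigma^j,\theta_{\bm E}^j,\theta_{\bm B}^j) \in \bm V_h$ is controlled by an energy argument on the defect equation. Subtracting \eqref{eq:semidctform1} from \eqref{eq:wkform1} (after testing with discrete functions) and using the defining property of $\Pi_h$, one obtains a discrete system of the same form as \eqref{eq:dctform1} but with right-hand sides given by consistency residuals of the form $\partial_t \rho^{j+1/2}$ and Crank--Nicolson truncation terms $\partial_t p^{j+1/2} - \dot p(t_{j+1/2})$ and $\hat p^{j+1/2}-p(t_{j+1/2})$. Testing the three equations respectively by $\hat\theta_\sigma^{j+1/2}$, $\hat\theta_{\bm E}^{j+1/2}$, $\hat\theta_{\bm B}^{j+1/2}$ and summing exploits the skew-symmetry of the Hodge wave system: all cross-terms involving $\div\div$ and $\sym\curl$ cancel, leaving a discrete energy identity
\begin{equation*}
    \tfrac{1}{2}\partial_t\bigl(\|\theta_\sigma^{j+1/2}\|^2 + \|\theta_{\bm E}^{j+1/2}\|^2 + \|\theta_{\bm B}^{j+1/2}\|^2\bigr) = \text{(consistency residuals)}.
\end{equation*}
Bounding the residuals with Taylor expansion around $t_{j+1/2}$ gives an $O(\Delta t^2)$ contribution involving $\|\sigma\|_{W^{3,1}(L^2)}$, $\|\bm E\|_{W^{3,1}(L^2)}$, $\|\bm B\|_{W^{3,1}(L^2)}$, while the projection-consistency terms give $O(h^{k-1})$ times $\|\partial_t \sigma\|_{H^{k-1}} + \|\partial_t\bm E\|_{H^{k+1}} + \|\partial_t\bm B\|_{H^k}$, which explains the $W^{1,1}$-norms in the hypothesis. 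A discrete Gronwall-type summation from $j=0$ up to $j=N$ then yields the stated bound on $\|\theta^j\|_{L^2}$.

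The main obstacle, in my view, is not the time-stepping analysis (which is classical once skew-symmetry is identified) but rather verifying the commuting-projection and approximation properties in the form needed on the right-hand side. The spaces $\Sigma_{k,h}$ and $\Lambda_{k+1,h}$ have nonstandard degrees of freedom involving $\tr_{\div\div}$ and the trace complexes of Section \ref{sec:cplx}; one must check that a quasi-interpolant combining Scott--Zhang smoothing at vertices/edges/faces (as hinted at in the proof of Theorem \ref{thm:conformal_exact}) with the face and interior moments \eqref{divdiv:3}--\eqref{divdiv:6} and \eqref{symcurl:5}--\eqref{symcurl:7} is well-defined on the regularity classes $H^{k+1}(\Omega;\ST)$ and $H^k(\Omega;\ST)$, commutes with the differential operators modulo bubble corrections (using the exactness of the bubble complex \eqref{dct_divdivST}), and yields optimal $h^{k-1}$ approximation in the $L^2$ norm. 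Once these interpolation estimates are granted—exactly the analogues of \cite[\S5]{Hu2022d}—combining them with the energy argument concludes the proof.
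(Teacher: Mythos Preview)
Your proposal is correct and follows essentially the same approach as the paper: the paper explicitly omits the proof and states that it follows the same argument as \cite[Theorem~5.3]{Hu2022d}, which is precisely the elliptic-projection/energy-argument strategy you outline. Your identification of the key ingredients---the inf-sup stability of $\mathcal A$ on $\bm V_h\times\bm V_h$, the skew-symmetric structure that cancels the cross terms in the energy identity, and the Crank--Nicolson truncation analysis---matches what that reference carries out.
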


\begin{remark}
We can also consider the dual formulation: find
    \begin{equation}
        \begin{aligned}
            \sigma&\in C^0([0,T],H^2(\Omega;\mr)) \cap C^1([0,T],L^2(\Omega;\mr)),\\
            \bm{E}&\in C^0([0,T],H(\sym\curl,\Omega;\ST)) \cap C^1([0,T],L^2(\Omega;\ST)),\\
            \bm{B}&\in C^1([0,T],L^2(\Omega;\ST)),
        \end{aligned}
    \end{equation}
    such that
    \begin{equation}\label{eq:wkform2}
        \begin{aligned}
            \begin{cases}
                (\dot{\sigma},\tau)=(\bm{E},\dev\hess\tau), &\forall \tau\in H^2(\Omega;\mr),\\
                (\dot{\bm{E}},\bm{\xi})=-(\dev\hess\sigma,\bm{\xi})-(\bm{B},\sym\curl\bm{\xi}), &\forall \bm{\xi}\in H(\sym\curl,\Omega;\ST),\\
                (\dot{\bm{B}},\bm{\zeta})=(\sym\curl\bm{E},\bm{\zeta}), &\forall \bm{\zeta} \in L^2(\Omega;\ST),
            \end{cases}
        \end{aligned}
    \end{equation}
with given initial data $(\sigma(0),\bm{E}(0),\bm{B}(0))\in H^2(\Omega;\mr)\times H(\sym\curl,\Omega;\ST)\times L^2(\Omega;\ST)$. The weak formulations \eqref{eq:wkform1} and \eqref{eq:wkform2} are both related to the conformal Hessian complex \eqref{eq:complex3Dc}. Here we only state the discretization scheme and error estimates for \eqref{eq:wkform1}, and the dual formulation \eqref{eq:wkform2} can be solved by similar methods. 
\end{remark}

\nocite{Guo2025a}

\bibliographystyle{plain}
\bibliography{reference}
\end{document}